\documentclass[10pt]{amsart}
\usepackage{array, amsmath, enumerate, color}
\usepackage{amssymb}
\usepackage{mathrsfs}
\usepackage{graphicx,subfigure}

\iffalse

\usepackage[inner]{showlabels,rotating}   % final, inline, left, right,

\fi

\makeatletter
%%%%%%%%%%%%%%%%%%%%%%%%%%%%%% Textclass specific LaTeX commands.
 %\usepackage{fullpage} % reduces the margins to 1 inch
 \usepackage{amsthm}
%\theoremstyle{plain}
%%%%%%%%%%%%%%%%%%%%%%%%%%%%%% User specified LaTeX commands.
           %line-spacing

\usepackage{fullpage}

\makeatother

%\begin{document}

\newtheorem{case}{Case}
\newtheorem{subcase}{Subcase}[case]

\newtheorem{thm}{Theorem}[section]
\newtheorem{Lemma}[thm]{Lemma}
\newtheorem{lemma}[thm]{Lemma}

\newtheorem{conj}[thm]{Conjecture}

\newcommand{\sU}{\mathcal{U}}
\newcommand{\sX}{\mathcal{X}}
\newcommand{\sY}{\mathcal{Y}}
\newcommand{\sZ}{\mathcal{Z}}
\newcommand{\sS}{\mathcal{S}}
\newcommand{\sT}{\mathcal{T}}
\newcommand{\sA}{\mathcal{A}}

\title{Strong chromatic index of graphs with maximum degree four}

\author[Huang]{Mingfang Huang}
\thanks{The first author's research is supported by the Fundamental Research Funds for the Central Universities(WUT: 2018IA003,2017IB014).}
\address{Department of Mathematics, School of Science,
Wuhan University of Technology,
Wuhan, China 430070}

\author[Santana]{Michael Santana}
\address{Department of Mathematics,
Grand Valley State University,
Allendale, Michigan 49401}

\author[YU]{Gexin Yu}
\thanks{The third author's research is supported in part by the NSA grant  H98230-16-1-0316 and NSFC grant (11728102).}
\address{Department of Mathematics,
The College of William and Mary,
Williamsburg, VA 23185}
\email{gyu@wm.edu}

\subjclass{05C15}
\keywords{strong edge-coloring, induced matching}
\date{\today}

\begin{document}

\begin{abstract}
A strong edge-coloring of a graph $G$ is a coloring of the edges such that every color class induces a matching in $G$.  The strong chromatic index of a graph is the minimum number of colors needed in a strong edge-coloring of the graph.  In 1985, Erd\H{o}s and Ne\v{s}et\v{r}il conjectured that every graph with maximum degree $\Delta$ has a strong edge-coloring using at most $\frac{5}{4}\Delta^2$ colors if $\Delta$ is even, and at most $\frac{5}{4}\Delta^2 - \frac{1}{2}\Delta + \frac{1}{4}$ if $\Delta$ is odd.  Despite recent progress for large $\Delta$ by using an iterative probabilistic argument, the only nontrivial case of the conjecture that has been verified is when $\Delta = 3$, leaving the need for new approaches to verify the conjecture for any $\Delta\ge 4$. In this paper, we apply some ideas used in previous results to an upper bound of 21 for graphs with maximum degree 4, which improves a previous bound due to Cranston in 2006 and moves closer to the conjectured upper bound of 20.

\end{abstract}
\maketitle

\section{ introduction}

All graphs considered in this paper are finite, loopless, undirected, and may have multiple edges.  For a graph $G$, we use $V(G)$ and $E(G)$ to denote the set of vertices and edges of $G$, respectively, and we use $\Delta(G)$ to denote the maximum degree of $G$.  First introduced by Fouquet and Jolivet \cite{[FJ]}, a \emph{strong edge-coloring} of a graph $G$ is an assignment of colors to the edges of $G$ such that if edges $e_1$ and $e_2$ receive the same color, they cannot be incident with one another nor can they be incident with a common edge.  Thus, every color class in a strong edge-coloring induces a matching in $G$. The \emph{strong chromatic index} of a graph $G$, denoted by $\chi'_s(G)$, is the minimum number of colors necessary for a strong edge-coloring of $G$.  Observe that the strong chromatic index of $G$ is equivalent to the chromatic number of $L^2(G)$, which is the square of the line graph of $G$.

Via the greedy algorithm, we see that  $\chi_s'(G) \leq 2 \Delta ^2 - 2 \Delta +1$ for every graph $G$ with maximum degree $\Delta$. In 1985, Erd\H{o}s and Ne\v{s}et\v{r}il \cite{[EN]} conjectured the following upper bounds:

\begin{conj} {\rm (Erd\H{o}s and Ne\v{s}et\v{r}il \cite{[EN]}) } \label{conj:any-g} For every graph $G$ with maximum degree $\Delta$,

\begin{equation*}
 \chi_s'(G) \leq
\begin{cases}
 \frac{5}{4} \Delta^2 & \text{if } \Delta \text{ is even},\\
\frac{5}{4}\Delta^2 - \frac{1}{2}\Delta + \frac{1}{4} & \text{if } \Delta \text{ is odd}.
\end{cases}
\end{equation*}
\end{conj}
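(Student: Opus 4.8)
The plan is to attack the conjecture on the two fronts the literature has opened --- small $\Delta$ by structural discharging methods and large $\Delta$ by the probabilistic method --- and to unify them through the unique extremal family. The tight examples are the blow-ups $C_5[\Delta]$ of the $5$-cycle: replace each vertex of $C_5$ by an independent set of $\lfloor\Delta/2\rfloor$ or $\lceil\Delta/2\rceil$ vertices and join consecutive sets completely. Because any two edges of $C_5$ lie within distance two in its line graph, the same holds for every blow-up, so all edges of $C_5[\Delta]$ are pairwise within distance two and each needs its own colour; counting edges gives exactly $\frac54\Delta^2$ when $\Delta$ is even, and the analogous near-balanced blow-up gives $\frac54\Delta^2-\frac12\Delta+\frac14$ when $\Delta$ is odd. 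The organizing principle is that these are conjecturally the \emph{only} extremal graphs, so I would aim for a stability dichotomy: a graph either locally resembles $C_5[\Delta]$, in which case an explicit colouring meets the bound, or is quantifiably far from it, in which case averaging or probabilistic slack beats $\frac54\Delta^2$.

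Step one reduces to a well-behaved minimal counterexample $G$ that is connected, edge-minimal, and (after standard reductions) essentially $\Delta$-regular: vertices of small degree together with their neighbourhoods can be deleted, strongly coloured by minimality, and reinserted greedily using the slack $2\Delta^2-2\Delta+1-\frac54\Delta^2$ left by the trivial bound. The cases $\Delta\le 3$ already hold (the nontrivial case $\Delta=3$ by Andersen and by Hor\'ak, Qing and Trotter), and for each fixed $\Delta$ one runs a discharging argument: compile a list of reducible configurations (short cycles, adjacent low-degree vertices, sparse or dense second neighbourhoods), show a minimal counterexample avoids all of them, and assign charges so that their absence forces a contradiction with the edge count of any graph that is not a $C_5[\Delta]$. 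This is exactly the engine behind the bound of $21$ for $\Delta=4$ in this paper; the first concrete goals are to sharpen that to the conjectured $20$ and then to make the configuration list and discharging rules \emph{uniform} in $\Delta$ rather than rebuilt case by case.

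Step two, for large $\Delta$, is the iterative semi-random colouring method. One colours $L^2(G)$ by repeatedly giving each edge a uniform tentative colour from a palette of size $c\Delta^2$, retaining conflict-free colours and iterating on the remainder, with a Lov\'asz Local Lemma analysis controlling the number of rounds and colours. As the abstract notes, this route presently yields constants $c$ near $1.8$, and only down to about $1.77$ after recent refinements, well above $\frac54$, because it treats the distance-two neighbourhood of an edge as essentially random and so cannot exploit a structured neighbourhood. The plan is to feed the stability information from step one into this analysis: where the local structure is far from the $C_5[\Delta]$ pattern the codegrees in $L^2(G)$ are below worst case, the dependencies weaken, and the local-lemma computation should close below $\frac54\Delta^2$, while near the pattern the explicit colouring of $C_5[\Delta]$, together with a perturbation argument around it, finishes the job.

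The hard part --- and the reason the conjecture has resisted since 1985 with only $\Delta=3$ fully settled --- is precisely the middle of this dichotomy and the gap between $\frac54$ and the probabilistic barrier near $1.77$. Making ``locally resembles $C_5[\Delta]$'' quantitative enough either to trigger the explicit colouring or to release genuine slack appears to demand a stability theorem for the strong chromatic index that does not yet exist; the probabilistic method alone provably cannot reach $\frac54\Delta^2$, and the discharging method, though workable in principle for each fixed $\Delta$, produces configuration lists and charge computations that grow uncontrollably with $\Delta$. I therefore expect the decisive obstacle to be the construction of that quantitative stability statement; absent it, progress will stay incremental, lowering individual small cases $\Delta=4,5,6$ to their conjectured values as this paper does for $\Delta=4$ and slowly shrinking the large-$\Delta$ constant, rather than a single uniform argument settling every $\Delta$ at once.
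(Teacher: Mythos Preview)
The statement is a \emph{conjecture}, not a theorem; the paper does not prove it, and neither does your proposal. What you have written is a research programme --- discharging for small $\Delta$, semi-random colouring for large $\Delta$, bridged by a hypothetical stability theorem around $C_5[\Delta]$ --- and you say so yourself in the final paragraph: the stability statement ``does not yet exist'', the probabilistic method ``provably cannot reach $\frac54\Delta^2$'', and discharging ``grow[s] uncontrollably with $\Delta$''. A proof that concludes by listing the reasons it cannot be completed is not a proof; it is a status report on an open problem, which is all that is currently possible here.

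Two factual points about your description of the paper. First, the engine behind the bound of $21$ for $\Delta=4$ is \emph{not} discharging with a list of reducible configurations. The paper builds a vertex partition $V(G)=L\cup M\cup R$ around a fixed vertex $x$, with no edges between $L$ and $R$ and $M$ contained in the ball of radius two around $x$; it colours $G[L]$ and $G[R]$ independently by minimality, synchronises a few colour names, and then threads the remaining edges near $M$ through an explicit ordering. The girth-at-least-six lemma is proved by ad hoc SDR/Hall arguments, not discharging. Second, the paper does \emph{not} reach the conjectured value $20$ for $\Delta=4$; it proves $21$, improving Cranston's $22$. Your closing sentence (``lowering individual small cases $\Delta=4,5,6$ to their conjectured values as this paper does for $\Delta=4$'') misstates the result.
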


Erd\H{o}s and Ne\v{s}et\v{r}il showed further that this conjecture, if true, is best possible by constructing a particular blow-up of $C_5$.  It is worth noting that if a graph $G$ is $2K_2$-free, then $\chi'_s(G) = |E(G)|$.  In 1990, Chung, Gy\'{a}rf\'{a}s, Trotter, and Tuza \cite{CGTT} showed that the maximum number of edges in a $2K_2$-free graph with maximum degree $\Delta$ is $\frac{5}{4}\Delta^2$ for even $\Delta$, and $\frac{5}{4}\Delta^2 - \frac{1}{2} + \frac{1}{4}$ for odd $\Delta$; furthermore, the aforementioned blow-up of $C_5$ is the unique graph that attains this maximum.

While Conjecture \ref{conj:any-g} has been the impetus for many other conjectures and results in the area of strong edge-colorings (see \cite{BI, BQ, FGST, HMRV, HV, HYZ, KLRSWY, SY, WL, Yu}  for only a few), not much progress has been made in regards to proving this conjecture directly.    The first nontrivial case of Conjecture \ref{conj:any-g} (i.e., for graphs with maximum degree at most three) was verified by Andersen \cite{[Andersen]} and independently by Hor\'{a}k, Qing, and Trotter \cite{[HQT]}.  For graphs with maximum degree at most four, Hor\'{a}k \cite{[Ho]} first proved an upper bound of 23 in 1990.  This was later improved by Cranston \cite{[Cr]} in 2006, who showed that 22 colors suffice, which is $2$ away from the conjectured bound $20$.

For graphs with large enough $\Delta$, exciting progress has been made.  In 1997, Molloy and Reed \cite{MR} showed that such a graph $G$ has $\chi'_s(G) \le 1.998\Delta^2$.  In 2015, Bruhn and Joos \cite{BJ} improved this bound to $1.93\Delta^2$. Very recently, Bonamy, Perrett, and Postle \cite{BPP} improved it to $1.835\Delta^2$.    All of these proofs considered the coloring of $L^2(G)$, in which each vertex has a sparse neighborhood (with at most $0.75{2\Delta^2\choose 2}$ edges), and then used an iterative coloring procedure.  However, as pointed out in \cite{MR},  this method cannot get a bound better than $1.75\Delta^2$.   Therefore, it is necessary to explore new approaches and ideas to attack the conjecture.

We turn to the first unsolved case, $\Delta=4$. We develop some ideas hidden in \cite{[Andersen]} by Andersen and prove the following.

\begin{thm}
\label{thm:main}
For every graph $G$ with maximum degree four, $\chi_s'(G)\leq21$.
\end{thm}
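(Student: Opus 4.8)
The plan is to argue by contradiction: let $G$ be a minimum counterexample, so $G$ is edge-minimal with $\Delta(G)\le 4$ and $\chi'_s(G)\ge 22$. By minimality every proper subgraph of $G$ admits a strong edge-coloring with $21$ colors; I would first record the usual consequences of this, namely that $G$ is connected, $2$-edge-connected (a cut edge could be colored last), and in fact that every edge $uv$ lies in a ``dense'' local configuration — otherwise one could delete $uv$, color $G-uv$, and show that at most $20$ colors are forbidden at $uv$. Concretely, for an edge $uv$ the set of edges at strong-distance at most one from $uv$ has size at most $2(\Delta-1)+2(\Delta-1)(\Delta-1)$ ... (after subtracting shared incidences), so when $\Delta=4$ a vertex of degree $\le 3$ at either end, or a short cycle through $uv$, already forces enough collisions to complete the coloring. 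The first block of the proof is thus a list of reducible configurations: low-degree vertices adjacent to low-degree vertices, triangles and $4$-cycles with prescribed degrees, and ``separated'' pairs of edges whose closed strong-neighborhoods overlap.

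Next I would set up the discharging (or direct counting) argument on the reduced structure. After the reductions, $G$ is essentially $4$-regular with large girth-like separation between its low-degree vertices, and near every edge $uv$ the $21$ colors are all used on nearby edges. The key idea, ``hidden in Andersen,'' is to exploit this saturation: around a carefully chosen edge $uv$ whose neighborhood is completely colored except for $uv$, count the colors appearing on the (at most) six edges incident to $u$ or $v$ other than $uv$, and on the (at most) twelve edges at distance two; since all $21$ colors occur there, many colors must repeat, and each repetition pins down a monochromatic induced matching edge far from $uv$. Tracking where these repeated colors can live forces two such edges to be strong-adjacent, contradicting that we started from a valid coloring — or it produces one of the reducible configurations already excluded. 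This amounts to a discharging scheme where $4$-vertices start with charge $0$ and borrow from the (few) low-degree vertices and from the structural ``savings'' created when colors are forced to repeat.

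The main obstacle, and where the bulk of the work will go, is the middle range: handling $4$-regular graphs in which the only flexibility comes from how colors repeat at distance two. Here one cannot simply delete an edge; instead I expect to need a recoloring argument — delete $uv$, take a $21$-coloring of $G-uv$, and if $uv$ cannot be colored, perform Kempe-type swaps on two color classes in the strong-distance-$2$ neighborhood to free a color. The delicate part is that a strong-edge-coloring Kempe chain lives in $L^2(G)$, whose structure near $uv$ can be complicated; one must show that for $\Delta=4$ the relevant chains are short or confined, so a swap does not propagate uncontrollably. I would organize this as a case analysis on the ``color pattern'' around $uv$ (which of the $\le 18$ nearby edges share colors with which), reducing each pattern either to an immediate recoloring or to an excluded configuration. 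Bounding the number of patterns and verifying each is the routine-but-lengthy core; the conceptual content is the observation that saturation of all $21$ colors near an uncolorable edge is itself a strong structural constraint.
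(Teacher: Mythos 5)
Your proposal is a plan rather than a proof, and the plan has a concrete gap at its core. The local saturation argument you build around a single uncolorable edge $uv$ is too weak on its own: in a $4$-regular graph the set $N(uv)$ of edges within strong distance one contains up to $6+18=24$ edges, so the fact that all $21$ colors appear there forces only three color repetitions among $24$ edges --- nowhere near enough to ``pin down a monochromatic induced matching edge'' or to force a reducible configuration. Likewise, the discharging framing collapses exactly where you need it: after the reductions $G$ is $4$-regular with girth at least six, so there are no low-degree vertices or short cycles left to borrow charge from, and every vertex has charge $0$ with nothing to discharge. The Kempe-chain fallback you describe is the genuinely hard part, and you acknowledge but do not supply the key fact it would need (that chains in $L^2(G)$ are ``short or confined''); there is no reason to expect this for $\Delta=4$, and the paper does not use such an argument.

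What the paper actually does, and what is missing from your sketch, is a \emph{global} decomposition rather than a local recoloring. Starting from a vertex $x$ in the (necessarily $4$-regular, girth-$\ge 6$, $4$-edge-connected) minimum counterexample, it precolors a few edges near $x$, grows a maximal sequence $S$ of edges each of which sees at least four later edges of $S$ (so $S$ can be colored greedily in reverse), and shows that the uncolored edges outside $S$ span a set $L$ separated from the rest $R$ by a small, highly structured edge set $F$ meeting a middle layer $M$ of vertices within distance two of $x$. Since no edge of $G[L]$ sees an edge of $G[R]$, the two sides can be colored independently by minimality; the crucial trick is to color them \emph{collaboratively} by adding auxiliary edges or vertices to $G[L]$ and $G[R]$ so that prescribed colors coincide across the cut, which creates exactly the repetitions needed to finish the $\le 16$ edges around $M$ in a carefully chosen order. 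This partition-plus-auxiliary-edge mechanism is the idea ``hidden in Andersen'' that the abstract alludes to; without it, your outline does not close.
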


The idea of the proof is as follows.  For a minimum counterexample $G$, we construct a partition $V(G) = L \cup M \cup R$ such that:

(1) For any $u\in L$ and $v\in R$ ,   the distance between $u$ and $v$ is at least two, and

(2) the vertices in $M$ are all within distance two from a fixed vertex.

By (1), we can color the edges in $G[L]$ and $G[R]$ independently, but also `collaboratively', and by (2), a coloring on $G[L]$ and $G[R]$ can be extended to the whole graph, because the edges incident with $M$ have clear structures.   We hope this idea can stimulate new ideas to attack Conjecture~\ref{conj:any-g}.

The paper is organized as follows.  %To prove Theorem \ref{thm:main}, we assume that $G$ is a counterexample (i.e., $\Delta(G)\le 4$ and $\chi'_s(G) > 21$) such that $|V(G)|+|E(G)|$ minimized.
In Section \ref{sec:decom} we introduce some notation and prove various strutural statements about a minimal counterexample $G$.  In particular, we show that the girth of $G$ is at least six, whose proof is in Section~\ref{proof-le2}.  In Section \ref{sec:partition}, we obtain the partition described above.  In Section~\ref{sec:proof},  we show how to color the edges in $G[L]$ and $G[R]$ `collaboratively', and extend it to a coloring of the whole graph; this completes the proof of Theorem~\ref{thm:main}.

\section{Notation and some properties of minimal counterexamples} \label{sec:decom}

We will use the following notation.    For  two disjoint subsets of $V(G)$, call them $X$ and $Y$,  we let $E(X,Y)$ denote the set of edges of $G$ with one end in $X$ and the other end in $Y$.  For an edge $e=uv$, we let $N_1(e)$ be the set of edges incident with $u$ or $v$ in $G-{e}$, and we let $N_2(e)$ be the set of edges not in $N_1(e)$ that have an endpoint adjacent to either $u$ or $v$  in $G-{e}$. We denote the set of edges of $N_1(e) \cup N_2(e)$ by $N(e)$, so that $N(e)$ contains at most $24$ edges in a graph with maximum degree at most four.  Furthermore, if $e' \in N(e)$, we will say that $e$ \emph{sees} $e'$ and vice-versa.

A \emph{partial strong edge-coloring} (or we will sometimes say a \emph{good partial coloring}) of $G$ is a coloring of any subset of $E(G)$ such that if any two colored edges $e_1$ and $e_2$ see one another in $G$, then $e_1$ and $e_2$ receive different colors.  In particular, if a partial strong edge-coloring spans all of $E(G)$, then it is a strong edge-coloring of $G$.  Given a partial strong edge-coloring of $G$, call it $\phi$, we define $A_\phi(e)$ to be the set of colors available for edge $e$.

In the rest of this paper, we assume that G is a minimal counterexample with $|V(G)|+|E(G)|$ minimized.  Here are some structural lemmas regarding $G$.

\begin{lemma}\label{le1}
$G$ is 4-regular.
\end{lemma}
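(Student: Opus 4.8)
The plan is a standard minimal-counterexample / discharging-free reducibility argument: suppose $G$ has a vertex $v$ with $d(v) \le 3$, and derive a contradiction by extending a strong edge-coloring of a smaller graph. First I would let $e = uv$ be an edge incident with a vertex $v$ of minimum degree; by minimality of $|V(G)| + |E(G)|$, the graph $G - e$ (or $G - v$ if $v$ becomes isolated, handled trivially) admits a strong edge-coloring $\phi$ using at most $21$ colors, since $\Delta(G - e) \le 4$. It remains to show that $\phi$ can be extended to $e$, i.e. that $A_\phi(e) \ne \emptyset$. This follows from a counting estimate: the number of edges that see $e$ is $|N(e)| = |N_1(e)| + |N_2(e)|$, and if this is at most $20$ then some color among the $21$ is free for $e$.

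The key step is bounding $|N(e)|$ when $d(v) \le 3$. In a graph with $\Delta \le 4$, a generic edge has $|N_1(e)| \le 6$ and $|N_2(e)| \le 18$, giving the crude bound $24$ quoted in the text. But if $d(v) \le 3$, then $u$ and $v$ together are incident with at most $4 + 3 - 1 = 6$ edges other than $e$ is already too weak; I would instead count more carefully: $|N_1(e)| \le (d(u) - 1) + (d(v) - 1) \le 3 + 2 = 5$, and each edge in $N_1(e)$ together with the structure at its far endpoint contributes at most $3$ new edges to $N_2(e)$, so $|N_2(e)| \le 3 \cdot 5 = 15$; hence $|N(e)| \le 5 + 15 = 20$, leaving a color for $e$. (If $d(v) \le 2$ the bound is even smaller.) One should be slightly careful that this counting is an overcount when there are short cycles or shared neighbors — but an overcount only helps, so the bound $|N(e)| \le 20$ is safe, and $A_\phi(e) \ne \emptyset$.

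Thus $\phi$ extends to a strong edge-coloring of $G$ with at most $21$ colors, contradicting the choice of $G$ as a counterexample. Therefore $G$ has minimum degree $4$, and since $\Delta(G) = 4$, $G$ is $4$-regular. The main obstacle, such as it is, is simply getting the edge-counting bound $|N(e)| \le 20$ tight enough; the crude bound $24$ does not suffice, so the argument genuinely needs the hypothesis $d(v) \le 3$ to shave the count down to $20$. Everything else is routine invocation of minimality.
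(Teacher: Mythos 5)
Your proof is correct and takes essentially the same approach as the paper: a minimality argument followed by a greedy extension justified by counting the edges that see the uncolored edge. The only cosmetic difference is that you delete a single edge $uv$ and verify $|N(e)|\le 20$, whereas the paper deletes the vertex $v$ itself and observes that each of the at most three edges at $v$ then has at least $3$ available colors, so they can be colored in any order.
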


\begin{proof}
Suppose on the contrary that $v$ is a vertex of degree at most three with $N(v) \subseteq \{u_1,u_2,u_3\}$.  By the minimality of $G$, $G - v$ has a good coloring.  Observe that $|A(u_iv)| \ge 3$ for $i \in [3]$.  Thus, we can color the remaining edges in any order to obtain a good coloring of $G$. This is a contradiction.
\end{proof}

\begin{Lemma}\label{le21}
$G$ contains no edge cut with at most $3$ edges.
\end{Lemma}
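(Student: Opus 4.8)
The plan is to argue by contradiction: suppose $G$ has an edge cut $F$ with $|F| \le 3$, separating $V(G)$ into nonempty parts $V_1$ and $V_2$ with all edges between them lying in $F$. Since $G$ is $4$-regular by Lemma \ref{le1}, a parity/degree-sum argument inside $G[V_i]$ shows each $G[V_i]$ is almost $4$-regular: every vertex of $V_i$ has degree $4$ in $G$, so the number of edge-endpoints of $F$ inside $V_i$ equals $|F| \le 3$, and in particular each $V_i$ has at most three vertices incident with $F$. I will form two auxiliary graphs $G_1$ and $G_2$, where $G_i$ is obtained from $G[V_i]$ by adding a small gadget (a few new vertices/edges) that reattaches the $\le 3$ dangling half-edges of $F$ so that $G_i$ is again a graph with maximum degree at most $4$ and with $|V(G_i)| + |E(G_i)| < |V(G)| + |E(G)|$. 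The natural gadget is to join the (at most three) dangling ends to a single new vertex, or to pair them up through short paths, chosen so that no new adjacency is forced that would obstruct the recoloring step.

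Next I would invoke minimality of $G$: each $G_i$ is strongly edge-colorable with $21$ colors. The key step is to stitch these two colorings of $G_1$ and $G_2$ into a good partial coloring of $G$ that colors all of $E(G)$ except the $\le 3$ edges of $F$, and then extend. Concretely, restrict the coloring of $G_i$ to the genuine edges of $G[V_i]$; by relabeling colors on one side (a permutation of the $21$ colors is harmless), arrange that these two partial colorings are compatible — meaning no edge of $G[V_1]$ that sees an edge of $G[V_2]$ (which can only happen within distance two of $F$) shares a color. Because $|F| \le 3$, only a bounded number of edges near $F$ need to be made to agree, and a counting argument shows a suitable color permutation exists (the number of forbidden permutations is far smaller than $21!$; alternatively one designs the gadget so that the relevant edges near the cut are already colored consistently as part of $G_1$ or $G_2$). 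Finally, each uncolored edge $e \in F$ has $|N(e)| \le 24$ neighbors, but crucially $e$ sees at most $3 - 1 = 2$ other edges of $F$ together with the colored edges of $G[V_1]$ and $G[V_2]$ incident within distance two; one checks $|A_\phi(e)| \ge 1$ for each, and colors the edges of $F$ one at a time.

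I expect the main obstacle to be the compatibility/stitching step: one must choose the gadget attaching the dangling half-edges in $G_1$ and $G_2$ carefully so that (a) the resulting graph still has maximum degree $\le 4$, (b) it is strictly smaller than $G$ so minimality applies, and (c) the induced colorings near the cut can be reconciled by a global color permutation while still leaving each of the $\le 3$ cut-edges a free color. The case $|F| = 3$ is the delicate one, since then the removed cut-edges can see each other and the budget of $21 - |N(e) \cap \text{colored}|$ is tightest; here it may be cleanest to split into subcases according to how the three cut-edges are distributed among the vertices of $V_1$ and $V_2$ (e.g.\ whether two of them share an endpoint), and in each subcase to exhibit an explicit gadget. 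If the direct permutation count is awkward, an alternative is to first color $G_1$, then delete from $G_2$ the edges of $F$, strongly color the rest of $G$ using the partial coloring on $G[V_1]$ as a precoloring constraint — but that reintroduces $G$ itself, so the gadget approach is preferable and I would push the subcase analysis through there.
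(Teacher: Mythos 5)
Your overall architecture is exactly the paper's: split $G$ along the cut into $G_1$ and $G_2$, attach a single new vertex to the at most three dangling ends on each side to form smaller graphs of maximum degree at most $4$, invoke minimality to $21$-color each, reconcile the two colorings by a permutation of colors, and finish by coloring the cut edges. The permutation step you describe also works essentially as in the paper: the edges of $G[V_1]$ that see edges of $G[V_2]$ are precisely those incident with the at most $2t\le 6$ endpoints of the cut, at most $18$ edges in total, and since there are $21-t\ge 18$ colors to spare one can permute the colors on one side to make all of these distinct (the single-vertex gadget even guarantees that the edges at $a_1,\dots,a_t$ already pairwise see each other through $z_1$, hence are rainbow on each side).

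The genuine gap is in your last step. You propose to "check $|A_\phi(e)|\ge 1$" for each cut edge $e=a_sb_s$, but this cannot be done by counting: such an edge can see up to $6+18=24$ colored edges of $G[V_1]\cup G[V_2]$, so with only $21$ colors nothing prevents every color from appearing in $N(e)$, and the greedy extension would fail. The missing idea is that the free color must be \emph{reserved by the gadget}, not found afterwards: rename so that $z_1a_s$ and $z_2b_s$ both receive color $s$ for each $s\le t$. Then in $G_1'$ the edge $z_1a_s$ sees every edge of $G_1$ incident with $a_s$ or with a neighbor of $a_s$, and also (through $z_1$) every edge incident with any $a_j$; these are exactly the $G_1$-edges that $e_s$ sees in $G$, so none of them carries color $s$, and symmetrically for $G_2$. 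Hence color $s$ is available for $e_s$, and the cut edges themselves get distinct colors $1,\dots,t$. You gesture at this requirement ("leaving each cut-edge a free color") but supply no mechanism, and your fallback counting argument is false as stated; with the reservation of colors $1,\dots,t$ built into the renaming, the rest of your argument goes through.
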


\begin{proof}
 Suppose otherwise that $G$ contains a smallest edge cut with at most $t\le 3$ edges, say $e_1=a_1b_1, \ldots, e_t=a_tb_t$. By the minimality of $G$, $G$ is connected.  So $G-\{e_1, \ldots, e_t\}$ contains two components, say $G_1$ and $G_2$, so that $a_1, \ldots, a_t\in G_1$ and $b_1, \ldots, b_t\in G_2$.  Note that $a_t$'s and $b_t$'s may be not distinct.   Let $G_1'$ be the graph obtained from $G_1$ by adding vertex $z_1$ and edges $z_1a_1, \ldots, z_1a_t$.  Similarly, let $G_2'$ be the graph obtained from $G_2$ by adding vertex $z_2$ and edges $z_2b_1, \ldots, z_2b_t$.  By the minimality of $G$, both $G_1'$ and $G_2'$ can be colored with $21$ colors.

By renaming the colors, we may assume that $z_1a_s$ and $z_2b_s$ have the color $s$ for each $1\le s\le t\le 3$.  Again by renaming colors, we may assume that the colors appearing on edges incident with $a_1, a_2, \ldots, a_t, b_1, \ldots, b_t$ are all different, which is possible, since there are at most $18$ such edges but there are $21-t\ge 18$ colors other than $1,\ldots,t$. Now, we can obtain a coloring of $G$ by combining the colorings of $G_1'$ and $G_2'$:  keep the colors of the edges in $G_1$ and $G_2$, and color $e_1, \ldots, e_t$ with $1,\ldots, t$, respectively.  This is a contradiction.
\end{proof}

The girth of a graph $G$  is the length of its shortest cycle.

\begin{Lemma}\label{le2}
The graph $G$ has girth at least six.
\end{Lemma}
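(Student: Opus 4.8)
The plan is to rule out short cycles one length at a time, using the discharging-free ``identify a small edge set that is forced to repeat a color'' technique that already powered Lemmas~\ref{le1} and~\ref{le21}. Suppose $G$ has a cycle $C$ of length $g \le 5$. We know from Lemma~\ref{le1} that $G$ is $4$-regular, and from Lemma~\ref{le21} that $G$ is essentially $4$-edge-connected, so $C$ cannot be separated off by a small cut; this will be used to guarantee that the vertices of $C$ have enough ``outside'' neighbors. The idea in each case is to delete $C$ (or a carefully chosen subset of its vertices and edges), invoke minimality to color the smaller graph $G' = G - V(C)$ (or $G$ minus a few edges), and then count: the number of edges we must re-color is small, while each such edge $e$ sees at most $24$ other edges, and many of the colors among those $24$ coincide because of the cycle structure, so $|A(e)| > 0$ and in fact the available lists are large enough to finish greedily. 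Concretely, an edge $uv$ of $G$ with $u,v$ on a short cycle already has two of its ``$N_1$'' edges lying on the cycle, and the $N_2$-neighborhoods of the cycle edges overlap heavily, which is exactly the slack that $21 - (\text{something} < 21)$ provides.

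The key steps, in order: (i) reduce to showing $g \ne 3,4,5$ separately; (ii) for each case set $G' := G - V(C)$ and observe $\Delta(G') \le 4$, so by minimality $G'$ has a strong $21$-edge-coloring $\phi$; (iii) list the edges of $G$ not colored by $\phi$ — namely $E(C)$ together with the ``pendant'' edges from $V(C)$ to $V(G')$, of which there are at most $g + 2g = 3g \le 15$ — and bound, for each such uncolored edge $e$, the number of distinct colors on edges of $N(e)$ that are already colored; (iv) show this bound is at most $20$ (using the cycle to force repeats among the $\le 24$ edges of $N(e)$ and the small-cut lemma to prevent the neighborhoods outside $C$ from being too spread out), so each $|A_\phi(e)| \ge 1$; (v) order the uncolored edges so that each is colored after enough of its neighbors that a greedy extension survives — here one typically colors the pendant edges first and the cycle edges last, or uses the fact that the cycle is short to recolor along $C$ by hand. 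A small refinement, already suggested by the $2K_2$-free remark in the introduction and by the girth target of exactly six, is that a triangle or short cycle forces many pairs of edges to ``see'' each other that would otherwise be free, collapsing the effective degree in $L^2(G)$.

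I expect the main obstacle to be the $g = 5$ case (and to a lesser extent $g = 4$ with chords, i.e. configurations like $K_4$ minus an edge or multi-edges forming short cycles). For $g = 3$ the neighborhood collapse is severe and $21$ colors are comfortably more than the number of edges one must recolor after deleting the triangle. For $g = 5$, deleting all five vertices removes up to $5 + 10 = 15$ edges, the pairwise overlaps among the $N(e)$ sets are weaker, and one has to be careful that the greedy order actually works — in particular one must handle the case where two vertices of $C$ share an outside neighbor, or where $C$ has a chord, since these change both the edge count and the neighborhood structure. The resolution will be to split $g=5$ into subcases according to how the outside neighbors of $V(C)$ interact (chords, common neighbors, multi-edges), apply Lemma~\ref{le21} to discard the degenerate ones, and in the remaining generic subcase do the color count carefully, likely coloring the ten pendant edges first (each sees at most $24$ edges but many colors repeat because their far endpoints are spread over $G'$) and then closing up the $5$-cycle, where a length-$5$ cycle needs only a bounded palette and $21$ is ample. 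Since the paper defers this proof to Section~\ref{proof-le2}, I anticipate it is the longest of the structural lemmas, with several subcases, but no single step is deep — it is a careful but routine reducibility argument.
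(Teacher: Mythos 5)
Your overall skeleton matches the paper's: rule out cycles of length $3$, $4$, $5$ in turn, each time deleting the cycle's vertices, coloring the remainder by minimality, and extending; and you correctly predict that $g=5$ is the hard case and that subcases on adjacencies among the outside neighbors are needed. But the mechanism you propose for closing each case --- show every uncolored edge sees at most $20$ distinct colors, hence has a nonempty list, then finish greedily in a good order --- has a genuine gap, and it is precisely the gap the paper's Section~\ref{proof-le2} is built to fill. After deleting a triangle, for instance, each pendant edge has only $6$ available colors yet sees all $8$ other uncolored edges, so no greedy order terminates on list sizes alone; the paper instead assigns the uncolored edges \emph{pairwise distinct} colors via Hall's theorem (systems of distinct representatives), where what must be bounded below is the union of the lists over every subset, not each list individually. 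Worse, in the $K_{2,4}$, $K_{2,3}$, $4$-cycle, and $5$-cycle cases the lists can genuinely be too small for either greedy or SDR (e.g.\ in the $K_{2,4}$ configuration all eight edges $u_iv_j$ can have exactly the same $7$-element list), so ``overlap of the $N(e)$'s forces repeats'' is not enough: the paper must actively \emph{create} repeats by locating two uncolored edges that do not see each other and giving them a common color, using the dichotomy that either such a merge succeeds (reducing the number of colors needed) or the union of their two lists is large (restoring Hall's condition). This merging device, iterated over families of candidate pairs ($\sS$, $\sT$ and maximal colorable subfamilies in the $5$-cycle case), is the engine of the proof and is absent from your plan.

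A second, smaller omission: to know that the outside neighbors $x_i,y_i$ are all distinct and to certify the non-adjacent pairs needed for the merging step, the paper first excludes $K_{3,3}$, $K_{2,4}$, and $K_{2,3}$ as subgraphs (each by its own SDR argument) before touching $4$- and $5$-cycles; Lemma~\ref{le21} plays essentially no role in this part. Your instinct to split on chords and common outside neighbors is right, but without these intermediate exclusions the ``generic'' subcase you describe cannot be isolated.
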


Since the proof of this lemma is long, we devote Section~\ref{proof-le2} to it.  The reader may skip the proof for now.

By Lemma~\ref{le2}, we may assume that $G$ is a simple graph.

\section{A partition of the vertices}\label{sec:partition}

Let $x$ be any vertex of $G$.  In this section, we consider a coloring strategy that leads to a partition of $V(G)$ into sets $L, M$, and $R$, such that there are no edges between $L$ and $R$,  the numbers of the edges in $E(L,M)$ and $E(M,R)$ are relatively small, and $M$ only contains some vertices within distance $2$ from $x$.   By Lemma~\ref{le1}, $G$ is 4-regular. So we let $N(x)=\{u,v,w,y\}$ and for $z\in N(x)$,  $N(z)=\{z_1,z_2,z_3,x\}$. By Lemma~\ref{le2}, above all these vertices are distinct. Furthermore, we let  $N(z_i)=\{z_{i1},z_{i2},z_{i3},z\}$  for $z\in N(x)$ (see Figure 1). Note that for $i,j,k,\ell \in \{1,2,3\}$ and $a,b \in \{u,v,w,y\}$, $a_{ij}, b_{k\ell}$ may be identical when $a\neq b$.
% $a_{ij} = b_{k\ell}$ if and only if $a \neq b$.

\medskip

We now give a partial strong edge-coloring of $G$, call it $\psi$, using three colors: assign the edges $uu_1,\ vv_1, \ ww_1$  with the color $1$, assign the edges $uu_2, \ vv_2$  with the color $2$, and assign the edges $uu_3, \ vv_3$  with the color $3$.

Consider the sequence $S_0$ of edges: $w_2w_{21},\ w_3w_{31}, \ ww_2,\ ww_3,\ xu,\ xv,\ xy,\ xw$. We extend $S_0$ to a sequence $S$ of uncolored edges such that the following hold:
\begin{enumerate}[(i)]
\item $S$ contains $S_0$, where $S_0$ is at the end of $S$;
\item for each edge $e$ of $S-S_0$, at least $4$ edges of $N(e)$ fall behind it in $S$;
\item among all sequences satisfying (i) and (ii), $S$ is longest.
\end{enumerate}

Observe that no edge outside of $S$ can see four edges in $S$, otherwise it could be added to the start of $S$ and contradict (iii).

\begin{figure}[h]
\centering
\includegraphics[scale=0.62]{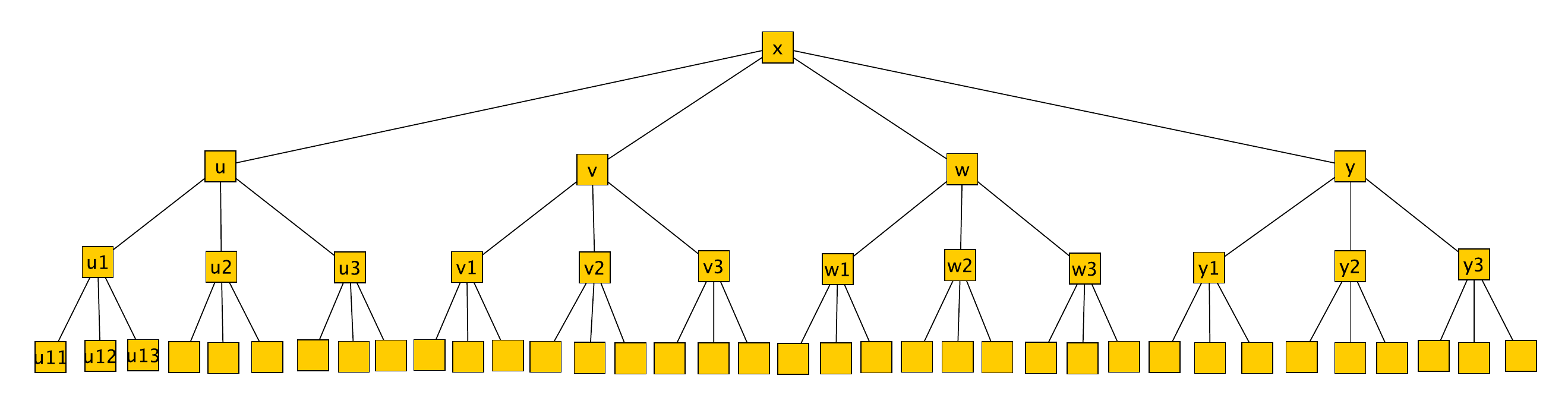}
\caption{{\footnotesize  4-regular graph}}\label{4-regular}
 \end{figure}

\begin{Lemma}
\label{le3}
 With $21$ colors, we may extend $\psi$ to a partial strong edge-coloring of $G$ that inlcudes all edges of $S$.
\end{Lemma}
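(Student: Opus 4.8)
The plan is to color the edges of $S$ in reverse order, i.e., from the front of $S$ toward $S_0$, and finish by handling $S_0$ explicitly by hand. Since $S$ begins with $\psi$ already fixed and $S_0$ sits at the tail, the natural induction is: process the edges of $S \setminus S_0$ first, one at a time, starting from the first edge of $S$; then color the eight edges of $S_0$ in the listed order; the three $\psi$-edges are already done. For an edge $e$ in $S \setminus S_0$, property (ii) guarantees that at least four edges of $N(e)$ come \emph{after} $e$ in $S$, hence are still uncolored when we reach $e$. Since $|N(e)| \le 24$, at most $20$ neighbors of $e$ are colored, so $|A_\psi(e)| \ge 21 - 20 = 1$ and we can extend. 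Thus the only real work is in making the counting tight enough, and in dealing with $S_0$, where property (ii) is not assumed.

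For the edges of $S_0$, I would argue by examining the local structure around $x$ and the vertices $u,v,w,y$ and using the specific colors already placed by $\psi$. Here girth at least six (Lemma~\ref{le2}) is crucial: it forces the relevant second-neighborhoods to be genuinely spread out, so that many of the $24$ potential edges in $N(e)$ either coincide or simply do not exist, and moreover it prevents an edge of $S_0$ from seeing the $\psi$-colored edges $uu_1, uu_2, uu_3, vv_1, vv_2, vv_3, ww_1$ more than a controlled number of times. Concretely, for $e = ww_2$, the edges of $N(e)$ already colored include $ww_1$ (color $1$), $ww_3$ and $w_2w_{21}$ (colored just before in $S_0$), and some edges through $w_3, u, v, y$; I would check that at most $20$ of them receive distinct colors, so a color remains. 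The same bookkeeping, done in the prescribed order $w_2w_{21}, w_3w_{31}, ww_2, ww_3, xu, xv, xy, xw$, should leave a free color at each step; the order is chosen precisely so that when we reach the edges $xu, xv, xw$, the constraints coming from $u_i, v_i, w_i$ are partly absorbed by the $\psi$-coloring (the repeated colors $1,2,3$ on $uu_i$ and $vv_i$ reduce the number of distinct forbidden colors these edges contribute).

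The observation recorded right after the definition of $S$ — that no edge outside $S$ sees four edges of $S$ — is not needed for Lemma~\ref{le3} itself; it is there for later sections. So the proof of Lemma~\ref{le3} is essentially: (a) a one-line greedy argument for $S \setminus S_0$ using (ii) and $|N(e)| \le 24$; (b) a finite, case-based verification for the eight edges of $S_0$, reading off the figure and using girth $\ge 6$ plus the fixed colors $1,2,3$ from $\psi$.

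I expect step (b), the treatment of $S_0$, to be the main obstacle: one must carefully enumerate, for each of the eight edges, which edges in its $24$-edge neighborhood are already colored and which of those colors coincide, and confirm that fewer than $21$ distinct colors are blocked. This is where the value $21$ (versus $20$) buys the necessary slack, and where the precise choice of $\psi$ (three colors on seven edges, with the pattern $1,1,1$ on $uu_1,vv_1,ww_1$ and $2,2$ / $3,3$ on the rest) must be used rather than just invoked.
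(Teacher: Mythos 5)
Your step (a) — greedily coloring $S\setminus S_0$ in order using property (ii) and $|N(e)|\le 24$ — matches the paper exactly, and your handling of $ww_2, ww_3, xu, xv, xy, xw$ is also essentially right: the first two see at least four later edges of $S_0$, and the last four see all seven $\psi$-colored edges, whose three repeated colors save enough slack. The gap is in your step (b) for the edges $w_2w_{21}$ and $w_3w_{31}$. You assert that ``the same bookkeeping\dots should leave a free color at each step,'' but for $e=w_2w_{21}$ the count genuinely fails: the only edges of $N(e)$ guaranteed to come after $e$ in the greedy order are $ww_2$, $ww_3$, and $xw$ (three edges, not four — girth $6$ prevents $e$ from seeing $w_3w_{31}$ or any of $xu,xv,xy$), and the only $\psi$-colored edge that $e$ sees is $ww_1$, so the repetition of colors buys nothing here. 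In the worst case the remaining $21$ edges of $N(e)$ are all colored, with $21$ distinct colors, and no color is available. No amount of careful enumeration avoids this; a new idea is required.

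The paper's fix is a recoloring step you do not have: if $w_2w_{21}$ is blocked, then the $21$ colored edges it sees carry $21$ \emph{distinct} colors, so in particular color $1$ appears in $N(w_2w_{21})$ only on $ww_1$. One therefore uncolors $ww_1$ and moves color $1$ to $w_2w_{21}$; this is safe because $ww_1$ still sees six uncolored edges ($ww_2,ww_3,xu,xv,xy,xw$) and can be recolored later, and $w_3w_{31}$ now sees four uncolored edges ($ww_1,ww_2,ww_3,xw$) and so can be colored greedily. The final edge $xw$ then still has a free color because it sees $w_2w_{21}$ colored $1$ together with the repeated colors on the pre-colored edges. Without this swap (or an equivalent device), your proof does not go through. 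A minor additional point: your claim that the observation ``no edge outside $S$ sees four edges in $S$'' is irrelevant to Lemma~\ref{le3} is correct — it is used only in the later structural lemmas.
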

%MS-I extended this proof slightly to say what happens to xu, xv, xw, xy, and to let the reader know that we're being careful about recoloring

\begin{proof}
Using 21 colors, greedily color the edges of $S$ in order, and let $e$ be the first edge of $S$ that cannot be colored.  Let $\phi$ denote this partial strong edge-coloring of $G$.  Observe that $e$ must be in $\{w_2w_{21}, w_3w_{31}, xu, xv, xy, xw\}$, as otherwise $|A_\phi(e)| \ge 21 - (|N(e)| - 4) = 1$, so that $e$ can be colored.  Further, by the repetition of colors on the pre-colored edges, $e \notin \{xu, xv, xy, xw\}$.  Thus, it suffices to consider $e \in \{w_2w_{21}, w_3w_{31}\}$.
%as $xw$ is uncolored and each of these edges see color $i$ on the edges $uu_i, vv_i$ for $i \i n\{1,2,3\}$.  Thus, $|A_\phi(e')| \ge 21 - (|N(e')| - 3 - 1) = 1$ for $e' \in \{xu,xv, xy\}$.   A similar argument holds for $e = xw$ if $xw$ sees an edge other than $uu_1, vv_1$ colored with 1.

 %provided that if the color 1 on $ww_1$ is reassigned to another edge, this edge must be seen by $xw$.
Without loss of generality, assume that $e = w_2w_{21}$.  Since $e$ cannot be colored, it follows that the 21 colored edges in $N(w_2w_{21})$ must be assigned 21 different colors. Thus, we can remove the color 1 from $ww_1$ and assign it to $w_2w_{21}$.  Observe that in this new partial strong edge-coloring, $w_3w_{31}$ sees at least 4 uncolored edges, and $ww_1$ sees at least 6 uncolored edges.  Hence, we can color $w_3w_{31}$ and recolor $ww_1$. Since $xw$ sees $w_2w_{21}$ colored with 1, there is a color available for the remainder of $S$ by the repetition of colors on the pre-colored edges.
\end{proof}

%Let $e$ be an arbitrary edge of $S$. If  $e\in S-S_0$, then by the construction of $S$, $|A(e)|\geq21-(|N(e)|-4)=1$. So we can color all edges in $S-S_0$ using $21$ colors. Now let $e=w_2w_{21}$. Assume that $|A(w_2w_{21})|=0$. Then it implied that all colored edges of $N(w_1w_{11})$ have $21$ distinct colors. We remove the color $1$ from $ww_1$ and assign it to $w_2w_{21}$. Since $ww_1,ww_2,ww_3,xw$ are uncolored,  $|A(w_3w_{31})|\geq1$. So there is a color available for $w_3w_{31}$. We recolor $ww_1$ as $|A(ww_1)|\geq3$. By the repetition of colors on the pre-colored edges, there is a color available for the remaining edges in the order $ww_2, ww_3, xu, xv, xy, xw$.
%
%Therefore, we assume that $|A(w_2w_{21})|\geq1$. Then there is a color available for $w_2w_{21}$. If $|A(w_3w_{31})|\geq1$, then we color the remaining edges of $S_0$ in the order. If $|A(w_3w_{31})|=0$, then we use the same method as above, removing the color $1$ from $ww_1$ and assigning it to $w_3w_{31}$ and recoloring $ww_1$. It is clearly that there is a color available for the remaining edges of $S_0$.  So all edges of $S$ can be colored using $21$ colors.
%\end{proof}

By Lemma~\ref{le3}, if $S$ contains all uncolored edges of $G$ under $\psi$, then we are done. So we assume that $S$ does not contain all uncolored edges of $G$.  Let $H$ be the set of uncolored edges not in $S$, and let $L$ be the set of endpoints of the edges in $H$. Then $L\neq \emptyset$. By the maximality of $S$, $w_2w_{22}$ appears in $S$ since $w_2w_{21}, ww_2, ww_3$ and $xw$ are in $S_0$. Similarly,  $w_2w_{23},w_3w_{32},w_3w_{33},yy_1,yy_2,yy_3$ appear in some order in $S$. So,  all edges incident with $x,u,v,w,y,w_2,w_3$ are either pre-colored or in $S$. By the definition of $L$, $x,u,v,w,y,w_2,w_3\notin L$.

\begin{Lemma}
\label{le4}
$E(G[L])=H$.
\end{Lemma}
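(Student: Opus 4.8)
The plan is to show that $H$, the set of uncolored edges not in $S$, is exactly the edge set of the induced subgraph $G[L]$, where $L$ is the set of endpoints of edges in $H$. The inclusion $H \subseteq E(G[L])$ is immediate from the definition of $L$: every edge of $H$ has both its endpoints in $L$. So the content is the reverse inclusion: every edge of $G$ with both endpoints in $L$ must itself lie in $H$.

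First I would set up the dichotomy. Let $e = ab$ be an edge with $a, b \in L$; I must show $e \in H$. Suppose not. Then $e$ is not an uncolored edge outside $S$, so $e$ is either pre-colored (under $\psi$) or lies in $S$. I would handle these two cases and derive a contradiction with the fact that both $a$ and $b$ are endpoints of edges in $H$. The key structural facts to invoke are: (a) by the paragraph preceding the lemma, $x, u, v, w, y, w_2, w_3 \notin L$, and every edge incident with one of these seven vertices is pre-colored or in $S$; (b) $G$ is $4$-regular (Lemma~\ref{le1}) and has girth at least six (Lemma~\ref{le2}), so it is simple and locally tree-like; and (c) the maximality condition (iii) on $S$, together with the observation that no edge outside $S$ can see four edges of $S$.

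The heart of the argument is the case where $e = ab$ lies in $S$ (the pre-colored case is similar or easier, since pre-colored edges are incident with $u, v, w$, which are not in $L$). Since $a$ is an endpoint of some edge $h \in H$, and $h$ is uncolored and not in $S$, the edge $e$ sees $h$ (they share the vertex $a$). Likewise $e$ sees an edge $h'$ of $H$ at $b$. I would then count how many edges of $N(e)$ are uncolored and outside $S$: at vertex $a$, besides $e$, there are three other incident edges (by $4$-regularity), and I want to argue enough of them — together with those at $b$ — are in $H$ to force a contradiction with a maximality/counting bound. The cleanest route: argue directly that since $a \in L$, \emph{all} edges incident with $a$ other than those forced to be pre-colored or in $S$ are in fact in $H$, and use that $a \notin \{x,u,v,w,y,w_2,w_3\}$ so no such forcing applies; hence all four edges at $a$ that are not in $S$ lie in $H$, and similarly at $b$. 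But $e \in S$ then sees at least four edges of $H$ (hence outside $S$). Now I would invoke the maximality of $S$: one should be able to show $e$ could be removed from $S$ — or more precisely, that the edges of $H$ incident to $e$ violate the construction of $S$, since property (ii) requires four edges of $N(e)$ to fall \emph{behind} $e$ in $S$, and an edge of $H$ appearing "behind" $e$ is impossible as $H \cap S = \emptyset$; combined with the "no edge outside $S$ sees four edges of $S$" observation applied to $h$, we get that $h$ sees at most three edges of $S$, constraining where $e$ sits relative to $a$'s other edges.

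I expect the main obstacle to be the bookkeeping in this last step: carefully reconciling property (ii) of $S$ (four neighbors behind $e$) with the maximality observation (no outside edge sees four edges in $S$), to pin down that an edge $e \in S$ with both endpoints in $L$ cannot exist without $S$ being extendable. The likely clean resolution is: if $e = ab \in S$ with $a,b \in L$, then consider the first such edge in $S$; all edges at $a$ and $b$ other than $e$ that precede $e$ in $S$ — there can be at most $|N(e)| - 4$ of them relevant — but the edges $h, h' \in H$ incident to $e$ are neither before nor after $e$ in $S$ (they are not in $S$ at all), which contradicts condition (ii) that requires four edges of $N(e)$ strictly behind $e$, once we account that $h, h'$ "use up" neighbor slots that cannot be behind. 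Making this precise — perhaps by instead directly appending $e$'s neighbors or re-deriving a contradiction from girth forcing the edges at $a$ and at $b$ to be distinct and all in $H$ — is the delicate part, but it is purely combinatorial and should go through with the $4$-regularity and girth hypotheses in hand.
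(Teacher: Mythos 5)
There is a genuine gap at the heart of your argument. Your setup is correct (the only content is $E(G[L])\subseteq H$; a pre-colored edge or an edge of $S_0$ cannot join two vertices of $L$ since $x,u,v,w,y,w_2,w_3\notin L$; so the bad edge $e=ab$ lies in $S-S_0$ and the contradiction must come from the maximality of $S$ together with the observation that no edge outside $S$ sees four edges of $S$). But the step ``hence all four edges at $a$ that are not in $S$ lie in $H$\,\dots\ so $e\in S$ sees at least four edges of $H$'' does not follow. Membership of $a$ in $L$ guarantees only that \emph{some} edge of $H$ is incident with $a$; the other edges at $a$ may perfectly well belong to $S$ (indeed $e$ itself does), and nothing about $a\notin\{x,u,v,w,y,w_2,w_3\}$ prevents that. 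Moreover, even if $e$ did see several edges of $H$, this would not contradict condition (ii): that condition asks for four edges of $N(e)$ behind $e$ in $S$, and $N(e)$ has up to $24$ members, so a handful of $N(e)$-edges lying in $H$ leaves plenty of room for four others to sit behind $e$ in $S$ --- in particular they can all lie in $N_2(e)$, at distance two from $e$.

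What is actually needed, and what the paper does, is a two-level localization of those four ``behind'' edges. First, none of them can lie in $N_1(e)$ and they must split exactly two-and-two between the $a$-side and the $b$-side, since otherwise $aa_1$ or $bb_1$ (the guaranteed $H$-edges at $a$ and $b$, hence not in $S$) would see four edges of $S$ and could be prepended to $S$, contradicting (iii). Writing $e_1=a_ia_{i1}$ for the later of the two $a$-side edges, one then repeats the analysis on the (at least three) edges of $N(e_1)\setminus\{e\}$ behind $e_1$: using the girth-$6$ condition to keep neighborhoods disjoint, each placement of those edges forces either $aa_i\in S$ directly or forces all four edges at $a_{i1}$ into $S$, which again puts $aa_i\in S$; and $aa_i\in S$ makes $aa_1$ see four edges of $S$, the final contradiction. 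Your proposal names the right tools but stops exactly where this recursion begins, so as written the proof does not go through.
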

\begin{proof}
Suppose otherwise that there exists an edge $e\in E(G[L])$ with endpoints $a$ and $b$ such that $a,b\in L$ but  $e\notin H$.  Let $N(a)=\{a_1,a_2,a_3,b\}$ and $N(b)=\{b_1,b_2,b_3,a\}$ where $aa_1,bb_1\in H$. Since $x,u,v,w,y,w_2,w_3\notin L$, every pre-colored edge and every edge of $S_0$ cannot join two vertices of $L$. So, $e\in S-S_0$. By the definition of $S$, at least 4 edges, say $e_1,e_2,e_3$ and $e_4$, of $N(e)$ are in $S$.  If say $e_1$ belongs to $N_1(e)$, then either $aa_1$ or $bb_1$ sees $e, e_1$, and two edges from $\{e_2, e_3, e_4\}$.  That is, either $aa_1$ or $bb_1$ can be added to $S$, which contradicts the maximality of $S$.

Therefore, $e_1,e_2,e_3,e_4\in N_2(e)$.
Furthermore, we claim that exactly two of these edges are incident with vertices in $\{a_1, a_2, a_3\}$, otherwise either $aa_1$ or $bb_1$ sees three of these edges along with $e$, and so is in $S$.  Without loss of generality, assume that $e_1, e_2$ are incident with vertices in $\{a_1, a_2, a_3\}$. Let's further assume that $e_1$ is behind $e_2$ in the sequence $S$, and let $e_1 = a_ia_{i1}$ for some $i \in [3]$.  Observe that $aa_2, aa_3 \notin S$, as otherwise $aa_1$ would see four edges in $S$, and so be in $S$.

We now show that $e_1$ is not in $S_0$, as otherwise  one of the endpoints of $e_1$ is incident with four edges in $S$.  Thus, $aa_i$ would see each of these four edges and so be in $S$, which is a contradiction.
 %We now claim that at least one of $e_1, e_2, e_3, e_4$ is not in $S_0$.  Suppose $e_1 \in \{xu, xv, xw, xy\}$.  Then one of the endpoints of $e_1$ is incident with four edges in $S_0$.  However, $aa_i$ would see each of these four edges and so be in $S$, which is a contradiction.  A similar contradiction holds if $e_1 \in \{ww_2, ww_3\}$, as one of the endpoints of $e_1$ would be incident with three edges in $S_0$, and $aa_i$ would see these three edges together with $e$ (recall that $e \in S - S_0$).  Thus, $e_1 \in \{w_2w_{21}, w_3w_{31}\}$.  However, simillar arguments would show that $e_j \in \{w_2w_{21}, w_3w_{31}\}$ for $j \in \{2,3,4\}$.  So we may assume that $e_1 \notin S_0$.

By the definition of $S$, at least three edges of $N(e_1)$ different from $e$ are behind $e_1$ in $S$.  We next assume that there is at least one edge of these edges incident with $a_{i1}, a_{i2}, a_{i3}$. Since $e_1,e_2$ and $e$ are in $S$, $aa_i\in S$, a contradiction.  So, all these three edges are incident with $N(a_{i1})\backslash\{a_i\}$. However, all four edges incident with $a_{i1}$ would see these three edges together with $e_1$, so that four edges incident with $a_{i1}$ are in $S$. Thus,  $aa_i \in S$, again a contradiction.
\end{proof}

 Let $F=E(L,G-L)$ and $A=\{u_1,u_2,u_3,v_1,v_2,v_3,w_1\}$. We present the relationship between edges of $F$ and vertices of $A$ as follows.

\begin{Lemma}
\label{le5}
 Each edge of $F$ is incident with exactly one vertex of $A$, and each vertex in  $A$ is incident with at most two edges of $F$. Moreover, no vertex in $L$ is incident with two edges of $F$.
\end{Lemma}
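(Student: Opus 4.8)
The plan is to use one mechanism everywhere: if $a\in L$ then, by Lemma~\ref{le4}, $a$ lies on an edge $ap\in H$ with $p\in L$, and since $ap$ is uncolored and not in $S$, the maximality of $S$ forbids $ap$ from seeing four edges of $S$. So each case will reduce to exhibiting four edges of $S$ that such an $H$-edge is forced to see (once, four edges of $N(e)$ that cannot all lie behind an edge $e\in S\setminus S_0$). I will also use freely that an edge of $F$ joining $a\in L$ to $b\in V(G)\setminus L$ is not in $H$ (Lemma~\ref{le4}), hence is precolored or lies in $S$; that every edge at a vertex outside $L$ is precolored or lies in $S$; that $x,u,v,w,y,w_2,w_3\notin L$; and that $G$ is simple of girth at least six.

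For ``each edge of $F$ meets exactly one vertex of $A$'', the ``at most one'' half is immediate: $A$ is independent, since an edge inside $A$ would close a cycle of length at most five through $x$ and one or two of $u,v,w$. For the ``at least one'' half I would take $ab\in F$ with $a\in L$, $b\notin L$, assume $a,b\notin A$, and split into three cases. If $ab$ is precolored it is one of $uu_i,vv_i,ww_1$, and since $a\notin\{u,v,w\}$ its endpoint $a$ lies in $A$ — a contradiction. If some edge at $b$ is precolored then $b\in\{u,v,w\}\cup A$; the case $b\in A$ is excluded, and if $b\in\{u,v,w\}$ then, because $N(u),N(v),N(w)$ meet $L$ only inside $A$ (here I use $x,w_2,w_3\notin L$), the neighbour $a$ of $b$ lies in $A$ — again a contradiction. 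Otherwise all edges at $b$, and $ab$ itself, lie in $S$, and then the $H$-edge $ap$ sees $ab$ together with the three edges at $b$ other than $ab$, four edges of $S$, contradicting maximality.

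For the two ``at most two'' statements I would proceed as follows. Suppose $a\in L$ meets two edges $ab,ab'$ of $F$; by the previous part, either $a\in A$, or $b,b'\in A$. If $a\in A$, say $a=u_1$, then $u_1u\in F$ (as $u\notin L$), so besides $u_1u$ the vertex $u_1$ meets an $F$-edge $u_1u'$ with $u'\notin L$, and $u_1$ also meets some edge of $H$; since $u'$ carries no precolored edge, all four edges at $u'$ lie in $S$, and the $H$-edge at $u_1$ sees $u_1u'$ and the three edges at $u'$ other than $u_1u'$ — contradiction. If $a\notin A$, then $a$ is a common neighbour of $b,b'\in A$, and since two neighbours of $u$ (or of $v$) cannot have a second common neighbour without a $4$-cycle, $b,b'$ are neighbours of distinct $\alpha,\beta\in\{u,v,w\}$; each of $b,b'$ then carries exactly one precolored edge, so its other three edges lie in $S$, and the $H$-edge at $a$ sees $ab$, $ab'$ and the two edges at $b$ other than $ab$ — contradiction. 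This proves the ``$L$'' statement, hence the ``$A$'' statement for vertices of $A$ lying in $L$. Finally, for $a^*=u_1\in A\setminus L$: here $u_1u\notin F$ and the three edges $u_1u_{11},u_1u_{12},u_1u_{13}$ lie in $S$, so I must rule out $u_{11},u_{12},u_{13}\in L$. If they were in $L$, then the $H$-edge at each $u_{1i}$ already sees $u_1u_{11},u_1u_{12},u_1u_{13}$, so by maximality it sees nothing else of $S$; this forces every edge at $u_{1i}$ other than $u_1u_{1i}$ into $H$, and applying the same step at each neighbour of $u_{1i}$ other than $u_1$ (which then also lies in $L$) forces all edges there to avoid $S$ as well. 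But then $u_1u_{11}\in S\setminus S_0$ has only three edges of $N(u_1u_{11})$ in $S$ — namely $u_1u_{12}$, $u_1u_{13}$, $ux$, every other edge of $N(u_1u_{11})$ being precolored or lying in $H$ — so fewer than four edges of $N(u_1u_{11})$ lie behind $u_1u_{11}$, contradicting property (ii) of $S$.

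The hard part is the bookkeeping in this last step: one must run carefully through the $24$-edge set $N(u_1u_{11})$ and verify that exactly three of its edges can lie in $S$, which requires pushing the ``sees at most three edges of $S$'' principle two steps away from $u_1$ (to the distance-three and distance-four vertices), and checking with the girth hypothesis that no unexpected coincidences occur among these edges. Everything else is a one-line application of that same principle.
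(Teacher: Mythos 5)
Your proof is correct, and its first two parts follow the paper's own route: the ``at most one vertex of $A$'' half comes from girth, the ``at least one'' half from the observation that the $(G-L)$-endpoint of an $F$-edge must carry a precolored edge, and the statement about vertices of $L$ is obtained exactly as in the paper by exhibiting four edges of $S$ seen by an $H$-edge at the offending vertex. Where you genuinely diverge is the remaining case of a vertex $u_1\in A\setminus L$ with $u_{11},u_{12},u_{13}\in L$. The paper orders the three edges $u_1u_{1i}$ inside $S$, applies property (ii) to the last of them to obtain a fourth edge of $S$ behind it, and argues that this fourth edge together with the three $u_1u_{1i}$ is seen by all four edges at some $u_{1j}$, contradicting $u_{1j}\in L$; this requires the small extra check that the exceptional edges incident with $u$ (or with $w$ when the vertex is $w_1$, where $xw, ww_2, ww_3\in S_0$ all lie in the relevant neighborhood) cannot account for all four such edges. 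Your argument instead propagates outward: each $H$-edge at $u_{1i}$ already sees $u_1u_{11}, u_1u_{12}, u_1u_{13}$, hence by maximality sees nothing else of $S$, which forces every edge at the $u_{1i}$ and at their neighbours (other than the edges $u_1u_{1i}$ themselves) to be precolored or in $H$; then property (ii) applied to $u_1u_{11}$ gives the contradiction, since only $u_1u_{12}$, $u_1u_{13}$ and $xu$ of $N(u_1u_{11})$ remain in $S$. The enumeration you flag as the delicate step does go through under girth six (no coincidences among the relevant vertices can occur, and the precolored edges and $S_0$-edges near $u$ are exactly as you list them), so both arguments are valid; yours trades the paper's ordering trick for a more mechanical but self-contained count of $N(u_1u_{11})\cap S$.
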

\begin{proof}
First note that if $e$ is an edge in $F$ with endpoints $z \in L$ and $z' \in V(G - L)$, then $z'$ must be incident with a pre-colored edge by $\psi$.  If not, then every edge incident with $z'$ is in $S$, and consequently, every edge incident with $z$ is in $S$, by the maximality of $S$.  Yet this contradicts $z \in L$.

Now suppose $e$ is an edge of $F$. Then $e$ is incident with at most one vertex of $A$. Otherwise, the girth of $G$ is at most 5, contrary to Lemma~\ref{le2}. Now we show that $e$ is incident with at least one vertex of $A$.  As shown above, one of the endpoints of $e$ must be incident with a pre-colored edge.  We are done unless $e \in \{xu, xv, xw\}$.  Yet $x,u, v, w \in V(G - L)$, which contradict that $e$ is an edge of $F$.  Therefore, each edge of $F$ is incident with exactly one vertex of $A$.

Next we show that each vertex in  $A$ is incident with at most two edges of $F$.  Suppose otherwise that a vertex $a\in A$ is incident with three edges of $F$. Assume that $a\in L$. Since $u,v,w\in V(G-L)$, one edge of these three edges is pre-colored and other two edges are uncolored. Let $aa'$ be such an uncolored edge where $a'\in V(G-L)$.  By Lemma \ref{le2}, $a'$ is not incident with a pre-colored edge.  Thus, every edge incident with $a'$ is in $S$.  Yet this would imply that every uncolored edge incident with $a$ is also in $S$, contrary to the assumption that $a \in L$.

So we assume that $a\in V(G-L)$. Since $u,v,w\in V(G-L)$, three edges, say $e_1=aa_1,e_2=aa_2$ and $e_3=aa_3$ where $a_1,a_2,a_3\in L$, are the three uncolored edges of $F$ incident with $a$.   Since $a \in V(G - L)$, $e_1, e_2, e_3 \in S$, and further, $e_1,e_2,e_3\notin S_0$.  We assume, without loss of generality, that both $e_1$ and $e_2$ preceed $e_3$ in the sequence $S$. By the definition of $S$, at least $4$ edges of $N(e_3)$ come after $e_3$ in $S$. However, one of these four edges together with $e_1, e_2, e_3$,  are seen by all four edges incident with either $a_1, a_2$, or $a_3$.  Thus, at least one of $a_1, a_2, a_3$ is incident with four edges in $S$, which contradicts $a_1,a_2,a_3\in L$.

We finally show that no vertex in $L$ is incident with two edges of $F$. Suppose otherwise that the vertex $z\in L$ is incident with
two edges of F, and let $z'$ and $z''$ be the other endpoints of these edges.  As shown at the start of this proof, $z'$ and $z''$ are incident with pre-colored edges.  As a consequence, $z \notin A$, and further $z \neq x$.  Thus, $z', z'' \in A \cap V(G - L)$.  So $z'$ and $z''$ are surrounded by 3 edges in $S$, respectively.  Yet every edge incident with $z$ sees these edges in $S$, and so $z \notin L$, a contradiction.
\end{proof}

Let $V_F, V_F'$ be the endpoints of $F$ in $G-L$ and in $L$, respectively. So $F=E(G-L, L)=E(V_F, V_F')$.  Let $M=\{x,u,v,w\}\cup V_F$ and $R=V(G)-L-M$. (See Figure~\ref{figure2} for an example.)  Observe that $E(L,R)=\emptyset, E(L,M)=F$ and $y,w_2,w_3\in R$.  Furthermore, if $z \in V_F$, then $z$ is incident with a pre-colored edge under $\psi$; for otherwise, $z$ is incident with four edges in $S$ and its neighbor in $L$ would then be incident with four edges in $S$, a contradiction.  Thus, $V_F \subseteq A \cup \{x,u,v,w\}$.

\begin{figure}[h]
\includegraphics[scale=0.3]{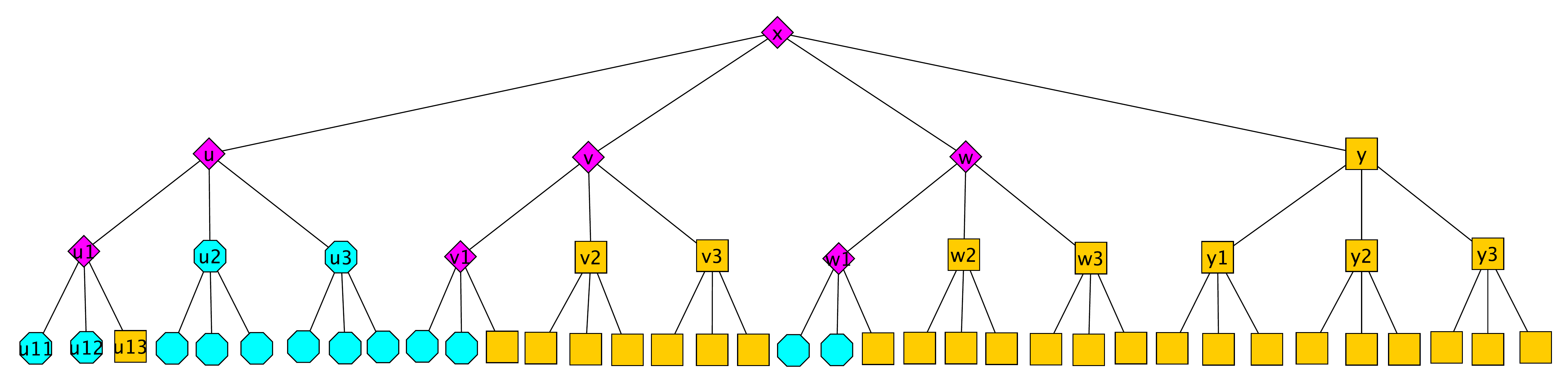}
\caption{A possible partition of the vertices with  $M=\{x, u, v, w, u_1, v_1, w_1\}$ (diamond vertice),  $R$ (square vertices) and $L$ (octagon vertices), where $F=\{uu_2, uu_3, u_1u_{11}, u_1u_{12}, v_1v_{11}, v_1v_{12}, w_1w_{11}, w_1w_{12}\}$ and $V_F=\{u, u_1, v_1, w_1\}$. }
\label{figure2}
\end{figure}

An important observation is that no edges from $G[L]$ and $G[R]$ see each other, so they can be colored independently and be combined together without the need of changing their colors. Now we state some straightforward results as follows.

 \begin{Lemma}
\label{le6} For $z\in \{u,v,w\}$ and $i,j,k\in [3]$, each of the following holds.
\begin{enumerate}[(1)]
\item If $z_i\in M$, then for some $k\neq j$, $z_iz_{ij}\in F$ and $z_iz_{ik}\in E(M,R)$.
\item If $z_iz_{ij}\in F$, then $z_i\in M,z_{ij}\in L$ and three edges incident with $z_{ij}$ are in $G[L]$.
\item If $z_i\in L$, then $z_iz_{ij}\in E(G[L])$.
\item If $z_i\in R$, then $z_iz_{ij}\in E(G[R])$. Further, $yy_j\in E(G[R])$.
\item If $z_iz_{ij}\in E(M,R)$, then $z_i\in M, z_{ij}\in R$ and at least one edge incident with $z_{ij}$ is in $G[R]$.
\item If $z_i\neq w_1$ and $z_iz_{ij},z_iz_{ik}\in E(M,R)$, then at least three of the eight edges incident with $z_{ij}$ and $z_{ik}$ are in $G[R]$.
\end{enumerate}
\end{Lemma}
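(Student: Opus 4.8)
The plan is to reduce all six items to three structural facts about the neighbourhood of $x$ and then dispatch each item with a short case analysis; recall that $G$ is $4$-regular and has girth at least six.

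First I would pin down where the vertices in play can sit. Since $z\in\{u,v,w\}\subseteq N(x)$ and $z_i\in N(z)\setminus\{x\}$, the distinctness of $x,u,v,w,y$ and the $z_j$'s (a consequence of Lemma~\ref{le2}) forces $z_i$ to be at distance exactly $2$ from $x$; hence $z_i\notin\{x,u,v,w\}$, and if $z_i\in M$ then $z_i\in V_F\subseteq A$, so $z_i$ is one of $\{z_1,z_2,z_3\}\cap A$ (which is $\{w_1\}$ when $z=w$). Next, for any neighbour $z_{ik}$ of $z_i$ other than $z$, the walk $x\,z\,z_i\,z_{ik}$ together with any putative walk of length at most $2$ from $x$ to $z_{ik}$ would yield a cycle of length at most $5$; since the girth is at least six, $z_{ik}$ lies at distance exactly $3$ from $x$. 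Consequently $z_{ik}\notin\{x,u,v,w\}\cup A\supseteq M$, so $z_{ik}\in L\cup R$, and because $E(L,R)=\emptyset$ and $E(L,M)=F$, the side $z_{ik}$ ends up on is governed entirely by whether $z_iz_{ik}\in F$. Finally I would record the key girth consequence: a vertex at distance $3$ from $x$ is adjacent to at most one of $u_1,u_2,u_3$ (two would give a $4$-cycle through $u$), at most one of $v_1,v_2,v_3$, and at most to $w_1$, so it has at most three neighbours in $M$, all of them in $A$.

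Given these, the individual parts are short. For (3): $z_i\in L$ makes $zz_i\in E(L,M)=F$, so by Lemma~\ref{le5} $z_i$ meets no further edge of $F$, hence $z_iz_{ij}\notin F$ and, since $E(L,R)=\emptyset$, $z_{ij}\in L$. For (4): $z_i\in R$ excludes $z_{ij}\in L$ and $z_{ij}\in M$, leaving $z_{ij}\in R$; the identical argument at $y\in R$ (using $y_j\notin A\cup\{x,u,v,w\}$, by distinctness) gives $yy_j\in E(G[R])$. For (2): if $z_iz_{ij}\in F$ its $L$-endpoint is not $z_i$, else $z_i$ would meet the two $F$-edges $zz_i$ and $z_iz_{ij}$, contrary to Lemma~\ref{le5}; so $z_{ij}\in L$ and $z_i\in G-L$, and $z_i\notin R$ because $E(L,R)=\emptyset$, hence $z_i\in M$; applying Lemma~\ref{le5} at $z_{ij}$ shows its other three edges avoid $F=E(L,M)$ and so lie in $G[L]$. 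For (1): $z_i\in M$ puts $z_i\in V_F$, so $z_i$ meets between one and two edges of $F$, all among $z_iz_{i1},z_iz_{i2},z_iz_{i3}$ (the edge $zz_i$ joins two vertices of $M$); picking $z_iz_{ij}\in F$, at least one of the remaining two, say $z_iz_{ik}$, is not in $F$, whence $z_{ik}\in R$ and $z_iz_{ik}\in E(M,R)$. For (5): part (4) rules out $z_i\in R$, so $z_i\in M$ and $z_{ij}\in R$, and as $z_{ij}$ has at most three neighbours in $M$ and none in $L$, it has a neighbour in $R$.

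Part (6) is the only one that needs extra counting, and I expect its bookkeeping to be the main obstacle. By (5) both $z_iz_{ij},z_iz_{ik}\in E(M,R)$ force $z_i\in M\cap A$; with $z_i\neq w_1$ and $w_2,w_3\in R$ this gives $z\in\{u,v\}$, say $z=u$ and $z_i=u_p$, with $z_{ij},z_{ik}\in R$ and $z_{ij}\not\sim z_{ik}$ (else a triangle through $u_p$). Each of $z_{ij},z_{ik}$ already contributes at least one edge to $G[R]$ by (5); if both contributed exactly one, then each would have exactly three neighbours in $A$, which can only be $u_p$, one of $v_1,v_2,v_3$, and $w_1$ — but then $z_{ij}$ and $z_{ik}$ would share the neighbours $u_p$ and $w_1$, producing a $4$-cycle and contradicting Lemma~\ref{le2}. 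Hence one of them has at least two edges in $G[R]$, and since $z_{ij}\not\sim z_{ik}$ these together with the other's edge give at least three distinct edges in $G[R]$ among the eight. Throughout, the delicate points are confirming that the relevant short closed walks are genuine cycles (so that girth six applies) and keeping straight which of $F$, $E(M,R)$, $E(G[L])$, $E(G[R])$ a given edge is allowed to lie in; no single argument is deep, but the case bookkeeping must be done with care.
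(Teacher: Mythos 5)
Your proof is correct and takes essentially the same route as the paper's: membership in $M$ is pinned down via $V_F\subseteq A\cup\{x,u,v,w\}$ (you phrase this as a distance-from-$x$ stratification enforced by girth six, the paper as incidence with a pre-colored edge under $\psi$), parts (1)--(3) follow from Lemma~\ref{le5}, and parts (4)--(6) from the girth-based bound of at most one neighbour in each of $\{u_1,u_2,u_3\}$, $\{v_1,v_2,v_3\}$, $\{w_1\}$. The only cosmetic difference is that in (6) you reach the contradiction through the shared neighbour $w_1$ rather than through two neighbours among $\{v_1,v_2,v_3\}$; both yield the same forbidden $4$-cycle.
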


\begin{proof}
Observe that if $z_i \in M$, then $z_i \in V_F$ and consequently,  $z_i \in A$ by Lemmas \ref{le2} and \ref{le5}.

Lemma \ref{le5} implies (1) as every vertex in $A$ is incident with at most two edges of $F$.

If $z_iz_{ij} \in F$, then $z_i \notin L$, else $z_i$ would be incident with two edges of $F$, namely $z_iz_{ij}$ and $z_iz$, contradicting Lemma \ref{le5}.  Therefore, $z_i \in M$ and $z_{ij} \in L$.  Further, every edge incident with $z_{ij}$ other than $z_iz_{ij}$ must be in $E(G[L])$.  This proves (2).

If $z_i \in L$, then $z_i \in A$ since $w_2, w_3 \in V(G - L)$.  Thus, $zz_i \in F$, and every other edge incident with $z_i$ must be in $E(G[L])$ by Lemma \ref{le5}.  This proves (3)

If $z_i \in R$ and $z_{ij} \notin R$, then $z_{ij} \in M$.  In particular, $z_{ij} \in V_F$ so that $z_{ij}$ is incident with a pre-colored edge under $\psi$. Yet this contradicts Lemma \ref{le2}. Thus,  $z_iz_{ij}\in E(G[R])$.
 Further, notice that $y\in R$. If
 $y_j \notin R$, then $y_j \in M$. So $y_j \in V_F$.  By Lemma \ref{le5}, $y_j$ is incident with a vertex of A. This contradicts Lemma \ref{le2}. Thus, $yy_j\in E(G[R])$.  This proves (4).

If $z_iz_{ij} \in E(M,R)$ and $z_{ij} \in M$, then $z_{ij} \in V_F$ and is incident with a pre-colored edge under $\psi$.  This contradicts Lemma \ref{le2} as previously.  So $z_{ij} \in R$ and $z_i \in M$, and furthermore, $z_i \in A$.  Observe that $z_{ij}$ has no neighbors in $\{x,u,v,w\}$, as this would contradict Lemma \ref{le2}.  Thus, if the three neighbors of $z_{ij}$ other than $z_i$ are in $M$, then are all in $V_F$ and are incident with pre-colored edges under $\psi$.  However, this implies that $z_{ij}$ has two neighbors in $\{a,a_1, a_2, a_3\}$ for some $a \in \{u, v,w \}$, which contradicts Lemma \ref{le2}.  This proves (5).

If $z_i \neq w_1$ and $z_iz_{ij}, z_iz_{ik} \in E(M,R)$, then $z_i \in M$ and $z_{ij}, z_{ik} \in R$ by (5).  Suppose that $z_{ij}$ and $z_{ik}$ each have two neighbors other than $z_i$ in $M$.  By Lemma \ref{le2}, $z_{ij}$ and $z_{ik}$ have four distinct neighbors other than $z_i$ in $M$, and furthermore, none of these four vertices are in $\{x,u,v,w\}$.  Hence they must be in $A$.  Since $z_i \neq w_1$, we may assume without loss of generality that $z_i = u_i$.  By Lemma \ref{le2}, neither $z_{ij}$ nor $z_{ik}$ can have a neighbor in $\{u_1, u_2, u_3\}$ other than $u_i$.  Thus, the four neighbors previously described must be $v_1, v_2, v_3, w_1$, which contradicts Lemma \ref{le2}.   This proves (6).
 \end{proof}

\section{How to color the vertices in $L$ and $R$ `collaboratively'} \label{sec:proof}

In this section, we prove Theorem \ref{thm:main}.  Before doing so, we first prove some lemmas that show $M \cap A \neq \emptyset$ and potential properties of the vertices in $M \cap A$.  In each of the following lemmas, we aim to color $E(G[L])$ and $E(G[R])$ and order the edges incident with $M$ so that each edge $e$ has at most $20$ different colors in $N(e)$, which leads to a strong edge-coloring of $G$.  We also remove the colors placed on the edges of $G$ by $\psi$ so that $G$ is completely uncolored.

\begin{Lemma}
\label{le40.1}
There is no vertex $z\in \{u,v,w\}$ such that $z_i\in L$, $z_j\in R$ and $z_k\in L\cup R$ for $i, j, k\in [3]$. In particular, $w_1\not\in L$.
\end{Lemma}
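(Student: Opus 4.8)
The plan is a reducibility argument: supposing such a $z$ exists, I will build a strong edge-coloring of $G$ with $21$ colors, contradicting the minimality of $G$. After relabeling $z_1,z_2,z_3$ we may assume $z_1\in L$, $z_2\in R$ and $z_3\in L\cup R$, so that \emph{all} of $z_1,z_2,z_3$ lie outside $M$. The assertion ``$w_1\notin L$'' then follows immediately: we have already established $w_2,w_3\in R$, so $w_1\in L$ would realize the forbidden configuration with $(z,z_1,z_2,z_3)=(w,w_1,w_2,w_3)$.

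First I record the local picture forced by the structural lemmas. Since $z_1\in L$, Lemma~\ref{le6}(3) together with Lemma~\ref{le5} gives $z_1\in A$, $zz_1\in F$, and all three edges $z_1z_{1j}$ in $G[L]$; hence each $z_{1j}\in L$, and since $z_{1j}$ lies at distance $3$ from $x$ (Lemma~\ref{le2}) it is outside $M$, so the nine edges $z_{1j}z_{1jk}$ also lie in $G[L]$. Thus the $12$ edges of $N(zz_1)$ incident with $z_1$ or with some $z_{1j}$ are all colored once $G[L]$ is colored, and they use at least $6$ distinct colors (the three edges at $z_1$ pairwise see one another, each is seen by every edge at $z_{11},z_{12},z_{13}$, and the three edges at a fixed $z_{1j}$ pairwise see one another). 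Since $z_2\in R$, Lemma~\ref{le6}(4) puts $zz_2\in E(M,R)$ and every other edge at $z_2$ into $G[R]$; similarly for $z_3$. Consequently the only edges incident with $M$ that $zz_1$, $zz_2$, or $zz_3$ can see are the three others at $z$ and the four at $x$ — six $M$-incident edges in each case.

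Now erase all colors. By minimality of $G$, the proper subgraphs $G[L]$ and $G[R]$ each admit a strong edge-coloring with $21$ colors; since $E(L,R)=\emptyset$ no edge of $G[L]$ sees an edge of $G[R]$, so their union is a good partial coloring, and we are free to permute the $21$ colors independently on each connected component of $G[L]$ and of $G[R]$. It remains to color the edges incident with $M$, grouped by distance from $x$: the $M\cap A$-level edges (each running into $L\cup R$ by Lemma~\ref{le6}(1) and Lemma~\ref{le2}), the edges $zz_i$ at $u,v,w$, and the star $xu,xv,xw,xy$. Girth $\ge 6$ makes the at most $24$ edges of each $N(e)$ distinct, and Lemma~\ref{le6} sorts them into edges lying in $G[L]\cup G[R]$ (already colored) and edges still incident with $M$. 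The aim is a single linear order on the $M$-incident edges, with the $M\cap A$-level handled first, then the $zz_i$, with the star interleaved, under which each edge $e$ meets at most $20$ colors when reached; here the configuration is used crucially, since $z_1,z_2,z_3\notin M$ forces each star edge at $x$ to see exactly the $12$ colored edges of $G[L]\cup G[R]$ in its neighborhood (for $xu$: the nine edges at $u_1,u_2,u_3$ together with $yy_1,yy_2,yy_3\in E(G[R])$), so the star only needs a few of the nine edges $uu_i,vv_j,ww_j$ ordered after it. The delicate edges are $zz_1\in F$ and $zz_2,zz_3$ (and any $F$-edge at $u,v,w$ all of whose three neighbors lie in $L\cup R$): such an edge can see as many as $18$ colored edges of $G[L]\cup G[R]$ yet has only six $M$-incident neighbors, so greedy ordering alone leaves an ``off by one''. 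This is where collaboration enters: the edges of $G[R]$ that $zz_1$ sees all lie in the first neighborhoods of $z$'s neighbors in $R$ — at most six edges spanning at most six colors — and we may permute the relevant components of $G[R]$ so that those colors fall inside the $\ge 6$-color palette that $G[L]$ already contributes at $z_1$, dropping the number of colors seen by $zz_1$ well below $15$; a symmetric maneuver handles $zz_2$ and $zz_3$. With the counts so reduced, the chosen order gives each $M$-incident edge at most $20$ colors in its neighborhood, completing a strong $21$-edge-coloring of $G$ — the desired contradiction.

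The main obstacle, I expect, is exactly this coordination: choosing one permutation per component of $G[L]$ and of $G[R]$ together with one linear order on the $M$-incident edges so that \emph{all} of the up to nine edges at $u,v,w$ (not merely those at the distinguished $z$), all the $M\cap A$-level edges, and all four star edges simultaneously see at most $20$ colors. I anticipate this reduces to a short case analysis on whether $z_3\in L$ or $z_3\in R$ and on which of $z_2,z_3$ lie in $A$, using the fact (from girth $\ge 6$) that the $G[L]$- and $G[R]$-neighborhoods hanging off distinct vertices of $\{u,v,w\}$ are mutually far, so the permutations and orderings localized at $u$, $v$, and $w$ barely interact.
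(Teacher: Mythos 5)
Your overall strategy is the paper's: color $G[L]$ and $G[R]$ separately by minimality, exploit the freedom to rename colors so that the neighborhoods of the edges at $z$ contain repeated colors, and then order the $M$-incident edges. But the proposal stops exactly where the proof has to be done. The step you defer as ``the main obstacle'' --- choosing the renamings and a single ordering under which \emph{every} remaining edge sees at most $20$ colors --- is the entire content of the lemma, and the specific coincidence you propose does not obviously work. Mapping the $\le 6$ colors on the $G[R]$-edges at $z$'s $R$-neighbors into the $\ge 6$-color palette at $z_1$'s $L$-side helps $zz_1$, but it can hurt $zz_2$ and $zz_3$: their neighborhoods contain the star of $z_1$ but \emph{not} the stars of $z_{11},z_{12},z_{13}$, so a color of $z_3$'s star that you send to a color used only at some $z_{1j}$ is still a ``new'' color for $zz_2$, and the count for $zz_2$ can reach $12+3+3+6>21$. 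You cannot then run a ``symmetric maneuver'' for $zz_2,zz_3$, because one global renaming of $G[R]$ (or of the single component containing $z_2$, $z_3$ and $y$) must serve all three edges at once. Finally, you never verify the star at $x$: the edge $xz$, colored last, sees up to $24$ edges and needs several forced color repetitions, not just ``a few of the nine edges $uu_i,vv_j,ww_j$ ordered after it.''

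The paper resolves the coordination differently and more concretely. It uses Lemma~\ref{le21} to get $|F|\ge 4$, picks another $F$-edge $aa'$, and forms $G_L$ by adding $z_1a'$ and $G_R$ by adding $z_3y$ (wlog $z_1\in L$, $z_3\in R$). Renaming makes the three $G[L]$-edges at $z_1$ and the three $G[R]$-edges at $z_3$ both use $\{1,2,3\}$, and makes the reserved color $d$ of $z_1a'$ --- guaranteed by the auxiliary edge to be absent from everything near $z_1$ and distinct from $1,2,3$ --- coincide with the color of $yy_1$. A two-case split on whether $d$ appears at $z_2$ then yields explicit orderings in which $xz$, taken last, sees four color-coincident pairs ($1,2,3$ at both $z_1$ and $z_3$, and $d$ twice), bringing its count to $20$. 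Your proposal contains none of this machinery, and also has a factual slip: from $z_{1j}\notin M$ you conclude that all nine edges $z_{1j}z_{1jk}$ lie in $G[L]$, but $z_{1jk}$ may be a vertex of $M\cap A$ (girth $6$ permits the $6$-cycle $x z z_1 z_{1j} z_{1jk} z' x$), so some of these are uncolored $F$-edges rather than colored edges of $G[L]$, which changes the counts you rely on.
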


\begin{proof}
Suppose otherwise that for some $z\in \{u,v,w\}$,  $z_1\in L$ and $z_3\in R$.  So $zz_1 \in F$.  By Lemma \ref{le6}(3)-(4),  for each $j\in [3]$, $z_1z_{1j}\in E(G[L])$, $z_3z_{3j}\in E(G[R])$ and $yy_j\in E(G[R])$.
By Lemma \ref{le21}, $|F|\geq4$. So, there are at least three edges different from $zz_1$ in $F$. Assume that $aa'$ is such an edge where $a\in V_F$ and $a'\in V_F'$.
Consider two graphs $G_L$ and $G_R$ as follows:
\begin{center} $V(G_L)=L$ and $E(G_L)=E(G[L])\cup\{z_1a'\}$;\end{center}
\begin{center} $V(G_R)=R$ and $E(G_R)=E(G[R])\cup\{z_3y\}$. \end{center}

Note that if $z_1a'$ already exists, then we add a parallel edge with endpoints $z_1$ and $a'$.
Recall that $x, z \in M$ so that $z_1$ has at most three neighbors in $L$,  $y$ and $z_3$ have at most three neighbors in $R$. Thus, $G_L$ and $G_R$ both have maximum degree at most 4.
By the minimality of $G$, both $G_L$ and $G_R$ have strong edge-colorings with 21 colors.
In $G_L$,  let the colors of the three edges incident with $z_1$ in $G[L]$ (other than the new $z_1a'$) be $1,2,3$, and the color of potentially new $z_1a'$ be $d$, respectively.  In $G_R$,  by renaming colors, let the colors of the three edges incident with $z_3$ in $G[R]$ be $1,2,3$ and the color of $yy_1$ be $d$, respectively.

We now color the edges in $G$ by giving the edges in $G[L]$ and $G[R]$ the same colors as in $G_L$ and $G_R$.  As observed before Lemma \ref{le6}, this yields a partial strong edge-coloring, which we will call $\phi$.  Thus, the edges uncolored by $\phi$ are exactly those in $F \cup E(G[M]) \cup E(M,R)$.  In particular, these are the edges incident with vertices in $M$, and recall that $M \subseteq A \cup \{x,u,v,w\}$.  Observe that the edges incident with $u, v, w$, and $x$ are all uncolored.

We now extend $\phi$ to some of the uncolored edges.  For $z'\in \{u,v,w\}-z$ and $i,j \in \{1,2,3\}$, assign $z'_iz'_{ij}$ with an available color if it is not colored yet, and assign $z'z'_i$ an available color.  This can be done as each of the aforementioned edges sees at least four uncolored edges.  This yields a new, partial strong edge-coloring, which we will call $\rho$. Observe that the edges incident with $z_2$ other than $zz_2$ are the colored edges under $\rho$.  Recall also that the edges incident with $z_1$ other than $zz_1$, and the edges incident with $z_3$ other than $zz_3$, are colored with 1,2, and 3.  Also, $yy_1$ is colored with $d$.

We finally color the remaining edges based on whether or not $d$ occurs on an edge incident with $z_2$.  Let $\{u,v,w\}-z=\{z',z''\}$.

 \begin{itemize}
\item If $d$ occurs at an edge incident with $z_2$, then color the remaining edges in the following order:
$xz',\ xz'', \ xy, \ zz_1,\ zz_3, \ zz_2, \ xz.$

\item If $d$ does not occur at the edges incident with $z_2$ in $G[R]$, then color $zz_1$ with $d$, and color the remaining edges in the following order:
$xz',\ xz'', \ xy, \ zz_3, \ zz_2, \ xz.$
 \end{itemize}

Note that in each case we always have a color available on the edges in the above sequence.  In particular, $xw$ will see four pairs of edges colored with 1,2,3, and $d$. %  Also, in the second case, we can color $zz_1$ with $d$ as $z_1a'$ was colored with $d$ in $G_L$.
Thus, $G$ has a strong edge-coloring with $21$ colors, a contradiction.
\end{proof}

\begin{Lemma}
\label{le40.2}
$M \cap A \neq \emptyset$.
\end{Lemma}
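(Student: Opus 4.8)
The plan is to assume $M\cap A=\emptyset$ and to derive a contradiction by exhibiting a strong $21$-edge-coloring of $G$.

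\textbf{Step 1: pin down the structure.} Since $\{x,u,v,w\}\cap A=\emptyset$ and (as noted before Lemma~\ref{le6}) $V_F\subseteq A\cup\{x,u,v,w\}$, the assumption forces $V_F\subseteq\{u,v,w\}$. For $z\in\{u,v,w\}$ none of $z_1,z_2,z_3$ can lie in $M$ (such a vertex would be a point of $V_F\cap A$), so $z_1,z_2,z_3\in L\cup R$, and Lemma~\ref{le40.1} then says that for each such $z$ either all three lie in $L$ or all three lie in $R$. Using $w_1\notin L$ (Lemma~\ref{le40.1}) we get $w_1,w_2,w_3\in R$, hence $w\notin V_F$. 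As $F$ is an edge cut, Lemma~\ref{le21} gives $|F|\ge4$, while each of $u,v$ that lies in $V_F$ has all three non-$x$ neighbors in $L$ and so contributes exactly three edges to $F$; therefore $V_F=\{u,v\}$, $|F|=6$, $M=\{x,u,v,w\}$, $u_1,u_2,u_3,v_1,v_2,v_3\in L$ and $y,w_1,w_2,w_3\in R$. By Lemma~\ref{le6}(2),(4), all edges at the $u_i,v_i$ other than the $uu_i,vv_i$ lie in $G[L]$, and all edges at the $w_i$ and at $y$ other than the $ww_i$ lie in $G[R]$; thus the edges of $G$ outside $G[L]\cup G[R]$ are exactly the $13$ edges incident with $M$, namely $xu,xv,xw,xy$, $uu_1,uu_2,uu_3$, $vv_1,vv_2,vv_3$, $ww_1,ww_2,ww_3$.

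\textbf{Step 2: auxiliary graphs with ``ghost'' copies of $M$.} I would let $G_L$ be $G[L]$ together with new vertices $u^\ast,v^\ast,x^\ast$ and edges $u^\ast u_i,v^\ast v_i$ ($i\in[3]$), $x^\ast u^\ast$, $x^\ast v^\ast$, and let $G_R$ be $G[R]$ together with new vertices $w^\ast,x'$ and edges $w^\ast w_i$ ($i\in[3]$), $x'w^\ast$, $x'y$. Each $u_i,v_i$ has degree $3$ in $G[L]$ and each $w_i,y$ has degree $3$ in $G[R]$, so $G_L,G_R$ have maximum degree $\le4$; since $L\ne\emptyset$ and $|R|\ge4$, both are strictly smaller than $G$, so by minimality each has a strong $21$-edge-coloring, say $c_L$ and $c_R$. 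Keep $c_L$ on $E(G[L])$ and $c_R$ on $E(G[R])$, and set $c(uu_i)=c_L(u^\ast u_i)$, $c(vv_i)=c_L(v^\ast v_i)$, $c(xu)=c_L(x^\ast u^\ast)$, $c(xv)=c_L(x^\ast v^\ast)$, $c(ww_i)=c_R(w^\ast w_i)$, $c(xw)=c_R(x'w^\ast)$, $c(xy)=c_R(x'y)$. Because $u^\ast,v^\ast,x^\ast$ sit in $G_L$ exactly as $u,v,x$ sit relative to $G[L]$ — here Lemma~\ref{le2} is used, e.g.\ to see that $u_1,u_2,u_3,v_1,v_2,v_3$ are pairwise nonadjacent and at distance $\ge3$ from each $w_i$ and from $y$, so the edge $x^\ast u^\ast$ sees in $G_L$ precisely the $G[L]$-edges that $xu$ sees in $G$, and similarly for the other ghost edges — every constraint between an $M$-edge at $u$ or $v$ and a colored edge of $G[L]$, and every constraint between two $M$-edges at $u$ or $v$, is already satisfied; symmetrically on the $R$-side with $c_R$.

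\textbf{Step 3: fix the cross-conflicts by renaming.} The only constraints not yet guaranteed are between $M$-edges incident with $\{u,v\}$ and $M$-edges incident with $\{w,y\}$, and they reduce to: $c(xw),c(xy)\notin C_L$, where $C_L:=\{c_L(x^\ast u^\ast),c_L(x^\ast v^\ast)\}\cup\{c_L(u^\ast u_i),c_L(v^\ast v_i):i\in[3]\}$ has size $\le8$, and $c(ww_i),c(yy_i)\notin\{c_L(x^\ast u^\ast),c_L(x^\ast v^\ast)\}$ for $i\in[3]$. Since $c_R$ is determined only up to a permutation of the $21$ colors, and the at most eight colors $c_R(x'w^\ast),c_R(x'y),c_R(w^\ast w_i),c_R(yy_i)$ that must be moved can be routed ($c_R(x'w^\ast),c_R(x'y)$ into $[21]\setminus C_L$, of size $\ge13$; the rest into $[21]\setminus\{c_L(x^\ast u^\ast),c_L(x^\ast v^\ast)\}$, of size $19$) and then extended to a permutation, a suitable renaming of $c_R$ exists. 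The resulting $c$ is a strong $21$-edge-coloring of $G$, contradicting the choice of $G$; hence $M\cap A\ne\emptyset$.

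The step I expect to demand the most care — and the one I would flag as the main obstacle — is the verification in Step 2 that, with the ghost vertices present, $c_L$ and $c_R$ indeed discharge \emph{all} the ``$M$-edge versus $G[L]$/$G[R]$-edge'' constraints and all the ``same-side $M$-edge versus $M$-edge'' constraints; this is a routine but lengthy case check resting on the girth bound of Lemma~\ref{le2}. The reason the ghost device is needed is that each of $xu,xv,xw,xy$ sees up to $24$ other edges and each of $uu_i,vv_i,ww_i$ up to $18$, so no plain greedy ordering of the $13$ edges around $M$ can succeed; introducing $u^\ast,v^\ast,x^\ast,w^\ast,x'$ is exactly what forces the colors borrowed from $c_L,c_R$ to be automatically compatible with the already-colored edges of $G[L],G[R]$, reducing everything to the small, permutation-fixable set of cross-conflicts handled in Step 3.
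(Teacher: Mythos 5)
Your Step 1 reproduces the paper's structural analysis almost verbatim: $M\cap A=\emptyset$ forces $V_F\subseteq\{u,v\}$, Lemma~\ref{le40.1} puts $w_1,w_2,w_3\in R$ and makes each of $u,v$ contribute all three or none of its $F$-edges, and $|F|\ge4$ (Lemma~\ref{le21}) then gives $V_F=\{u,v\}$ with $u_1,u_2,u_3,v_1,v_2,v_3\in L$. At exactly that point the paper stops: every edge leaving $L\cup\{u,v\}$ is one of $xu,xv$, so $\{xu,xv\}$ is an edge cut of size two, contradicting Lemma~\ref{le21} at once --- no coloring is needed. Your Steps 2 and 3 take a genuinely different (and much longer) route, building auxiliary graphs with ghost copies of $u,v,x,w$ and stitching $c_L$ and $c_R$ together by permuting the colors of $c_R$. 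As far as I can verify, that construction is sound: by the girth bound the ghost edges see in $G_L,G_R$ exactly the $G[L]$- and $G[R]$-edges that the real $M$-edges see in $G$, and your list of cross-conflicts ($xw,xy$ against the at most eight colors in $C_L$, and $ww_i,yy_i$ against $c(xu),c(xv)$; note $uu_i$ and $ww_j$, and $uu_i$ and $yy_j$, indeed do not see each other) is complete, so the permutation argument closes the proof. What the detour costs you is length and the case-check you yourself flag as delicate; what it buys is nothing here, since the contradiction is already available in your Step 1 via the very lemma you cite there. The ghost-vertex/recoloring machinery is essentially what the paper reserves for Lemmas~\ref{le40.1}, \ref{le8}, \ref{le9} and the final proof, where no small edge cut presents itself; for Lemma~\ref{le40.2} the edge-cut observation is the intended one-line ending.
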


\begin{proof}
Suppose otherwise $M \cap A = \emptyset$. Then the vertices of $A$ must be partitioned amongst $L$ and $R$, and furthermore $V_F \subseteq \{u,v,w\}$.    By Lemma \ref{le40.1}, $w_1\in R$, and for each $z\in \{u,v\}$, $z_1,z_2,z_3\in L$ or $z_1,z_2,z_3\in R$. Thus, $F$ contains all or none of edges in $\{zz_1, zz_2, zz_3\}$. Note that $F$ is an edge-cut and $ww_1, ww_2, ww_3\in E(M,R)$.  This implies that $V_F \subseteq\{u,v\}$, and additionally, $F \subseteq \{zz_i: z \in \{u,v\}, i \in \{1,2,3\}\}$.  However, this implies that $\{xu, xv\}$ is also an edge-cut, contrary to Lemma \ref{le21}.
\end{proof}

{\bf Remark 1:}  For $z\in \{u,v,w\}$, if $z_i\in M$ (and so is in $V_F$), then by Lemma \ref{le6}(1), $z_i$ is incident with an edge in $F$ and an edge in $E(M,R)$;  we may assume, as a convention, that $z_iz_{i1}\in F$ and $z_iz_{i3}\in E(M,R)$.

\begin{Lemma}
\label{le8}
There exists some vertex  $z_i\in M\cap A$ such that at least three of the eight edges incident with $z_{i2}$ and $z_{i3}$ are in $E(G[R])$.
\end{Lemma}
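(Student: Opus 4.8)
The plan is to argue by contradiction: suppose that for every $z_i \in M \cap A$, at most two of the eight edges incident with $z_{i2}$ and $z_{i3}$ lie in $E(G[R])$. Using Lemma~\ref{le40.2} we know $M \cap A \neq \emptyset$, so there is at least one such vertex to analyze. First I would pin down, for each $z_i \in M\cap A$, exactly how its neighbours $z_{i1},z_{i2},z_{i3}$ are distributed among $L$, $M$, and $R$. By Remark~1 we have $z_iz_{i1}\in F$ (so $z_{i1}\in L$) and $z_iz_{i3}\in E(M,R)$ (so $z_{i3}\in R$ by Lemma~\ref{le6}(5)). The remaining neighbour $z_{i2}$ lies in $L$, $M$, or $R$. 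By Lemma~\ref{le6}(5), since $z_{i3}\in R$, at least one of the three edges at $z_{i3}$ other than $z_iz_{i3}$ is in $E(G[R])$; under our assumption at most two of the eight edges at $z_{i2},z_{i3}$ are in $E(G[R])$, so at most one of the three edges at $z_{i2}$ (other than $z_iz_{i2}$) is in $E(G[R])$, and in fact the count at $z_{i3}$ is tightly constrained.

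Next I would rule out the possibility $z_{i2}\in R$: if $z_iz_{i2}\in E(M,R)$ as well, then since $z_i \in A$ and (by Lemma~\ref{le40.1}) $z_i \neq w_1$ — here one checks $z_i\notin\{w_1\}$ because $w_1\notin L$ forces $w_1\in R$, so $w_1\notin M\cap A$ — Lemma~\ref{le6}(6) gives that at least three of the eight edges incident with $z_{i2}$ and $z_{i3}$ are in $E(G[R])$, contradicting our assumption. So $z_{i2}\in L\cup M$. If $z_{i2}\in M$, then $z_{i2}\in V_F\subseteq A$ by Lemmas~\ref{le2} and \ref{le5}, but then $z_{i2}$ would be a neighbour of $z_i\in A$ that is also in $A$, creating a short cycle through $x$ or through a common neighbour, contradicting Lemma~\ref{le2}; so $z_{i2}\in L$, i.e. $z_iz_{i2}\in E(G[L])$ (this also forces $z_i$ to be incident with exactly two edges of $F$, namely $z_iz_{i1}$ and possibly others — cross-check with Lemma~\ref{le5}, which caps it at two).

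With $z_{i2}\in L$ and $z_{i3}\in R$ established for \emph{every} $z_i\in M\cap A$, I would now count edges of $F$ and of $E(M,R)$ and derive a global contradiction, in the spirit of the proof of Lemma~\ref{le40.2}. Each $z_i\in M\cap A$ contributes the edge $z_iz_{i1}$ to $F$; the edges $z_iz_{i2}$ lie in $G[L]$, the edges $z_iz_{i3}$ lie in $E(M,R)$. Combined with the structural facts that $ww_1,ww_2,ww_3\in E(M,R)$ (since $w_1\in R$ and, by Lemma~\ref{le40.1} applied to $z=w$, $w_2,w_3\in R$), that $F$ is an edge-cut of size $\geq 4$ by Lemma~\ref{le21}, and that $V_F\subseteq A\cup\{x,u,v,w\}$, I expect the positions of the neighbours to be so rigid that $F$ is confined to edges among $\{u,v,w\}$ and their first neighbourhoods in a way that exhibits a small edge-cut — contradicting Lemma~\ref{le21} — or else contradicts Lemma~\ref{le40.1} at some $z\in\{u,v,w\}$ by forcing a vertex $z$ with one neighbour in $L$, one in $R$, and the third in $L\cup R$.

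The main obstacle I anticipate is the last step: turning the local structure at each $z_i\in M\cap A$ into a clean global contradiction. Unlike Lemma~\ref{le40.2}, here $M\cap A$ may be nonempty, so the edge-cut argument must carefully track which of $u_1,u_2,u_3,v_1,v_2,v_3,w_1$ are in $M$ versus $L$ versus $R$, and for those in $M$, how their $F$-edges and $E(M,R)$-edges interact with the fixed edges at $u,v,w,x$. I would likely proceed by a short case analysis on $|M\cap A|$ and on which of $u,v,w$ have a first-neighbour in $M$, each time using Lemma~\ref{le40.1} to eliminate mixed $L/R$ patterns at $u,v,w$ and Lemma~\ref{le21} to eliminate small cuts; one of these cases should survive only as the desired conclusion, namely that some $z_i\in M\cap A$ has the claimed three edges in $E(G[R])$.
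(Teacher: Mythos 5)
There is a genuine gap, in fact two. First, your elimination of $w_1$ from $M\cap A$ is wrong: Lemma~\ref{le40.1} only gives $w_1\notin L$, which leaves $w_1\in M\cup R$, not $w_1\in R$. The case $w_1\in M$ is not vacuous, and it is precisely the case where your key tool breaks down, since Lemma~\ref{le6}(6) explicitly excludes $z_i=w_1$ (for $w_1$, the two $R$-neighbours $w_{1j},w_{1k}$ can each have one neighbour among $\{u_1,u_2,u_3\}$ and one among $\{v_1,v_2,v_3\}$ without violating the girth bound). The paper's proof devotes its first two cases entirely to $w_1\in M$, distinguishing whether $w_{13}$ has one or two incident edges in $G[R]$, and in each situation identifies forced adjacencies (e.g.\ $w_{13}$ adjacent to $u_1$ and $v_1$, so $u_{13}=v_{13}=w_{13}$) that your argument never confronts.

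Second, and more fundamentally, your planned endgame --- a global counting/edge-cut contradiction ``in the spirit of Lemma~\ref{le40.2}'' --- cannot succeed, because the configurations that survive the structural analysis are not structurally impossible. In every case of the paper's proof the contradiction comes from a \emph{coloring} argument: one builds auxiliary graphs $G_L$ on $L$ and $G_R$ on $R$ (augmented by a few carefully chosen extra edges or an extra vertex so that $\Delta\le 4$ is preserved), invokes the minimality of $G$ to $21$-color each, normalizes the colors $1,2,3,d$ on specific edges near $z_{i1}$ and $z_{i3}$ so the two colorings cooperate, and then extends to the roughly two dozen uncolored edges around $M$ in an explicit order (with a recoloring claim needed in the $w_1\notin M$ case). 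The obstacle you yourself flag at the end --- ``turning the local structure into a clean global contradiction'' --- is exactly where the real content of the lemma lives, and it is not a counting argument; without it the proof is not close to complete. Your preliminary structural observations (ruling out $z_{i2}\in R$ when $z_i\ne w_1$ via Lemma~\ref{le6}(6), and $z_{i2}\in M$ via the girth condition) do match steps the paper takes in its third case, but they are the easy part.
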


\begin{proof}
Suppose otherwise that for each vertex $z_i\in M\cap A$, at most two of the eight  edges incident with $z_{i2}$ and $z_{i3}$ are in $G[R]$.

\smallskip
{\bf Case 1.} $w_1\in M$ and there is only one edge incident with $w_{13}$ in $G[R]$.

In this case,  since $w_{13}\in R$, the other three edges incident with $w_{13}$ must be in $E(M,R)$.  In particular, $w_{13}$ has at least two neighbors in $M \cap A$ other than $w_1$.   By  Lemma~\ref{le2}, $w_{13}$ can be adjacent to at most one vertex in each of $\{u_1, u_2, u_3\}$ and $\{v_1, v_2, v_3\}$. Without loss of generality, we may assume that $w_{13}$ is adjacent to $u_1$ and $v_1$. Then $u_1, v_1\in M$. So, $u_1u_{11},\ v_1v_{11},\ w_1w_{11}\in F$, $u_1u_{13}, v_1v_{13}, w_1w_{13}\in E(M,R)$, where $u_{13}=v_{13}=w_{13}$.

Since $w_1w_{11}\in F$, by Lemma \ref{le6}(2), the three edges incident with $w_{11}$ (other than $w_1w_{11}$) are in $G[L]$. Similarly, there are three edges incident with $u_{11}$ (other than $u_1u_{11}$) that are in $G[L]$.  In particular, $u_{11}$ is not adjacent to either $w_{11}$ or $w_{12}$, as this would contradict Lemma \ref{le2}.  In addition, there exists $u_{11}u' \in E(G[L])$ where $u' \notin \{w_{11}, w_{12}\}$.

Let $G_L$ and $G_R$ be the following graphs:

\begin{center} $V(G_L)=L$ and $E(G_L)=E(G[L])\cup \{w_{11}u_{11}\}$; \end{center}
\begin{center} $V(G_R)=R$ and $E(G_R)=E(G[R])\cup\{w_{13}w_2,w_{13}w_3,w_{13}y\}$.\end{center}

Observe that $\Delta(G_L)$ and $\Delta(G_R)$ are both at most four.
By the minimality of $G$, each of $G_L$ and $G_R$ has a strong edge-coloring with 21 colors.  In $G_L$, let the colors of the three edges incident with $w_{11}$ in $G[L]$ be $1,2,3$, respectively, and the color of $u_{11}u'$ be $d$.    In $G_R$, let the color of the edge incident with $w_{13}$ in $G[R]$ be $1$, the color of $w_{13}w_2$ be $2$,  the color of $w_{13}w_3$  be $3$, and the color of $w_{13}y$ be $d$.  Clearly, $d\notin\{ 1,2,3\}.$

We now color the edges of $G$ by assigning the edges in $G[L]$ and $G[R]$ the same colors as in $G_L$ and $G_R$, respectively.   Observe that this yields a partial strong edge-coloring of $G$ in which the only uncolored edges are incident with vertices in $M \subseteq A \cup \{x,u,v,w\}$.  Recall that $u_1, v_1, w_1 \in M$.

 Since $yw_{13}, w_2w_{13}$, and $w_3w_{13}$ are colored with $d, 2$, and 3, respectively in $G_R$, we color $xy, ww_2, ww_3$ with $d, 2, 3$, respectively.

\begin{itemize}
\item If some edge incident with $w_{12}$ has been colored with $d$, then we first color the edges $u_2u_{2j}, u_3u_{3j}, v_2v_{2j}, v_3v_{3j}$ where $j\in [3]$ (if they are not colored) with available colors, and color the remaining edges in the following order:
\begin{align*}u_1u_{11}, \ u_1u_{12}, \ uu_1, \ uu_2, \ uu_3, \ xu, \ v_1v_{11}, \ v_1v_{12}, \  vv_2, \ vv_3,\\ xv,\ vv_1,\ v_1v_{13},\ u_1u_{13},\ xw,\ w_1w_{11},\ w_1w_{12},\ w_1w_{13},\ ww_1.
\end{align*}

 \item If the edges incident with $w_{12}$ are not colored with $d$ (including the case that they are not colored), then color $w_1w_{13}$ with $d$,   color the edges $u_2u_{2j}, u_3u_{3j}, v_2v_{2j}, v_3v_{3j}$ where $j\in [3]$ (if they are not colored) with available colors, and color the remaining edges in the order (recall that $u_{11}u'$ is colored with $d$):
\begin{align*}
u_1u_{11},\ u_1u_{12}, \  uu_1, \ uu_2, \ uu_3,\ xu, \ v_1v_{11}, \ v_1v_{12}, \ vv_2, \\ vv_3,\ xv, \ vv_1,\  v_1v_{13},\  u_1u_{13},\  xw,\  w_1w_{11},\  w_1w_{12}, \ ww_1.
\end{align*}
 \end{itemize}

So, $G$ has a strong edge-coloring with $21$ colors. It is a contradiction.

 \medskip
{\bf Case 2.} $w_1\in M$ and there are exactly two edges incident with $w_{13}$ in $G[R]$.

Recall from Remark 1, that $w_1w_{11} \in F$ so that $w_{11} \in L$, and $w_1w_{13} \in E(M,R)$.  If $w_{12} \in G - L$, then it must be in $R$, and by Lemma \ref{le6}(5), $w_{12}$ would have an edge incident with it in $G[R]$.  Yet, we are assuming that at most two of the eight edges incident with $w_{12}$ and $w_{13}$ are in $G[R]$, a contradiction.  So $w_{12} \in L$ and $w_1w_{12} \in F$.

Since $w_{13}$ has exactly two neighbors in $R$, we may assume without loss of generality, that $w_{13}$ is adjacent to $u_1$.  Then $u_1\in M$, and consequently, $u_1u_{11}\in F$ and $u_1u_{13}\in E(M,R)$, where $u_{13}=w_{13}$.  Consider two graphs $G_L$ and $G_R$ as follows:

\begin{center} $V(G_L)=L$ and $E(G_L)=E(G[L])\cup\{w_{11}w_{12}\}$; \end{center}
\begin{center} $V(G_R)=R$ and $E(G_R)=E(G[R])\cup\{w_{13}w_2, w_{13}w_3\}$.\end{center}

Notice that $w_{11},w_{12}\in L$, $w_1\in M$, $w_2,w_3\in R$ and $w_{13}$ has exactly two neighbors in $R$. So, both graphs $G_L$ and $G_R$ have maximum degree at most four.
  By the minimality of $G$, both $G_L$ and $G_R$ have strong edge-colorings with 21 colors.  In $G_L$, let the colors of the three edges incident with $w_{11}$, other than $w_{11}w_{12}$, be $1,2,3$, and let the color of one edge incident with $w_{12}$, other than $w_{11}w_{12}$, be $d$ (these edges exist by Lemma \ref{le6}(2)). Clearly, $d\neq1,2,3$. In $G_R$, by renaming colors, let the color of the edges incident with $w_{13}$, other than $w_{13}w_2$ and $w_{13}w_3$, be 1 and 2, let  the color of $w_{13}w_2$ be $3$, and let the color of $w_3w_{31}$ be $d$.

We now color the edges of $G$ by assigning the edges in $G[L]$ and $G[R]$ the same colors as in $G_L$ and $G_R$, respectively.  Observe that this yields a partial strong edge-coloring of $G$ in which the only uncolored edges are incident with vertices in $M \subseteq A \cup \{x,u,v,w\}$.  Recall that $u_1, w_1 \in M$.

  Next we color $ww_2$ with $3$.
We assign $u_ju_{jk}, v_jv_{jk}$ for $j,k\in[3]$, if not colored yet, with available colors except for $u_1u_{13}$, and assign $uu_i, vv_i$ for $i\in[3]$ with available colors.  Finally, we color the remaining edges in the order:
$$xv,\ xy,\ xu,\   u_1u_{13},\ xw,\ ww_3, \  w_1w_{11}, w_1w_{12}, w_1w_{13},\ ww_1.$$
So, $G$ has a strong edge-coloring with $21$ colors. It is a contradiction.

 \medskip
{\bf Case 3.} $w_1\notin M$.  By the symmetry of $u$ and $v$, we may assume that $u_1  \in M\cap A$.

Since $u_1u_{11}\in F$ and $u_1u_{13}\in E(M,R)$, by Lemma \ref{le6}(6), $u_1u_{12}$ must be in $F$.  By Lemma \ref{le6}(2),  the three edges incident with $u_{11}$ other than $u_1u_{11}$ and the three edges incident with $u_{12}$ other than $u_1u_{12}$ are in $G[L]$.   Since $u_1u_{13}\in E(M,R)$, by Lemma \ref{le6}(5), at least one edge incident with $u_{13}$ is in $G[R]$.

Since $w_1\notin M$,  Lemma~\ref{le40.1} implies that $w_1\in R$.  By Lemma~\ref{le6}(4), $w_1w_{1j}\in G[R]$ for $j\in [3]$. Since we are assuming that at most two of the eight edges incident with $u_{12}$ and $u_{13}$ are in $G[R]$, $u_{13}$ must have a neighbor other than $u_1$ in $M$.  Since $w_1 \notin M$, we may assume it is $v_1$ so that $v_1 \in M$.  Note that by Lemma \ref{le2}, $u_{13}$ cannot have any other neighbors in $M$, as they would be in $\{u_2, u_3, v_2, v_3\}$.   Thus, there are exactly two edges incident with $u_{13}$ in $G[R]$.

%It implies that $w_1u_{13}\not\in E(G)$.

 By a similar argument to the above, $u_1u_{11}, u_1u_{12}, v_1v_{11}, v_1v_{12}\in F$, $u_1u_{13}, v_1v_{13}\in E(M,R)$, and $u_{13}=v_{13}$. By Lemma \ref{le2}, $u_{11}, u_{12}, v_{11}, v_{12}$ are all distinct.

%By Lemma \ref{le40}, $w_1\notin L$. So, $w_1\in R$.
 Consider two graphs $G_L$ and $G_R$ as follows:

\begin{center} $V(G_L)=L$ and $E(G_L)=E(G[L])\cup\{u_{11}u_{12}\}$; \end{center}
\begin{center} $V(G_R)=R\cup \{b\}$ and $E(G_R)=E(G[R])\cup\{w_1b, w_2b, w_3b, u_{13}b, u_{13}y\}$.\end{center}

Notice that $x, w, u_1\in M$, $w_1, w_2, w_3, y\in R$ and $u_{13}$ has exactly two neighbors in $R$. So, both graphs $G_L$ and $G_R$ have maximum degree at most four.
 By the minimality of $G$, both $G_L$ and $G_R$ have strong edge-colorings with 21 colors.  In $G_L$, let the colors of the three edges incident with $u_{11}$  (other than $u_{11}u_{12}$) be $1,2,3$, and  the color of $u_{11}u_{12}$ be $d$. Clearly, $d\neq1,2,3.$  In $G_R$, by renaming colors, let the colors of the two edges incident with $u_{13}$ (other than $u_{13}b, u_{13}y$) be $1,2$, the color of $u_{13}y$ be $3$, the color of $u_{13}b$ be $d$, and the colors of $w_1b, w_2b, w_3b$ be $d_1,d_2,d_3$, respectively. Clearly, $\{d_1,d_2,d_3\} \cap \{1,2,3,d\} = \emptyset$.

\medskip

{\bf Claim:} the colors $1,2,3$ appear on edges incident with $v_{11}$ or $v_{12}$ in $G_L$.
\begin{quote}
\begin{proof}
 Suppose otherwise that  at least one of colors $1,2,3$ does not appear.  If 3 appears on an edge incident with $v_{11}$ or $v_{12}$ but 1 does not, then switch the colors 3 and 1 in $G_L$ so that 3 is missing.  We do a similar switch if 3 appears, but 2 does not.  Thus, we may assume that 3 does not appear on  edges incident with $v_{11}$ or $v_{12}$.

We now color the edges of $G$ by assigning the edges in $G[L]$ and $G[R]$ the colors used in $G_L$ and $G_R$, respectively.  Note that $v_{13}y\notin E(G)$, by Lemma \ref{le2}. So we can color $v_1v_{13}$ and $xy$ with $3$. We next color $ww_1, ww_2, ww_3$ with $d_1,d_2,d_3$, respectively.  We assign $u_2u_{2j}, u_3u_{3j}, v_2v_{2j}, v_3v_{3j}$ for $j\in[3]$ with available colors if they are not colored yet.  Finally,  we color the remaining edges in the  order:
$$v_1v_{11},\ v_1v_{12},\ vv_1,\ vv_2,\ vv_3,\ xv,\ xw,\ xu,\ uu_2,\ uu_3,\ u_1u_{11},\ u_1u_{12},\ u_1u_{13},\ uu_1.$$

So, $G$ has a strong edge-coloring with $21$ colors. It is a contradiction.
 \end{proof}
 \end{quote}

\medskip

We now color the edges of $G$ by assigning the edges in $G[L]$ and $G[R]$ the same colors as in $G_L$ and $G_R$, respectively.  Observe that this yields a partial strong edge-coloring of $G$ in which the only uncolored edges are incident with vertices in $M \subseteq A \cup \{x,u,v,w\}$.  Recall that $u_1, v_1 \in M$.
We next color $xy$ with $3$, color $xw$ and $u_1u_{13}$ with $d$, and color $ww_1, ww_2, ww_3$ with $d_1,d_2,d_3$, respectively. We assign $u_2u_{2j}, u_3u_{3j}, v_2v_{2j}, v_3v_{3j}$ for $j\in[3]$ with available colors if they are not colored yet.  Finally, we color the remaining edges in the order:
$$uu_2,\ uu_3, \ u_1u_{11}, \ u_1u_{12},\ xu,\ xv,\ vv_2,\ vv_3, \ v_1v_{11},\ v_1v_{12},\ v_1v_{13}, \ vv_1,\ uu_1.$$
With the claim, it is easy to check that each edge has an available color. So, $G$ has a strong edge-coloring with $21$ colors. It is a contradiction.
\end{proof}

%{\bf Remark 3:} By Lemma \ref{le8}, there exists some vertex $z_i\in M$ such that there are at least three edges incident with $z_{i2}$ or $z_{i3}$ in $G[R]$. Then there is at least one vertex, say $z_{i3}$, such that three edges incident with $z_{i3}$ are in $G[R]$, or at most two edges incident with $z_{ij}$ are in $G[R]$ for each $j=2,3$.

\begin{Lemma}
\label{le9}
If there exists $z_i \in M \cap A$ such that at least three of the eight edges incident with $z_{i2}$ and $z_{i3}$ are in $G[R]$, then $z_j \notin R$ for all $j \in [3]$.  In particular, $w_1$ is not such a vertex in $M \cap A$.
\end{Lemma}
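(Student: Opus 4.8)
The plan is to argue by contradiction in the same spirit as Lemmas~\ref{le40.1} and~\ref{le8}: assume there is a vertex $z_i\in M\cap A$ with at least three of the eight edges incident with $z_{i2}, z_{i3}$ lying in $G[R]$, and also $z_j\in R$ for some $j\in[3]$, and then produce a strong edge-coloring of $G$ with $21$ colors. Recall from Remark~1 that $z_iz_{i1}\in F$ (so $z_{i1}\in L$ and its other three edges lie in $G[L]$ by Lemma~\ref{le6}(2)) and $z_iz_{i3}\in E(M,R)$ (so $z_{i3}\in R$). By Lemma~\ref{le40.1}, $z\ne w$ forces $z_1,z_2,z_3$ to be entirely in $L$ or entirely in $R$; since $z_i\in M$ we must have $z=w$, i.e. $z_i$ is one of $w_1$ — but $w_1\notin A\cap M$ would be the desired ``in particular'' — so the bulk of the work is to show $z_j\notin R$ when $z\in\{u,v\}$, and then the $w_1$ statement follows because if $w_1\in M\cap A$ were such a vertex, Lemma~\ref{le40.1} again rules out $w_3\in R$, while the only way for three of the eight edges at $w_{12},w_{13}$ to be in $G[R]$ forces structure that we will contradict.

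First I would set up the two auxiliary graphs. Since $z_j\in R$ for some $j$, Lemma~\ref{le6}(4) gives $z_jz_{jk}\in E(G[R])$ for all $k$, and also $yy_k\in E(G[R])$. As in Lemma~\ref{le21}, $|F|\ge 4$, so there is an edge $aa'\in F$ with $a\in V_F$, $a'\in V_F'$, distinct from $z_iz_{i1}$. Define $G_L$ on vertex set $L$ by adding to $E(G[L])$ a single edge joining $z_{i1}$ to $a'$ (a parallel edge if needed), and $G_R$ on vertex set $R$ by adding to $E(G[R])$ an edge joining $z_{i3}$ to $z_j$ (or to $y$, whichever keeps degrees at most $4$ — note $z_{i3}\in R$ has at most two neighbours in $R$ already, by the ``three of eight edges in $G[R]$'' hypothesis combined with Lemma~\ref{le6}(5), so there is room). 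Both graphs have maximum degree at most $4$, hence by minimality of $G$ each has a strong $21$-edge-coloring. Name the three colors on the $G[L]$-edges at $z_{i1}$ as $1,2,3$, the color on the new edge at $z_{i1}$ as $d$; by renaming in $G_R$, arrange the three colors at $z_{i3}$ within $G[R]$ to be $1,2,3$ and the color of a chosen edge (say $yy_1$ or an edge at $z_j$) to be $d$. Transferring these colors to $E(G[L])\cup E(G[R])\subseteq E(G)$ gives a partial strong coloring $\phi$ whose uncolored edges are exactly those incident with $M\subseteq A\cup\{x,u,v,w\}$.

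Next I would extend $\phi$ greedily. For $z'\in\{u,v,w\}\setminus\{z\}$ and all relevant $i,j$, color $z'_iz'_{ij}$ and $z'z'_i$ with available colors — each such edge sees at least four still-uncolored edges incident with $M$, so this succeeds; call the result $\rho$. Now the only uncolored edges are $zz_1,zz_2,zz_3,xz,xz',xz'',xy$ (with $z_i=z_1$ after relabelling, say), and the edges at $z_1$ other than $zz_1$ carry $1,2,3$, the edges at $z_3$ other than $zz_3$ carry $1,2,3$, and one edge at $z_3$ (the new one) — wait, more carefully: the edges at $z_{i1}$ and at $z_{i3}$ both show the palette $\{1,2,3\}$, and $d$ appears near $z_{i3}$. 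The final coloring then splits on whether $d$ already occurs on an edge incident with $z_2$: if it does, color in the order $xz',xz'',xy,zz_1,zz_3,zz_2,xz$; if not, color $zz_1$ with $d$ first and then $xz',xz'',xy,zz_3,zz_2,xz$. In either branch $xw$ sees four pairs of edges colored $1,2,3,d$ so has a free color, and every earlier edge in the sequence sees at most $20$ colors. This yields a strong $21$-edge-coloring of $G$, the desired contradiction.

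The main obstacle I expect is the bookkeeping that guarantees each edge in the final ordering genuinely has a free color — in particular verifying that $xz,xz',xz'',xy$ see at most $20$ distinct colors and that the ``$d$ occurs at $z_2$ vs.\ not'' dichotomy is exhaustive and each case closes; this is exactly the kind of case split carried out in Lemma~\ref{le40.1}, so I would model the argument closely on that proof. A secondary subtlety is confirming that the added edges in $G_R$ (and $G_L$) do not push any degree above $4$: this needs the hypothesis ``at least three of the eight edges at $z_{i2},z_{i3}$ are in $G[R]$'' together with Lemma~\ref{le6}(5)–(6) to bound the number of $R$-neighbours of $z_{i3}$ and $z_{i2}$, and for the $w_1$ case one additionally invokes Lemma~\ref{le40.1} to locate $w_3$. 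Once the degree conditions and the final greedy ordering are checked, the ``in particular $w_1$ is not such a vertex'' clause drops out by applying the main statement with $z_i=w_1$ and observing Lemma~\ref{le40.1} forbids the required configuration.
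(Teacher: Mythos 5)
Your overall strategy --- split into $G_L$ and $G_R$, engineer repeated colors $1,2,3,d$ near $z_{i1}$ and near $z_{i2},z_{i3}$, transfer the colorings, and finish greedily with a case split on where $d$ appears --- is the paper's approach. But the execution has two genuine gaps. First, your opening deduction is wrong: Lemma~\ref{le40.1} only forbids the pattern ``$z_j\in L$, $z_k\in R$, third neighbor in $L\cup R$''; it says nothing when one neighbor lies in $M$, so the hypothesis ``$z_i\in M$ and $z_j\in R$'' is not excluded by it and certainly does not force $z=w$. (The ``in particular'' clause needs no extra work of the kind you describe: since $w_2,w_3\in R$, the conclusion $w_j\notin R$ for all $j$ simply cannot hold for $z=w$, so $w_1$ cannot be such a vertex.)

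Second, and more seriously, you transplant the final coloring order from Lemma~\ref{le40.1}, but the configuration here is different. In Lemma~\ref{le40.1} the relevant neighbor of $z$ lies in $L$, so its three other edges are pre-colored $1,2,3$ and are seen by $xz$; here $z_i\in M$, so the edges $z_iz_{i1},z_iz_{i2},z_iz_{i3}$ are themselves uncolored and are missing from your final sequence, and the repeated colors now sit on edges incident with $z_{i1},z_{i2},z_{i3}$ --- these lie in $N(zz_i)$ but \emph{not} in $N(xz)$ (by girth $6$, no neighbor of $z_{i1}$ is adjacent to $x$ or $z$). Hence your claim that the last edge $xz$ ``sees four pairs of edges colored $1,2,3,d$'' fails, and $xz$ may see $24$ distinct colors. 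The sequence must instead end with $zz_i$, with $xz$ placed early while at least four edges of $N(xz)$ are still uncolored, which is what the paper does. Two smaller points: you cannot assume $z_{i3}$ itself carries three edges of $G[R]$ --- the three guaranteed edges may be split between $z_{i2}$ and $z_{i3}$, which is why the paper adds the edge $z_{i3}z_{i2}$ to $G_R$ in that case, forcing the three chosen edges to receive distinct colors that can be renamed $1,2,3$; and placing $d$ on $yy_1$ is useless here, since $yy_1\notin N(zz_i)$ --- the color $d$ must be placed on an edge incident with $z_j$.
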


%\begin{Lemma}
%\label{le9}
%There exists no vertex $z\in \{u,v,w\}$ such that $z_i\in M, z_j\in R$ with $i, j\in [3]$,  and  at least three of eight the edges incident with $z_{i2}$ and $z_{i3}$ are in $G[R]$.  In particular, $w_1$ cannot be such a vertex. %Suppose that $z_i\in M\cap A$ and at least 3 edges incident with $z_{i2}$ or $z_{i3}$ are in $G[R]$. Then $z_i\neq w_1$.
%\end{Lemma}

\begin{proof}
Suppose otherwise that for some $z_i \in M \cap A$, at least three of the eight edges incident with $z_{i2}$ and $z_{i3}$ are in $G[R]$ and $z_j\in R$.  Without loss of generality, we may assume that $i = 1$ and that $z_3 \in R$.   Recall that by our convention in Remark 1, $z_1z_{11}\in F$, $z_1z_{13} \in E(M,R)$, and by Lemma \ref{le6}(2), three edges incident with $z_{11}$ are in $G[L]$.

By Lemma \ref{le21}, $|F|\geq4$. It follows that at least three edges other than $z_1z_{11}$ are in $F$. Assume that $aa'$ is such an edge with $a\in V_F$ and $a'\in V_F'$. Consider the graph $G_L$:

\begin{center} $V(G_L)=L$ and $E(G_L)=E(G[L])\cup\{z_{11}a'\}$,\end{center}

where if $z_{11}a'$ already exists, then we add a parallel edge with endpoints $z_{11}$ and $a'$.
Observe that $\Delta(G_L) \le 4$.
By the minimality of $G$, $G_L$ has a strong edge-coloring with 21 colors.  In $G_L$, let the colors of the three edges incident with $z_{11}$ (other than the new copy of $z_{11}a'$) be $1,2,3$, and the color of new copy of $z_{11}a'$ be $d$, respectively.

 We may assume that either $z_{13}$ is incident with three edges in $G[R]$, or both $z_{12}$ and $z_{13}$ are incident with at most two edges in $G[R]$.   Consider $G_R$ with $V(G_R)=R$ and
 \begin{center}
 $E(G_R)=\begin{cases}
 		E(G[R])\cup\{z_{13}z_3\}, &\text{ if $z_{13}$ is incident with three edges in $G[R]$;}\\
		E(G[R])\cup\{z_{13}z_{12}, z_{13}z_3\}, & \text{ otherwise.}
		\end{cases}$
\end{center}

Observe that $\Delta(G_R) \le 4$.
By the minimality of $G$, $G_R$ have a strong edge-coloring with $21$ colors. In $G_R$,  let the colors of any three edges in $G[R]$ incident with $z_{12}, z_{13}$ be $1,2,3$, respectively. By Lemma \ref{le6}(4), $z_3$ is incident with three edges in $G[R]$.  So we may assume that one of them, say $z_3z_{31}$, is colored with $d$ (up to renaming it), which is possible even if $z_{12}$ is incident with less than three edges in $G[R]$. % \red{Since at most two edges incident with $z_{12}$ are in $G[R]$, there exists one edge adjacent to $z_2$ in $G[R]$, say $z_2z_{21}$, such that we can color it with $d$.  YU: is this true? Maybe both $z_{12}, z_{13}$ are incident with three edges in $G[R]$.}

Now we color the edges in $G$.  First of all, the edges in $G[L]$ and $G[R]$ keep their colors in $G_L$ and $G_R$.  For $z'\in \{u,v, w\}-z$, we assign $z'_iz'_{ij}$ for $i,j\in[3]$ with an available color if it is not colored yet, and then assign $z'z'_i$ for $i\in [3]$ with an available color. We then color the edges $z_2z_{2j}$ for $j \in [3]$ with an available color (note that the edges $z_3z_{3j}$ are colored).  Finally, we color the remaining edges according to whether the color $d$ appears on the edges incident with $z_{12}$ (let $\{u,v,w\}-z=\{z', z''\}$):
\begin{itemize}
\item If the color $d$ does not appear at the edges incident with $z_{12}$, then color $z_1z_{11}$ with $d$, and color the remaining edges in the following order
    $$xz', \ xz'', \ xy, \ xz, \ zz_2, \ zz_3, \ z_1z_{12},\  z_ 1z_{13},\ zz _1.$$

\item If the color $d$ appears at an edge incident with $z_{12}$, then color the remaining edges in the following order:
 $$xz', \ xz'', \ xy, \ xz,  \  zz_2, \ zz_3,\ z_1z_{11},\ z_1z_{12},\ z_1z_{13},\ zz_1.$$
\end{itemize}
 So, $G$ has a strong edge-coloring with $21$ colors. It is a contradiction.
\end{proof}

%{\bf Remark 4:} By the proof of Lemma \ref{le9}, there exists no vertex $z\in \{u,v,w\}$ such that $z_i\in M$, $z_j\in R$  and  at least 3 edges adjacent to $z_{i2}$ or $z_{i3}$ are in $G[R]$ for $i\neq j, i,j\in [3]$.

We are now ready to finish the proof of Theorem~\ref{thm:main}.

\begin{proof}[Proof of Theorem \ref{thm:main}.] By Lemmas \ref{le8} and \ref{le9}, there exists some vertex $z_i\in (M \cap A)\setminus\{w_1\}$ such that at least three of the eight edges incident with $z_{i2}$ and $z_{i3}$ are in $G[R]$.  Without loss of generality, we will assume $z_i = u_1$.  Recall that from Remark 1, we will assume $u_1u_{11} \in F$ and $u_1u_{13} \in E(M,R)$.  Thus, by Lemma \ref{le6}(2) and (5) three edges incident with $u_{11}$ are in $G[L]$ and there is at least one edge incident with $u_{13}$ in $G[R]$.  Note that by Lemma~\ref{le9}, $u_2,u_3\notin R$. So, we consider the following cases.

 \medskip
 {\bf Case 1.} $u_2\in L$ or $u_3\in L$.  Without loss of generality, let $u_2\in L$.

By Lemma \ref{le6}(3), $u_2u_{2j}$ is in $G[L]$ for each $j \in [3]$.
 We consider graph $G_L$:

\begin{center}  $V(G_L)=L$ and $E(G_L)=E(G[L])\cup\{u_{11}u_2\}$.\end{center}

Observe that $\Delta (G_L)\leq4$. By the minimality of $G$, $G_L$ has a strong edge-coloring with 21 colors.  In $G_L$, let the colors of the three edges in $G[L]$ incident with $u_{11}$ be $1,2,3$, and the color of $u_2u_{21}$ be $d$, respectively.

We may assume that either $u_{13}$ is incident with three edges in $G[R]$, or both $u_{12}$ and $u_{13}$ are incident with at most two edges in $G[R]$.   Consider $G_R$ with $V(G_R)=R$ and
 \begin{center}
 $E(G_R)=\begin{cases}
 		E(G[R])\cup\{u_{13}y\}, &\text{ if $u_{13}$ is incident with three edges in $G[R]$;}\\
		E(G[R])\cup\{u_{13}u_{12}, u_{13}y\}, & \text{ otherwise.}
		\end{cases}$
\end{center}

Observe that $\Delta (G_R)\leq4$. By the minimality of $G$, $G_R$ has a strong edge-coloring with 21 colors. In $G_R$, by renaming colors, let the colors of (any) three edges incident with $u_{13}, u_{12}$ in $G[R]$ be $1,2,3$, and the color of $u_{13}y$ be $d$, respectively.

Now we color the edges in $G$, where the edges in $G[L]$ and $G[R]$ keep their colors in $G_L$ and $G_R$. % We first color $xy$ with $d$.
 We then assign $v_iv_{ij}, w_iw_{ij}, u_3u_{3j}$ for $i,j\in[3]$ with an available color if it is not colored yet, and assign $vv_i, ww_i$ for $i\in [3]$ with available colors.    Finally, we color the remaining edges according to whether the color $d$ appears on the edges incident with $u_{12}$:
\begin{itemize}
\item If the color $d$ does not appear at the edges incident with $u_{12}$, then color $u_1u_{13}$ with $d$, and color the remaining edges in the following order:
$$xv, \ xw, \ xy, \ xu, \ uu_2, \ uu_3, \ u_1u_{11}, \ u_1u_{12}, \ uu_1.$$
\item  If the color $d$ appears at an edge incident with $u_{12}$, then color the remaining edges in the following order:
$$xv, \ xw, \ xy, \ xu, \ uu_2, \ uu_3, \ u_1u_{11}, \ u_1u_{12}, \ u_1u_{13}, \ uu_1.$$
\end{itemize}
In either case, $G$ has a strong edge-coloring with $21$ colors. It is a contradiction.
 \medskip

 {\bf Case 2.} $u_2,u_3\in M$. %(MMM)

By Lemma~\ref{le6},  we have $u_1u_{11},u_2u_{21},u_3u_{31}\in F$ and $u_1u_{13},u_2u_{23},u_3u_{33}\in E(M,R)$.

{\bf Case 2.1.}  For some $i,j\in [3]$ with $i\not=j$, $u_iu_{i2}\in F$ and $u_ju_{j2}\in E(M,R)$.

 Assume that $u_1u_{12}\in F$ and $u_2u_{22}\in E(M,R)$. Consider two graphs $G_L$ and $G_R$ as follows:

\begin{center} $V(G_L)=L\cup \{a\}$ and $E(G_L)=E(G[L])\cup\{u_{11}a,u_{12}a,u_{21}a,u_{31}a\}$;\end{center}
\begin{center} $V(G_R)=R\cup \{b\}$ and $E(G_R)=E(G[R])\cup\{u_{13}b,u_{22}b,u_{23}b,u_{33}b\}$.\end{center}

Observe that $\Delta (G_L)\leq4$ and $\Delta (G_R)\leq4$. By the minimality of $G$, both $G_L$ and $G_R$ can be colored with 21 colors. Let the colors of $au_{11},au_{12},au_{21}$ be $1,2,3$, respectively.  We rename colors of edges in $G_R$ so that the colors of $bu_{23}, bu_{22}, bu_{13}$ are $1,2,3$, respectively.  We further assume that an edge incident with $u_{31}$ (other than $u_{31}a$) and an edge incident with $u_{33}$ (other than $u_{33}b$) have the same color, say $d$.  Clearly, $d\not=1,2,3$.

Now we color the edges in $G$.  First of all, the edges in $G[L]$ and $G[R]$ keep their colors in $G_L$ and $G_R$.   Then we color $u_1u_{11}$ and $u_2u_{23}$ with $1$,  color $u_1u_{12}$ and $u_2u_{22}$ with $2$, and color $u_1u_{13}$ and $u_2u_{21}$ with $3$.  We assign $v_iv_{ij}, w_iw_{ij}$ for $i,j\in[3]$ with available colors if they are not colored yet, and assign $vv_i, ww_i$ for $i\in [3]$ with available colors.  Finally,  we color the remaining edges in the  order:
$$xv,\ xw,\ xy,\ xu,\ u_3u_{31},\ u_3u_{32},\ u_3u_{33},\ uu_1,\ uu_2,\  uu_3.$$

So, $G$ has a strong edge-coloring with $21$ colors. It is a contradiction.

% Assume that $u_1u_{12}\in E(M,R)$ and $u_2u_{22}\in F$. It is symmetry with above.

 \medskip

{\bf Case 2.2.} $u_1u_{12},u_2u_{22}\in F$, or $u_1u_{12},u_2u_{22}\in E(M,R)$.

If $u_1u_{12},u_2u_{22}\in F$,  then clearly, the three edges incident with $u_{13}$ other than $u_1u_{13}$ are in $G[R]$.  Consider two graphs $G_L$ and $G_R$ as follows:

\begin{center} $V(G_L)=L\cup \{a\}$ and $E(G_L)=E(G[L])\cup\{u_{11}a, u_{12}a,u_{21}a,u_{22}a\}$;\end{center}
\begin{center}$V(G_R)=R$ and $E(G_R)=E(G[R])\cup\{u_{13}u_{23}\}$. \end{center}

Observe that $\Delta (G_L)\leq4$ and $\Delta (G_R)\leq4$. By the minimality of $G$, both $G_L$ and $G_R$ have strong edge-colorings with 21 colors. In $G_L$,  let the colors of the three edges incident with $u_{11}$ in $G[L]$ be $1,2,3$, and the color of $u_{11}a$ be $d$, respectively. In $G_R$, by renaming the colors of edges in $G_R$,  let the colors of the three edges incident with $u_{13}$ in $G[R]$ be $1,2,3$, and the color of $u_{13}u_{23}$ be $d$, respectively.

If $u_1u_{12},u_2u_{22}\in E(M,R)$,  consider $G_L$ with $V(G_L)=L$ and $G_R$ with $V(G_R)=R\cup\{b\}$ and

\begin{center} $E(G_L)=E(G[L])\cup\{u_{11}u_{21}\}$;\end{center}

\begin{center}
$E(G_R)=\begin{cases}
	E(G[R])\cup\{u_{12}b,u_{13}b,u_{22}b,u_{23}b\},  \text{ if $u_{13}$ is incident with three edges in $G[R]$;}\\
	E(G[R])\cup\{u_{12}b,u_{13}b,u_{22}b,u_{23}b,u_{12}u_{13}\},  \text{ otherwise.}
	\end{cases}$
	\end{center}

Observe that $\Delta (G_L)\leq4$ and $\Delta (G_R)\leq4$. By the minimality of $G$, both $G_L$ and $G_R$ have strong edge-colorings with 21 colors. In $G_L$,  let the colors of the three edges incident with $u_{11}$ in $G[L]$ be $1,2,3$, and the color of $u_{11}u_{21}$ be $d$, respectively. In $G_R$, let the colors on any three edges in $G[R]$ incident with $u_{12},u_{13}$ be $1,2,3$, and the color of $u_{23}b$ be $d$, respectively.

In either case,  we color the edges in $G$ in the following procedure.  First of all, the edges in $G[L]$ and $G[R]$ keep their colors in $G_L$ and $G_R$. Next, we color $u_1u_{11}$ and $u_2u_{23}$ with $d$, and  assign $v_iv_{ij}, w_iw_{ij}$ for $i,j\in[3]$ with available colors if they are not colored yet, and assign $vv_i, ww_i$ for $i\in [3]$ with available colors. Finally, we color the remaining edges in the following order:
$$xv, xw, xy,\  u_2u_{21},\  u_2u_{22},\  u_3u_{31},\ u_3u_{32},\ u_3u_{33},\  xu,\  uu_2,\  uu_3,\ u_1u_{12},\  u_1u_{13},  uu_1.$$

So, $G$ has a strong edge-coloring with $21$ colors. It is a contradiction.
\end{proof}

\section{Proof of Lemma~\ref{le2}}\label{proof-le2}

In this section, we proof Lemma~\ref{le2} in a series of lemmas.  In these proofs, we will often remove vertices and edges from $G$ to obtain a strong edge-coloring, say $\phi$, of the remaining graph that use at most 21 colors. Often, we will consider $|A_\phi(e)|$ for each uncolored edge $e$ of $G$ with the purpose of applying the well-known result of Hall \cite{Hall} in terms of systems of distinct representatives.  This yields a coloring of the remaining uncolored edges such that they will receive distinct colors, which ultimately produces a strong edge-coloring of $G$.  Thus, when in a situation in which we can apply this result of Hall, we will say that we obtain a strong edge-coloring of $G$ by SDR.

Let $\sU_\phi(v)$ to be the set of colors used on edges incident with a vertex $v$.  For adjacent vertices $u$ and $v$, let $\Upsilon_\phi(u,v)$ be $\sU_\phi(u)\setminus\{\phi(uv)\}$.  That is, $\Upsilon_\phi(u,v)$ is the set of colors used on the edges incident with $u$ other than $uv$.  Observe that $\Upsilon_\phi(u,v)$ and $\Upsilon_\phi(v,u)$ are disjoint.  Often, we will refer to only one partial strong edge-coloring that will not be named.  In such cases we will suppress the subscripts used in the above notations.

\begin{lemma}\label{lem:simple}
$G$ has no multiple edges.  That is, $G$ is simple.
\end{lemma}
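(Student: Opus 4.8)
The statement to prove is that a minimal counterexample $G$ has no multiple edges. The plan is to suppose for contradiction that $G$ contains a pair of parallel edges $e_1 = uv$ and $e_2 = uv$, delete them (and possibly some incident vertices), apply minimality to color what remains, and then show there are enough available colors to finish, using an SDR argument where necessary. Since $G$ is $4$-regular by Lemma~\ref{le1}, each of $u$ and $v$ has two further neighbors; write $N(u) = \{v, v, u_1, u_2\}$ and $N(v) = \{u, u, v_1, v_2\}$ (with the understanding that some of $u_1, u_2, v_1, v_2$ may coincide or be joined by further parallel edges --- those sub-possibilities will need a quick separate check, or can be ruled out by Lemma~\ref{le21} since two parallel edges already form a $2$-edge-cut separating $\{u,v\}$ only if $u,v$ have no other neighbors, which is impossible in a $4$-regular graph on more than two vertices).

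\textbf{Key steps.} First I would delete the edges $e_1, e_2$ from $G$; the resulting graph $G'$ has $\Delta(G') \le 4$ and is smaller, so by minimality it has a strong edge-coloring $\phi$ with at most $21$ colors. Now only $e_1$ and $e_2$ are uncolored. The edge $e_1$ sees, in $G$, the edges $uu_1, uu_2, vv_1, vv_2$ at distance one, together with the at most $8$ edges at distance two incident with $u_1, u_2, v_1, v_2$ --- at most $12$ edges total (not $24$, because $u$ and $v$ are joined only to each other and to these four vertices, and the ``other'' parallel edge $e_2$ is itself uncolored). Hence $|A_\phi(e_1)| \ge 21 - 12 = 9$, and likewise $|A_\phi(e_2)| \ge 9$. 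Since $e_1$ and $e_2$ see each other, I need them to get distinct colors, but with $9$ choices each this is trivial: pick any color for $e_1$ from its list, then any remaining color for $e_2$. This already yields a strong edge-coloring of $G$ with $21$ colors, contradicting that $G$ is a counterexample. (Formally one can phrase the last step as an SDR / Hall argument on the two uncolored edges, but the counts are so generous that the bipartite condition is immediate.)

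\textbf{Main obstacle.} The genuinely delicate part is not the color count but the \emph{degenerate configurations}: the four vertices $u_1, u_2, v_1, v_2$ need not be distinct, and $G'$ could in principle have had further multiplicities created or exposed by the deletion. I would handle this by case analysis on how many of $u_1, u_2, v_1, v_2$ coincide and whether any of them is joined to $u$ or $v$ by yet another parallel edge; in each such case the number of distinct edges that $e_1$ sees only \emph{drops}, so the available-color bound only improves, and a short argument (or an appeal to Lemma~\ref{le21} to rule out the small edge-cuts these configurations would force) closes it. So the real work is bookkeeping over these sub-cases rather than any subtle coloring idea; the coloring extension itself is forced by the crude bound $21 - |N(e) \cap E(G')| \ge 1$ once one observes $|N(e_1)| \le 12$.
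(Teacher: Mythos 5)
There is a genuine gap, and it is not in the color counting but in the very first step. You delete \emph{both} parallel edges $e_1,e_2$ and take a strong edge-coloring $\phi$ of $G'=G-\{e_1,e_2\}$ by minimality, then assert that ``only $e_1$ and $e_2$ are uncolored,'' implicitly treating $\phi$ as a good partial coloring of $G$. But it need not be one: two edges $uu_1$ and $vv_1$ see each other \emph{in $G$} because they are both incident with the common edge $uv$, yet once both copies of $uv$ are removed they may be at distance more than two in $G'$ (e.g., if $G$ is locally tree-like away from the multi-edge), so $\phi$ may legitimately assign them the same color. Re-inserting and coloring $e_1,e_2$ then produces a coloring of $G$ in which $uu_1$ and $vv_1$ share a color while seeing each other --- not a strong edge-coloring. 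Your proposal never addresses this, and no amount of available colors on $e_1,e_2$ repairs it.

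The paper's proof avoids exactly this trap by deleting only \emph{one} copy $e$ of the parallel edge: the surviving copy $e'=uv$ keeps every pair of edges at $u$ and at $v$ mutually visible in $G-e$, so the inherited coloring is automatically a good partial coloring of $G$, and the single uncolored edge $e$ has at least $21-|N(e)|\ge 1$ colors available (the multiplicity makes $|N(e)|$ well below $24$, so in fact several colors remain). Your argument can be repaired the same way --- keep one copy of $uv$ in the graph you color by minimality --- after which your bookkeeping about coincidences among $u_1,u_2,v_1,v_2$ becomes unnecessary, since only one edge remains to be colored. As written, though, the proposal is not correct.
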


\begin{proof}
Suppose on the contrary that there exists a parallel edge $e$ with endpoints $u,v$.  By the minimality of $G$, $G - e$ has a good coloring.  Since $e$ has at least five colors available, we can obtain a good coloring of $G$.
\end{proof}

\begin{lemma}\label{lem:NoTriangle}
$G$ contains no triangles.
\end{lemma}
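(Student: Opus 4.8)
The plan is to argue by contradiction using the minimality of $G$. Suppose $G$ contains a triangle on vertices $u, v, w$. By Lemma~\ref{lem:simple} and Lemma~\ref{le1}, $G$ is simple and $4$-regular, so each of $u, v, w$ has exactly two neighbors outside the triangle; denote these $N(u)\setminus\{v,w\} = \{u_1, u_2\}$, and similarly $\{v_1, v_2\}$ and $\{w_1, w_2\}$ (some of these six vertices may coincide, though Lemma~\ref{le21} on edge cuts and simplicity will restrict this). First I would delete a carefully chosen subset of the triangle edges and the six pendant edges — most naturally all three triangle edges $uv, vw, wu$, or perhaps the triangle edges together with some incident edges — to obtain a smaller graph $G'$. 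By the minimality of $G$, $G'$ has a strong edge-coloring $\phi$ with at most $21$ colors. I then want to show the deleted edges can be colored back in: for each uncolored edge $e$, bound $|A_\phi(e)|$ from below and verify a Hall-type condition so that an SDR exists (as described in the paragraph preceding Lemma~\ref{lem:simple}).

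The key counting step is as follows. Each triangle edge, say $uv$, sees: the other two triangle edges, the four pendant edges at $u$ and $v$ (i.e. $uu_1, uu_2, vv_1, vv_2$), and the edges at distance two, namely those incident with $u_1, u_2, v_1, v_2$ and with $w$. In a $4$-regular simple graph of girth $\ge 4$ (we may use Lemma~\ref{lem:simple} and the fact that multi-edges are excluded, plus whatever local structure we can extract), one counts that $N(uv)$ contains at most roughly $2 + 4 + (3+3+3+3) + 2 = $ a number comfortably below $21$ once the triangle structure forces overlaps among these edges — in particular the three triangle edges mutually see each other, and the edge $ww_1$ is seen by all three triangle edges, producing repeated colors. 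The heart of the argument is to show that after removing the triangle edges, each of them sees at most $20$ distinct colors (equivalently has at least one free color), or more precisely that the three uncolored triangle edges can be colored: $uv, vw, wu$ each lose a color from their neighborhood and, because they form a triangle, their mutual constraints are only three pairwise conflicts, so a greedy or SDR argument over just three edges with at least (say) $2$ available colors each suffices.

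I expect the main obstacle to be the bookkeeping when the outer vertices $u_1, u_2, v_1, v_2, w_1, w_2$ are not all distinct or when they induce extra adjacencies — these coincidences can shrink $N(e)$ (helping us) but also create short cycles through the triangle that must be handled, and one must be careful that $G'$ really is a strictly smaller graph with maximum degree at most $4$ (deleting edges keeps degrees bounded, so this is fine, but if we instead contract or identify vertices we must check multiedges don't push a degree above $4$). A secondary subtlety is ensuring the Hall condition for the SDR: it is not enough that each deleted edge individually has a free color; we need every sub-collection of the deleted edges to have a large enough union of available colors. Since only a bounded number of edges (at most nine, or just the three triangle edges in the cleanest version) are recolored and each has several free colors while the conflict graph among them is sparse, verifying Hall's condition should be routine, but it is the step where the precise removal set must be chosen so the inequality goes through. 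The cleanest route is probably: delete only the three triangle edges, show each sees at most $20$ colors under $\phi$ using the triangle-induced repetitions, and then color $uv, vw, wu$ in order, checking that after coloring the first one the second still has a free color and likewise the third — a short, explicit finish.
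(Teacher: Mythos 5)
Your proposal is correct and follows essentially the same route as the paper: by minimality, color a graph obtained by deleting the triangle locally, then restore the deleted edges via a counting/greedy/SDR argument. The paper deletes the three triangle \emph{vertices} and SDR-colors the nine resulting uncolored edges (using $|A(x_iu_i)|\ge 6$ and $|A(u_ju_{j+1})|\ge 9$), whereas your cleanest variant deletes only the three triangle \emph{edges} and finishes greedily, since each triangle edge sees at most $20$ edges, two of which are uncolored, leaving at least three free colors apiece for three mutually conflicting edges.
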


\begin{proof}
Suppose on the contrary that $G$ contains a triangle $u_1,u_2,u_3$.  Since $G$ is 4-regular, there exist $x_i,y_i \in N(u_i)\setminus\{u_1,u_2,u_3\}$.  By the minimality of $G$, $G - \{u_1,u_2,u_3\}$ has a good coloring.

Observe that $|A(x_iu_i)|, |A(y_iu_i)| \ge 6$ for $i \in [3]$ and $|A(u_ju_{j+1})| \ge 9$ for $j \in [3]$ modulo 3.  Thus, we obtain a good coloring of $G$ by SDR.
\end{proof}

\begin{lemma}\label{lem:NoK_3,3}
$G$ contains no $K_{3,3}$.
\end{lemma}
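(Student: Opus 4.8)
The plan is to mimic the proofs of Lemmas~\ref{lem:NoTriangle} and the earlier reducibility arguments: delete one side of the putative $K_{3,3}$, invoke minimality of $G$ to colour what remains, and then re-insert the deleted edges greedily or by SDR. Concretely, suppose $G$ contains a $K_{3,3}$ with parts $X=\{x_1,x_2,x_3\}$ and $Y=\{y_1,y_2,y_3\}$. Since $G$ is $4$-regular (Lemma~\ref{le1}) and has girth at least... wait, Lemma~\ref{le2} is not yet available here; but we do have no triangles (Lemma~\ref{lem:NoTriangle}) and simplicity (Lemma~\ref{lem:simple}), which is enough to know that each vertex of $X\cup Y$ has exactly one neighbour outside the $K_{3,3}$. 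Call these outside neighbours $x_i'$ (for $x_i$) and $y_j'$ (for $y_j$); these six vertices need not be distinct, but that will not matter.

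First I would delete the six vertices $X\cup Y$ from $G$ and apply minimality to get a good colouring $\phi$ of $G-(X\cup Y)$ with at most $21$ colours. The uncoloured edges are the nine matching edges $x_iy_j$ of the $K_{3,3}$ together with the six pendant edges $x_ix_i'$ and $y_jy_j'$. Next I would estimate $|A_\phi(e)|$ for each uncoloured edge $e$. A pendant edge such as $x_ix_i'$ sees: the three other edges at $x_i'$ (all of which could be coloured, costing $\le 3$), the three edges $x_iy_j$ at $x_i$ (uncoloured), and edges within distance two; the only coloured edges it can see are those incident with $x_i'$ and those incident with neighbours of $x_i'$, for a total of at most $3+3\cdot 3=12$, so $|A_\phi(x_ix_i')|\ge 21-12=9$. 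A matching edge $x_iy_j$ sees the four edges at $x_i$ other than itself ($2$ matching edges $+$ the pendant $x_ix_i'$) and the four at $y_j$ similarly, plus second-neighbourhood edges through $x_i'$ and $y_j'$; among these only the $\le 3$ edges at $x_i'$ and $\le 3$ at $y_j'$ are coloured, so $|A_\phi(x_iy_j)|\ge 21-6=15$. Then I would order the edges so that at each step the number of already-chosen distinct colours it sees is strictly below its availability — colour the pendant edges last is fine, or simply verify Hall's condition on the whole family of uncoloured edges and finish by SDR, exactly as described in the paragraph preceding Lemma~\ref{lem:simple}.

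The main obstacle I anticipate is bookkeeping around the non-distinctness of the outside neighbours $x_i',y_j'$: if, say, $x_1'=x_2'$, then the pendant edges $x_1x_1'$ and $x_2x_2'$ see each other and share the coloured edges at that common vertex, so the naive counts must be done carefully to make sure the SDR/greedy bound still clears. I expect the slack is comfortable — availabilities of $9$ and $15$ against at most eight other uncoloured edges in each neighbourhood — but the write-up needs to treat the worst coincidence pattern (e.g.\ all of $X$ sharing one outside neighbour, or an $x_i'$ equal to a $y_j'$, which a careful girth-free argument must also rule out or absorb). One clean way to sidestep the coincidences is to observe that $x_i'\notin Y$ and $y_j'\notin X$ (else a triangle or a multiedge would appear), so all coloured edges in question lie in $G-(X\cup Y)$ and the stated lower bounds on $|A_\phi(e)|$ hold regardless of how the primed vertices coincide; then a single application of Hall completes the proof and yields the contradiction with $G$ being a counterexample.
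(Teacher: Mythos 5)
Your proposal is correct and follows essentially the same route as the paper: delete the six vertices of the $K_{3,3}$, colour the rest by minimality, observe that the six pendant edges each have at least $9$ available colours and the nine $K_{3,3}$ edges each have at least $15$, and finish by SDR. The extra care you take about coincidences among the outside neighbours is harmless but not needed, since (as you note) all coloured edges seen by an uncoloured edge lie in $G-(X\cup Y)$ and the availability bounds hold regardless.
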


\begin{proof}
Suppose on the contrary that $G$ contains $K_{3,3}$ as a subgraph with partite sets $\{u_1,u_2, u_3\}$ and $\{v_1,v_2,v_3\}$.  For $i \in [3]$, let $x_i$ denote the fourth neighbor of $u_i$ not in $\{v_1,v_2,v_3\}$, and let $y_i$ denote the fourth neighbor of $v_i$ not in $\{u_1,u_2,u_3\}$.

Let $G'$ be obtained from $G$ by removing $u_1,u_2,u_3,v_1,v_2,v_3$.  By the minimality of $G$, $G'$ has a good coloring that we can impose onto $G$.  Observe that $|A(u_ix_i)|, |A(v_iy_i)| \ge 9$ for $i \in [3]$, and $|A(u_jv_\ell)| \ge 15$ for $1 \le j \le \ell \le 3$.  We then obtain a good coloring of $G$ by SDR.
\end{proof}

%New Version 6/14
\begin{lemma}\label{lem:NoK_2,4}
$G$ contains no $K_{2,4}$.
\end{lemma}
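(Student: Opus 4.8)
The plan is to follow exactly the discharging-by-deletion template established in Lemmas~\ref{lem:NoTriangle}--\ref{lem:NoK_3,3}: assume $G$ contains a $K_{2,4}$, delete the six vertices involved, invoke minimality to color the rest, and then show the deleted edges can be put back by SDR. Concretely, suppose the two degree-two-on-one-side vertices are $u_1,u_2$ and the four vertices on the other side are $v_1,v_2,v_3,v_4$, so that each $v_j$ is adjacent to both $u_1$ and $u_2$. Since $G$ is $4$-regular (Lemma~\ref{le1}), $u_1$ has two further neighbors $x_1,x_2$ and $u_2$ has two further neighbors $x_3,x_4$ (all outside $\{v_1,\dots,v_4\}$), and each $v_j$ has exactly two neighbors outside $\{u_1,u_2\}$, call them the ``$y$-neighbors'' of $v_j$. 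Let $G'=G-\{u_1,u_2,v_1,v_2,v_3,v_4\}$; by minimality $G'$ has a strong edge-coloring with $21$ colors, which we impose on the corresponding edges of $G$.

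Next I would count available colors for each of the $12$ uncolored edges: the four pendant-type edges $u_1x_1,u_1x_2,u_2x_3,u_2x_4$, the eight edges from the $v_j$'s to their $y$-neighbors, and the eight ``matching'' edges $u_iv_j$ — wait, the eight edges $u_1v_j, u_2v_j$ for $j\in[4]$. (So it is $4+8+8=20$ uncolored edges.) Each edge $u_iv_j$ lies in a very dense part of $L^2(G)$, but crucially $G$ has girth at least six by no triangle / no $K_{3,3}$-type restrictions — actually we may not invoke Lemma~\ref{le2} here since this lemma is a step toward proving it, so I must only use simplicity (Lemma~\ref{lem:simple}), no triangles (Lemma~\ref{lem:NoTriangle}), no $K_{3,3}$ (Lemma~\ref{lem:NoK_3,3}), and the edge-connectivity bound (Lemma~\ref{le21}). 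The key estimates: an edge $u_iv_j$ sees at most $|N(u_iv_j)|\le 24$ edges, but many of those are among the $20$ uncolored ones, so $|A(u_iv_j)|$ is comfortably large (on the order of $21 - (\text{number of colored edges it sees})$, and it sees at most a handful of colored edges — the colored edges incident to $x$'s and to $y$-neighbors). Similarly $|A(u_1x_1)|$ and the $v_j$-to-$y$ edges will have even more room. The goal is to verify that for every subfamily the Hall condition holds, i.e. the union of available sets over any $k$ of the uncolored edges has size at least $k$; since $k\le 20\le 21$ and each $|A(e)|$ will be shown to be at least, say, $\max\{1,\, 21 - c_e\}$ with the $c_e$ small enough, this is routine once the bookkeeping is set up. A convenient way is to order the $20$ edges so that each edge, at the time it is colored, sees at most $20$ already-colored edges — the standard ``color greedily in a good order'' argument — rather than invoking Hall in full generality.

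The main obstacle is the bookkeeping around coincidences: the $y$-neighbors of distinct $v_j$'s could coincide with each other or with the $x_i$'s, and in the worst identifications a vertex could pick up high degree in $G'$ or an uncolored edge could see more colored edges than a naive count suggests. For instance if $y$-neighbors repeat, some $v_j$--$y$ edge could be forced to share endpoints, creating a short cycle; here I would use Lemma~\ref{lem:simple} and Lemma~\ref{lem:NoTriangle} to rule out the worst cases, and otherwise just check that even with repetitions the available-color counts stay $\ge$ the position of the edge in the chosen order. A second subtlety is that some of the $v_j$--$y$ edges might actually already be colored if a $y$-neighbor coincides with a vertex of $G'$... no, those edges have an endpoint $v_j$ which was deleted, so they are uncolored; but a $y$-neighbor of $v_j$ that equals an $x_i$ forces $u_i x_i$ and that $v_j$--$y$ edge to be incident, reducing both their available sets by one, which must be tracked. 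I expect all of these to resolve with slack, so the heart of the proof is simply exhibiting the ordering and the inequality $|A_\phi(e_t)|\ge t$ (or $\ge 1$ greedily) for the $t$-th edge, after disposing of the degenerate identifications via the earlier structural lemmas.

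Finally, once all $20$ edges are colored consistently, the result is a strong edge-coloring of $G$ with $21$ colors, contradicting that $G$ is a counterexample; hence $G$ contains no $K_{2,4}$.
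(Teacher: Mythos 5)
There is a genuine gap here, plus a structural slip worth flagging first. With your labeling, $u_1$ and $u_2$ are each adjacent to all four of $v_1,\dots,v_4$, so they already have degree $4$ and possess no further neighbors $x_1,\dots,x_4$; after deleting all six vertices the uncolored edges are the $8$ edges $u_iv_j$ together with the $8$ edges from the $v_j$'s to their outside neighbors, i.e.\ $16$ edges, not $20$. (The paper instead deletes only the two vertices of the size-$2$ side, leaving exactly the $8$ edges of the $K_{2,4}$ uncolored.)

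The real gap is your claim that the final coloring step is ``routine'' by greedy ordering or Hall's condition. It is not: the Hall condition can genuinely fail for the eight edges of the $K_{2,4}$, and handling that failure is the entire content of the paper's proof. Count carefully: each edge of the $K_{2,4}$ sees exactly $14$ edges lying outside the $K_{2,4}$ (in your setup, once the eight $v_j$-to-outside edges are colored, these are those $8$ edges plus the at most $6$ colored edges around the two outside neighbors of $v_j$; in the paper's setup they are the $8$ edges from the size-$4$ side to the outside plus the $6$ edges around $x_i,y_i$). So each of the eight mutually-seeing $K_{2,4}$ edges has only $21-14=7$ available colors, and nothing in the counting prevents all eight availability sets from being the \emph{same} $7$-element set. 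In that case no ordering and no system of distinct representatives exists. The paper's proof is devoted precisely to this tight case: it shows the rigidity it forces (the eight outside edges at the size-$4$ side carry eight distinct colors, the remaining six colors all appear around each pair $x_i,y_i$, hence those eight outside neighbors are distinct), uses $4$-regularity to find two nonadjacent ones, uncolors the two corresponding outside edges, and recolors them with a \emph{common} color, thereby freeing an eighth color for the $K_{2,4}$ edges. Your proposal contains no such recoloring/merging idea, and without it the argument cannot be completed; the phrase ``I expect all of these to resolve with slack'' is exactly where the proof breaks down, because there is no slack.
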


\begin{proof}
Suppose on the contrary that $G$ contains $K_{2,4}$ as a subgraph with partite sets $\{u_1,u_2,u_3,u_4\}$ and $\{v_1,v_2\}$.  For $i \in [4]$, let $x_i,y_i$ denote the third and fourth neighbors of $u_i$ not in $\{v_1,v_2\}$.  Of course, $x_i \neq y_i$, and by Lemma \ref{lem:NoTriangle}, $x_i, y_i \notin \{u_1,u_2,u_3,u_4\}$ for $i \in [4]$.  So $|\{x_1,y_1,\dots, x_4,y_4\}| \ge 4$.

Let $G'$ be obtained from $G$ by removing $v_1$ and $v_2$.  By the minimality of $G$, $G'$ has a good coloring that we can impose onto $G$.  Call it $\phi$.   Note that if $e, e' \in E(G')$, and $\phi(e) = \phi(e')$, then they are still sufficiently far apart in $G$.  Thus, $\phi$ is a good partial coloring of $G$.   Observe that $|A_\phi(u_iv_j)| \ge 7$ for $i \in [4], j \in [2]$.

If $|\bigcup\limits_{i \in [4], j \in [2]} A_\phi(u_iv_j)| \ge 8$, then we can greedily color the remaining edges to obtain a good coloring of $G$.  Therefore, since $|A_\phi(u_iv_j)| \ge 7$ for each $i \in [4]$ and $j \in [2]$, we may assume each $A_\phi(u_iv_j) = [7]$.  Observe that this implies each $u_ix_i$ and $u_iy_i$ receives distinct colors.  So without loss of generality, suppose they are colored with the colors from $[15]\setminus[7]$.  Furthermore, we may assume $\Upsilon_\phi(x_i, u_i) \cup \Upsilon_\phi(y_i,u_i) = [21]\setminus[15]$, for each $i \in [4]$, as otherwise $A_\phi(u_iv_j) \neq [7]$ for some $i$ and $j$.  This also implies that $|\{x_1, y_1, \dots, x_4, y_4\}| = 8$; that is, they are all distinct.

Thus, our goal is to recolor two edges among $\{u_ix_i, u_iy_i: i \in [4]\}$ to be the same and obtain a good partial coloring of $G$.  If so, then we can color the remaining edges greedily to obtain a good coloring of $G$.  As a result, if we uncolor an edge $u_ix_i$, then in the resulting good partial coloring, the only colors available on this edge must be contained in $[7] \cup \phi(u_ix_i)$.

Note that by Lemma \ref{le1}, $x_1$ cannot be adjacent to every vertex in $\{x_2, y_2, x_3, y_3, x_4, y_4\}$.  So we may assume $x_1x_2 \notin E(G)$.  Uncolor the edges $u_1x_1$ and $u_2x_2$, and let $\sigma$ be this good partial coloring of $G$.  Since the colors on these edges in $\phi$ were distinct and in $[15]\setminus[7]$, we may assume they were 8 and 9.  Observe that $|A_\sigma(u_ix_i)| \ge 5$ for $i \in [2]$, and $A_\sigma(u_iv_j) = [9]$ for $i \in [4], j \in [2]$.

As noted, $A_\sigma(u_1x_1) \cup A_\sigma(u_2x_2) \subseteq [9]$.  However, since each edge now has at least 5 colors available, there must be some $\alpha \in A_\sigma(u_1x_1) \cup A_\sigma(u_2x_2)$.  Thus, we can color these two edges with $\alpha$ to obtain a coloring $\psi$.  Since $x_1x_2 \notin E(G)$, and since the $x_i$'s and $y_i$'s are all distinct, $\psi$ is a good partial coloring of $G$ in which $|A_\psi(u_iv_j)| \ge 8$ for $i \in [4], j \in [2]$.  Thus, we can greedily color the remaining edges to obtain a good coloring of $G$.
\end{proof}

\begin{lemma}\label{lem:NoK_2,3}
$G$ contains no $K_{2,3}$.
\end{lemma}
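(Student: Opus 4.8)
The plan is to argue by contradiction in the same spirit as the proofs of Lemmas~\ref{lem:NoK_3,3}, \ref{lem:NoK_2,4}, and the earlier structural lemmas: suppose $G$ contains a $K_{2,3}$ with partite sets $\{u_1,u_2,u_3\}$ and $\{v_1,v_2\}$. Since $G$ is $4$-regular (Lemma~\ref{le1}) and has no multiple edges (Lemma~\ref{lem:simple}), no triangles (Lemma~\ref{lem:NoTriangle}), and no $K_{2,4}$ (Lemma~\ref{lem:NoK_2,4}), each $v_j$ has exactly one further neighbor $z_j\notin\{u_1,u_2,u_3\}$, and each $u_i$ has exactly two further neighbors $x_i,y_i$ outside $\{v_1,v_2\}$, with $x_i\neq y_i$ and $x_i,y_i\notin\{u_1,u_2,u_3\}$ (triangle-freeness); moreover $z_1\neq z_2$ would force a $C_4$ which is allowed, but $z_1=z_2$ would create a $K_{2,4}$-free check — I would record these adjacency constraints carefully, together with the fact (from $K_{2,4}$-freeness applied to $\{v_1,v_2\}$ versus $\{u_1,u_2,u_3,z\}$-type configurations) that the $x_i,y_i,z_j$ cannot coincide too much. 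First I would delete $v_1,v_2$ (and possibly $u_1,u_2,u_3$ as well, depending on which gives the better count) from $G$, invoke minimality to get a good coloring $\phi$ of the smaller graph, and observe as in Lemma~\ref{lem:NoK_2,4} that $\phi$ remains a good partial coloring of $G$ because same-colored edges of the smaller graph stay far apart in $G$.

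The core of the argument is the bookkeeping of available colors. After deleting $v_1,v_2$ only, the six edges $u_iv_j$ ($i\in[3]$, $j\in[2]$) are uncolored, and each $u_iv_j$ sees: the two edges $u_iv_{j'}$ and $u_ix_i,u_iy_i$ at $u_i$, the edges at $v_{j'}$ and $z$ among the $u_i$'s, and the edges at $x_i,y_i$ and at the $z_j$'s — I would count these and expect roughly $|A_\phi(u_iv_j)|\ge 21 - (\text{about }14) = 7$ or so, with the exact number depending on coincidences among $x_i,y_i,z_j$. If the union $\bigcup_{i,j}A_\phi(u_iv_j)$ is large enough (at least $6+$ the max deficiency, so that a greedy ordering works, or simply at least the number needed for an SDR on the six edges viewed inside the subgraph they span), we finish directly by SDR or greedily, exactly as in the earlier lemmas. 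The hard part — precisely as in Lemma~\ref{lem:NoK_2,4} — will be the extremal case where all the $A_\phi(u_iv_j)$ coincide with a single small set $[k]$; then I would push the argument further by uncoloring one or two carefully chosen edges among $\{u_ix_i,u_iy_i\}$ (choosing a pair $x_ix_{i'}\notin E(G)$, which exists since a vertex of degree $4$ cannot be adjacent to all of $x_2,y_2,\dots$), recoloring them with a common color to free up colors on the $u_iv_j$'s, and then completing greedily. One may alternatively delete all five vertices $u_1,u_2,u_3,v_1,v_2$ to get much larger available sets ($|A(u_ix_i)|,|A(u_iy_i)|,|A(v_jz_j)|$ of size around $9$, and $|A(u_iv_j)|$ of size around $15$) and finish by a single SDR; I would check whether this cruder deletion already suffices, since it would avoid the delicate extremal case entirely.

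The main obstacle I anticipate is getting the counting tight enough that the crude "delete all five" approach works in one shot — the edges $u_iv_j$ each see up to $18$ other edges in the worst case (two at $u_i$ toward $x_i,y_i$, the other four $u_{i'}v_{j'}$ edges now uncolored too so not counted, but the six edges at $v_{j'}$ and $z_j$ and at $x_i,y_i$), so the available set could be as small as $21-6=15$ minus overlaps, and an SDR on six edges with sets of size $\ge 6$ in a generic position is immediate — but degeneracies (e.g. $x_1=x_2$, or $z_1$ adjacent to several $x_i$) could drop a count below the threshold. If so, I fall back on the two-stage deletion-and-recoloring trick of Lemma~\ref{lem:NoK_2,4}. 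I would therefore first attempt the clean version: delete $u_1,u_2,u_3,v_1,v_2$, verify $|A_\phi(u_iv_j)|\ge 6$ and $|A_\phi(u_ix_i)|,|A_\phi(u_iy_i)|,|A_\phi(v_jz_j)|\ge 6$ always hold, and finish by SDR; and only if a coincidence among $\{x_i,y_i,z_j\}$ breaks this do I invoke the finer extremal analysis.
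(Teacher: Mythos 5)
Your submission is a contingency plan rather than a proof, and the contingencies are exactly where the difficulty lives. Take your preferred ``clean'' route first: delete all of $u_1,u_2,u_3,v_1,v_2$ and finish by one SDR. After this deletion there are $14$ uncolored edges, and the counts are not what you hope: each $u_ix_i$ (and $u_iy_i$) sees up to $15$ colored edges (three at $x_i$, three at $y_i$, nine at the far neighbors of $x_i$), so $|A(u_ix_i)|\ge 6$ only, while $|A(u_iv_j)|\ge 12$ and $|A(v_jz_j)|\ge 9$. Hall's condition must then be verified for subsets of size up to $14$, and the cardinality bounds alone do not do this: nothing in the counting prevents all fourteen lists from being contained in a single set of $12$ or $13$ colors, in which case the full set of uncolored edges violates Hall. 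Ruling that out requires precisely the extremal argument you defer to ``if so, I fall back\dots'' --- namely, locating two mutually non-seeing uncolored edges that either share an available color (merge them) or whose lists union to many colors. So the one-shot SDR is not established, and the fallback is described only as ``uncoloring one or two carefully chosen edges,'' without identifying which edges or why the recoloring succeeds.

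The concrete idea you are missing is how to handle the two pendant edges $v_1z_1$ and $v_2z_2$ in the delete-$\{v_1,v_2\}$ scenario, which is where the paper actually works. There these edges see up to $17$ or $18$ colored edges and so have as few as $3$ or $4$ available colors; they, not the six $u_iv_j$, are the bottleneck, and your sketch never discusses how they get colored. The paper resolves this by splitting on whether $z_1z_2\in E(G)$. When $z_1z_2\notin E(G)$ it applies minimality not to $G-\{v_1,v_2\}$ but to $G-\{v_1,v_2\}$ \emph{plus the virtual edge} $z_1z_2$: this forces the seven colors appearing around $z_1$ and $z_2$ to be pairwise distinct and lets both $v_1z_1$ and $v_2z_2$ inherit the color of the virtual edge, after which a short analysis of whether the three remaining colors at $z_1$ appear near the $x_i,y_i$ finishes the coloring. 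When $z_1z_2\in E(G)$ the shared edge already coordinates the two ends and a direct (but still case-heavy) argument on the colors $1,2$ at $z_1$ completes the proof. Without this case split and the virtual-edge device (or a worked-out substitute for coloring $v_1z_1,v_2z_2$), your outline does not close.
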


\begin{proof}
Suppose on the contrary that $G$ contains a $K_{2,3}$ with partite sets $\{u_1,u_2,u_3\}$ and $\{v_1,v_2\}$.  By Lemma \ref{lem:NoTriangle}, this subgraph is induced, and as $G$ is 4-regular, for $i \in [3]$, there exist vertices $x_i,y_i$ adjacent to $u_i$ other than $v_1, v_2$, and vertices $z_1,z_2$ adjacent to $u_1,u_2$, respectively, other than $u_1,u_2,u_3$.  By Lemma \ref{lem:NoK_2,4}, $z_1\neq z_2$, and by Lemma \ref{lem:NoTriangle}, $z_1,z_2$ are distinct from the $x_i,y_i$.

We define the following sets, for $j \in [2]$, let $\sY_j := \{u_iv_j: i \in [3]\}$, let $\sY := \sY_1 \cup \sY_2$,  let  $\sZ := \{v_1z_1, v_2z_2\}$, and let $\sX :=\{u_ix_i, u_iy_i: i \in [3]\}$.

We proceed based on the existence of $z_1z_2 \in E(G)$.

\begin{case}
$z_1z_2 \in E(G)$.
\end{case}

Let $G'$ be the graph obtained from $G$ by deleting $v_1$ and $v_2$.  By the minimality of $G$, $G'$ has a good coloring $\phi$.

Observe that $|A_\phi(e)| \ge 6$ for $e \in \sY$ and $|A_\phi(e')| \ge 4$ for $e' \in \sZ$.   Since $z_1z_2 \in E(G)$, we may assume $\sU_\phi(z_1) = \{1,2,5\}$ and $\sU_\phi(z_2) = \{3,4,5\}$ so that $\phi(z_1z_2) = 5$.

We can extend $\phi$ by coloring the edges of $\sZ$, and denote this good partial coloring by $\sigma$.  Note that neither edge in $\sZ$ is colored with 1 or 2.  Now, every edge in $\sY$ has at least four colors available on it.  We proceed by considering which edges incident to some $x_i$ or $y_i$ are already colored with either 1 or 2.

We first claim that neither 1 nor 2 appear on any edge of $\sX$ under $\sigma$.   If 1 and 2 both appear, then each vertex in $\sY_1$ has at least six colors available, and we can extend $\sigma$ by SDR.  If only 1 appears on an edge of $\sX$, then each edge in $\sY_1$ has at least five colors available.   So if one edge in $\sY$ has at least six colors available, we can extend $\sigma$ by SDR.  This implies that for every $i \in [3]$, $2 \notin \sU_\sigma(x_i) \cup \sU_\sigma(u_i)$, else $|A_\sigma(u_iv_1)| \ge 6$.  Thus, we can color any edge in $\sY_2$ with 2 to obtain a good partial coloring $\psi$.  Observe that $|A_\psi(e)| \ge 4$ for $e \in \sY_2$, and $|A_\psi(e')| \ge 5$ for $e' \in \sY_1$.  So we can extend $\psi$ by SDR, which proves our claim.

We now return to $\sigma$.  Suppose $1 \notin \sU_\sigma(x_1) \cup \sU_\sigma(y_1)$.  If in addition, $2 \notin \sU_\sigma(x_2) \cup \sU_\sigma(y_2)$.  Then we can color $u_1v_2$ and $u_2v_2$ with 1 and 2, respectively to obtain a good partial coloring of $G$.  From here we can color the remaining four edges by SDR as every edge in $\sY_1$ still has at least four colors available.  Then by a similar argument, we may assume $2 \in \sU_\sigma(x_i) \cup \sU_\sigma(y_i)$ for $i \in \{2,3\}$.  We again color $u_1v_2$ with 1 to obtain a good partial coloring of $G$.  From here we can color the remaining five edges by SDR as $v_1u_2$ and $v_1u_3$ each have at least five colors available.

Thus, we can assume $1, 2 \in \sU_\sigma(x_i) \cup \sU_\sigma(y_i)$ for each $i \in [3]$.  We then color the edges of $\sY$ be SDR as every edge in $\sY_1$ has at least six colors available.  This completes the case.

\begin{case}
$z_1z_2 \notin E(G)$.
\end{case}

Let $G'$ be the graph obtained from $G$ by deleting $v_1$ and $v_2$ and adding the edge $z_1z_2$.  By the minimality of $G$, $G'$ has a good coloring, which ignoring $z_1z_2$, can be applied to $G$.  We immediately extend this by coloring $v_1z_1$ and $v_2z_2$ with the color used on $z_1z_2$.    Call this good partial coloring $\phi$.

We may assume that $\sU_\phi(z_1) = \{15, 16, 17, 21\}$ and $\sU_\phi(z_2) = \{18, 19, 20, 21\}$ so that $\phi(v_1z_1) = \phi(v_2z_2) = 21$.   Observe that $|A_\phi(e)| \ge 5$ for $e \in \sY$.  If $|A_\phi(e)| \ge 6$ for any $e \in \sY$, we can color the remaining edges of $G$ by SDR.

If 15, 16, or 17 appears in some $A_\phi(x_i) \cup A_\phi(y_i)$ for some $i \in [3]$, then $|A_\phi(v_1u_i)| \ge 6$, and we are done.  So we may color the edges in $\sY_2$ with 15, 16, and 17, to obtain a good partial coloring of $G$, call it $\sigma$.  Observe that $|A_\sigma(e)| \ge 5$ for each $e \in \sY_1$, so that we can color them greedily to obtain a good coloring of $G$.

This completes the proof of the case, and hence, proves the lemma.
\end{proof}

\begin{lemma}\label{lem:No4cycle}
$G$ has no 4-cycles.
\end{lemma}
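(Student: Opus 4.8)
The plan is to assume for contradiction that $G$ contains a $4$-cycle $C=u_1v_1u_2v_2u_1$ and produce a strong edge-colouring of $G$ with $21$ colours. First I would extract the local structure. By Lemma~\ref{lem:NoTriangle} the cycle $C$ is induced and $u_1\not\sim u_2$, $v_1\not\sim v_2$, and by Lemma~\ref{lem:NoK_2,3} the pairs $\{u_1,u_2\}$ and $\{v_1,v_2\}$ each have exactly the two common neighbours lying on $C$. Write $N(u_1)=\{v_1,v_2,p_1,p_2\}$, $N(u_2)=\{v_1,v_2,p_3,p_4\}$, $N(v_1)=\{u_1,u_2,q_1,q_2\}$, $N(v_2)=\{u_1,u_2,q_3,q_4\}$. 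Using only the absence of triangles and of $K_{2,3}$, one checks that $p_1,\dots,p_4,q_1,\dots,q_4$ are eight distinct vertices, each different from $V(C)$: any coincidence among them either creates a triangle through an edge of $C$ or gives a third common neighbour of $\{u_1,u_2\}$ or of $\{v_1,v_2\}$.

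Next I would reduce to a bounded list-colouring problem by deleting $V(C)$. By the minimality of $G$, the graph $G-V(C)$ has a strong $21$-edge-colouring $\phi$; since $G$ has the same induced subgraph on $V(G)\setminus V(C)$, this restricts to a good partial colouring of $G$ whose only uncoloured edges are the four edges of $C$ and the eight pendant edges $u_ip_j$, $v_iq_j$. A routine count of $N(e)$ and of how many of its members are themselves uncoloured gives $|A_\phi(e)|\ge 8$ for each edge $e$ of $C$ and $|A_\phi(e)|\ge 6$ for each pendant edge. The conflict pattern among these twelve edges is: the four cycle edges form a $K_4$; every cycle edge sees all eight pendant edges; the two pendants at each of $u_1,u_2,v_1,v_2$ conflict with one another; every pendant at $u_1$ or $u_2$ conflicts with every pendant at $v_1$ or $v_2$; and there is an extra conflict between two $p$-pendants (resp.\ two $q$-pendants) exactly when the corresponding outer vertices are adjacent.

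To finish in the principal case, where no two of $p_1,\dots,q_4$ are adjacent, the eight pendant edges form (on the pendant side) a $K_{4,4}$ together with a perfect matching on each side, which has maximum degree $5<6$; hence its edges can be coloured greedily — indeed with only four colours — and then the four cycle edges, each of list size $\ge 8$ and each blocked by at most $4+3$ already-used colours, can be completed by an SDR/greedy argument. When a direct completion is blocked, it is precisely because the available lists are concentrated on a small common palette; exactly as in the proofs of Lemmas~\ref{lem:NoK_2,4} and~\ref{lem:NoK_2,3}, one then uncolours and recolours a bounded number of edges incident with the outer vertices $p_j,q_j$ to break that concentration. This splits into finitely many cases according to which (if any) of the adjacencies among $p_1,\dots,q_4$ hold — the no-triangle/no-$K_{2,3}$ conditions limit these configurations — and in each case one produces a good partial colouring with enough slack to finish greedily, contradicting that $G$ is a counterexample.

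The hard part will be the recolouring in the concentrated case. Deleting all of $C$ lifts the pendant lists only to size $\ge 6$, while each of the four cycle edges sees all eight pendants, so the plain counting/SDR bound is barely sufficient and genuinely fails for some colourings $\phi$; the work is in showing one can always free up the colours needed near the outer vertices without disturbing $\phi$ elsewhere, and in handling the extra pendant conflicts that appear when some $p_i\sim p_j$ or $q_i\sim q_j$.
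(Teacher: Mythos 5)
Your setup matches the paper's: delete the four cycle vertices, observe the eight outer neighbours are distinct (via no triangles and no $K_{2,3}$), restrict a $21$-colouring of $G$ minus the cycle to a good partial colouring, and try to finish on the twelve uncoloured edges by reusing colours on pendant edges at opposite cycle vertices plus an SDR argument. However, the core of the argument is missing, and the one concrete claim you make for the ``principal case'' does not hold. You assert that the eight pendant edges can be coloured ``indeed with only four colours'' because their conflict graph ($K_{4,4}$ plus a matching on each side) is $4$-chromatic. That is a statement about ordinary colouring, not list colouring: to give $p_1u_1$ and $p_3u_2$ a common colour you need $A_\phi(p_1u_1)\cap A_\phi(p_3u_2)\neq\emptyset$, and these are sets of size as small as $6$ in a palette of $21$, so they may well be disjoint. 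If the pendants end up using eight distinct colours, each cycle edge sees all eight of them plus the three other cycle edges, i.e.\ up to $11$ forbidden colours against a list of size $9$ (your ``$4+3$'' undercounts: every cycle edge conflicts with all eight pendants). So the greedy completion you describe fails exactly in the situation that matters.

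The paper resolves precisely this point with a dichotomy you never state: for a non-adjacent pair of opposite pendant edges, \emph{either} they can receive a common colour (which is then imposed, shrinking the palette seen by the cycle edges), \emph{or} their lists are disjoint, so the union of their lists has size at least $12$, which is then fed into the SDR/Hall verification over all twelve edges. The three cases of the paper's proof track how many such pairs can be merged, and in each case the large unions coming from unmergeable pairs supply the Hall condition. Your fallback --- ``uncolour and recolour a bounded number of edges incident with the outer vertices, as in Lemmas~\ref{lem:NoK_2,4} and~\ref{lem:NoK_2,3}'' --- is not an argument; in those lemmas the recolouring was driven by a rigid structure ($A_\phi(u_iv_j)=[7]$ for all $i,j$) that has no analogue established here. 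As written, the proposal identifies the difficulty but does not overcome it, so there is a genuine gap at the decisive step.
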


\begin{proof}
Suppose on the contrary that $u_1u_2u_3u_4$ is a 4-cycle in $G$.  By Lemmas \ref{le1} and \ref{lem:NoTriangle}, for each $i \in [4]$, there exists $x_i,y_i \in N(u_i)\setminus\{u_1,u_2,u_3,u_4\}$.  By Lemmas \ref{lem:NoTriangle} and \ref{lem:NoK_2,3}, $x_1,y_1,\dots, x_4,y_4$ are distinct.  Define the sets $\sX := \{x_iu_i, y_iu_i: i \in [4]\}$, $\sY := \{x_ix_{i+1}: i \in [4] \text{ modulo 4}\}$, $\sX_i = \{x_iu_i, y_iu_i\}$ for $i \in [4]$.

By the minimality of $G$, $G - \{u_1,u_2,u_3,u_4\}$ has a good coloring $\phi$ that we can apply to $G$.  Observe that $|A_\phi(e)| \ge 6$ for $e \in \sX$ and $|A_\phi(e')| \ge 9$ for $e' \in \sY$.  We proceed based on if we can extend $\phi$ by coloring  the edges of $\sX_1$ and $\sX_3$ (or $\sX_2$ and $\sX_4$) with the same colors.

\setcounter{case}{0}
\begin{case}
We can extend $\phi$ by coloring the edges of $\sX_1$ and $\sX_3$ with 1 and 2.
\end{case}

Suppose we can extend $\phi$ by coloring $x_1u_1, x_3u_3$ with 1, and $y_1u_1, y_3u_3$ with 2.  Call this good partial coloring $\sigma$.  Observe that $|A_\sigma(e)| \ge 4$ for $e \in \sX_2 \cup \sX_4$ and $|A_\sigma(e')| \ge 7$ for $e' \in \sY$.

If there are at least eight colors available over all the edges of $\sY$ under $\sigma$, then we can obtain a good coloring of $G$ by SDR.  Thus, $A_\sigma(e_1) = A_\sigma(e_2)$ and $|A_\sigma(e_1)| = 7$ for all $e_1,e_2 \in \sY$.  Without loss of generality, we may assume $\Upsilon_\sigma(x_i,u_i) \cup \Upsilon_\sigma(y_i,u_i) =  \{3,4,5,6,7,8\}$ for $i = 1,3$, and $\Upsilon_\sigma(x_i,u_i) \cup \Upsilon_\sigma(y_i,u_i) = \{9,10,11,12,13,14\}$ for $i = 2,4$.

Now, $x_2,y_2,x_4,y_4$ cannot induce a $K_{2,2}$ by Lemma \ref{lem:NoK_2,3}.  So, say $x_2x_4 \notin E(G)$.  If $|A_\sigma(x_2u_2) \cup A_\sigma(x_4u_4)| \ge 8$, then we obtain a good coloring of $G$ by SDR.  Thus, we can extend $\sigma$ by coloring $x_2u_2, x_4u_4$ with the same color, and then further extend by SDR.

\begin{case}
We can extend $\phi$ by coloring an edge of $\sX_1$ and an edge of $\sX_3$ with 1.
\end{case}

We may assume that we can extend $\phi$ by coloring $x_1u_1, x_3u_3$ with 1.  Suppose that we can further extend $\phi$ by coloring an edge of $\sX_2$ and $\sX_4$ with 2, say $x_2u_2, x_4u_4$.  Call this good partial coloring $\sigma$.  Observe that $|A_\sigma(e)| \ge 4$ for uncolored $e \in \sX$ and $|A_\sigma(e')| \ge 7$ for $e' \in \sY$.

As in the previous case, we may assume that $\Upsilon_\sigma(x_i,u_i) \cup \Upsilon_\sigma(y_i,u_i) =  \{3,4,5,6,7,8\}$ for $i = 1,3$, and $\Upsilon_\sigma(x_i,u_i) \cup \Upsilon_\sigma(y_i,u_i) = \{9,10,11,12,13,14\}$ for $i = 2,4$ so that $A_\sigma(e) = A_\sigma(e')$ and $|A_\sigma(e)| = 7$ for $e,e' \in \sY$.

Now, suppose $y_1y_3 \in E(G)$.  Then $|A_\sigma(y_1u_1)|, |A_\sigma(y_3u_3)| \ge 7$, and we can obtain a good coloring of $G$ by coloring $y_2u_2, y_4u_4, x_1x_2, x_2x_3, x_3x_4, x_4x_1, y_1u_1, y_3u_3$ in this order.

So $y_1y_3 \notin E(G)$, and by the previous case $A_\sigma(y_1u_1) \cap A_\sigma(y_3u_3) = \emptyset$.  Thus, $|A_\sigma(y_1u_1) \cup A_\sigma(y_3u_3)| \ge 8$, and we can obtain a good coloring of $G$ by SDR.

Thus, it remains to consider when we cannot extend $\phi$ by coloring an edge of $\sX_2$ and $\sX_4$ with a common color.  By Lemma \ref{lem:NoK_2,3}, we may assume $x_2x_4 \notin E(G)$.  Let $\psi$ denote the good partial coloring extending $\phi$ by coloring $x_1u_1, x_3u_3$ with 1.

Observe that $|A_\psi(e)| \ge 5$ for uncolored $e \in \sX$ and $|A_\psi(e')| \ge 8$ for $e' \in \sY$.  Since $x_2x_4 \notin E(G)$, we must have $|A_\psi(x_2u_2) \cup A_\psi(x_4u_4)| \ge 10$, otherwise we can color $x_2u_2, x_4u_4$ with a common color, a contradiction.  Now, if there are at least nine colors available over all the edges of $\sY$ under $\psi$, then we obtain a good coloring of $G$ by SDR.  Thus, we have $A_\psi(e_1) = A_\psi(e_2)$ and $|A_\psi(e_1)| = 8$ for $e_1, e_2 \in \sY$.

As above, we may assume $\Upsilon_\sigma(x_i,u_i) \cup \Upsilon_\sigma(y_i,u_i) =  \{2,3,4,5,6,7\}$ for $i = 1,3$, and $\Upsilon_\sigma(x_i,u_i) \cup \Upsilon_\sigma(y_i,u_i) = \{8,9,10,11,12,13\}$ for $i = 2,4$.

Suppose $y_2y_4 \notin E(G)$.  By the previous case, $A_\sigma(y_2u_2) \cap A_\sigma(y_4u_4) = \emptyset$ so that $|A_\sigma(y_2u_2) \cup A_\sigma(y_4u_4)| \ge 10$.  We then obtain a good coloring of $G$ by SDR.

Thus, $y_2y_4 \in E(G)$ so that $|A_\sigma(y_2u_2)|, |A_\sigma(y_4u_4)| \ge 8$.  If $y_2x_4 \in E(G)$, then $|A_\sigma(y_2u_2)| \ge 11$, and we obtain a good coloring of $G$ by SDR.  Thus, $y_2x_4 \notin E(G)$, and by symmetry, $x_2y_4 \notin E(G)$.  By the previous case, we have $|A_\sigma(y_2u_2) \cup A_\sigma(x_4u_4)|, |A_\sigma(x_2u_2) \cup A_\sigma(y_4u_4)| \ge 13$, and we obtain a good coloring of $G$ by SDR.

\begin{case}
We cannot extend $\phi$ by coloring an edge of $\sX_1$  and an edge of $\sX_3$ with the same color.
\end{case}

By symmetry, we may assume that the same holds for edges in $\sX_2$ and $\sX_4$.  By Lemma \ref{lem:NoK_2,3}, we may assume that $x_1x_3, x_2x_4 \notin E(G)$.  Thus, by the previous case, $|A_\phi(x_1u_1) \cup A_\phi(x_3u_3)|, |A_\phi(x_2u_2) \cup A_\phi(x_4u_4)| \ge 12$.

Now, if we have at least ten colors available over all the edges in $\sY$, then we can obtain a good coloring of $G$ by SDR.  So, we have $A_\phi(e_1) = A_\phi(e_2)$ and $|A_\phi(e_1)| = 9$ for $e_1,e_2 \in \sY$.  Thus, as above, we may assume that $\Upsilon_\sigma(x_i,u_i) \cup \Upsilon_\sigma(y_i,u_i) =  \{1,2,3,4,5,6\}$ for $i = 1,3$, and $\Upsilon_\sigma(x_i,u_i) \cup \Upsilon_\sigma(y_i,u_i) = \{7,8,9,10,11,12\}$ for $i = 2,4$.

If $y_1y_3 \in E(G)$, then $|A_\phi(y_1u_1)|, |A_\phi(y_3u_3)| \ge 9$, and we obtain a good coloring of $G$ by SDR.  So, $y_1y_3 \notin E(G)$.  By the previous case, we have $|A_\phi(y_1u_1) \cup A_\phi(y_3u_3)| \ge 12$, and we obtain a good coloring of $G$ by SDR.

Thus, in any case, we can extend $\phi$ to a good coloring of $G$.
\end{proof}

%\begin{lemma}\label{le2}
%$G$ has girth at least six.
%\end{lemma}

\begin{proof}[Proof of Lemma~\ref{le2}]
By the previous lemmas, we know that the girth of $G$ is at least five.  So suppose on the contrary that $u_1u_2u_3u_4u_5$ is a 5-cycle.  By Lemmas \ref{le1}, \ref{lem:NoTriangle}, and \ref{lem:No4cycle}, each $u_i$ has neighbors $x_i,y_i$ not on this 5-cycle. Furthermore, $x_1, y_1, \dots, x_5,y_5$ are distinct and the only possibly adjacencies are between $\{x_i,y_i\}$ and $\{x_{i\pm 2}, y_{i\pm 2}\}$, $i \in [5]$ modulo 5.  However, by Lemma \ref{lem:No4cycle}, neither $x_i$ nor $y_i$ can be adjacent to both $x_{i+2}$ and $y_{i+2}$ (similarly, $x_{i-2}$ and $y_{i-2}$).  As a result, we may assume that $x_2y_4, x_4y_1, x_1y_3, x_3y_5 \notin E(G)$.

Let $G'$ be the graph obtained from $G$ by removing $u_1,u_2,u_3,u_4,u_5$.  By the minimality of $G$, $G'$ has a good coloring $\phi$.  Let $\sX$ denote $\{x_iu_i, y_iu_i: i \in [5]\}$ and $\sY$ denote $\{u_iu_{i+1}: i \in [5] \textrm{ modulo } 5\}$.  Observe that $|A_\phi(e)| \ge 9$ for $e \in \sY$ and $|A_\phi(e')| \ge 6$ for $e' \in \sX$.

Since $x_1y_3 \notin E(G)$, if $A_\phi(x_1u_1) \cap A_\phi(y_3u_3) \neq \emptyset$, then we can color edges $x_1u_1, y_3u_3$ with the same color.  Similarly for the other three nonadjacencies.  Let $\sS := \{\{x_2u_2, y_4u_4\}$, $\{x_4u_4, y_1u_1\}$,$\{x_1u_1, y_3u_3\}$, $\{x_3u_3,y_5u_5\}\}$ so that each element of $\sS$ is a pair of edges that can possibly receive the same color.  Let $\sS' \subseteq \sS$ such that we can extend $\phi$ by coloring each pair of edges in $\sS'$ with its own color, and suppose that $\sS'$ is as large as possible.  Color each pair of edges in $\sS'$ with its own color, and call this good partial coloring $\sigma$.

\begin{case}
$\sS' = \sS$.
\end{case}

Observe that $|A_\sigma(e)| \ge 5$ for $e \in \sY$, and $|A_\sigma(y_2u_2)|, |A_\sigma(x_5u_5)| \ge 2$.  Suppose there exists $\alpha \in A_\sigma(y_2u_2) \cap A_\sigma(u_4u_5)$.  We then color $y_2u_2, u_4u_5$ with $\alpha$, and then  $x_5u_5, u_5u_1, u_1u_2, u_3u_4, u_2u_3$ in this order to obtain a good coloring of $G$.  Thus, $|A_\sigma(y_2u_2) \cup A_\sigma(u_4u_5)| \ge 7$, and by symmetry, $|A_\sigma(x_5u_5) \cup A_\sigma(u_2u_3)| \ge 7$.  We then obtain a good coloring of $G$ by SDR.

\begin{case}
$\sS' \subset \sS$.
\end{case}

Let $k' := |\sS'|$ so that $0 \le k' \le 3$.  Observe that $|A_\sigma(e)| \ge 9 - k'$ for $e \in \sY$, and $|A_\sigma(e')| \ge 6 - k'$ for uncolored $e' \in \sX$.  Since $\sS'$ is the largest subset of $\sS$ that we can color, we have $|A_\sigma(g) \cup A_\sigma(g')| \ge 12 - 2k'$ for all $\{g,g'\} \in \sS\setminus\sS'$.

Since $k' \le 3$, there exists some uncolored $f \in \sX\setminus\{y_2u_2, x_5u_5\}$ and $h \in \sY\setminus\{u_2u_3, u_4u_5\}$ such that $f$ and $h$ can receive the same color if $A_\sigma(f) \cap A_\sigma(h) \neq \emptyset$.  Since $f \notin \{y_2u_2, x_5u_5\}$ and $k' \le 3$, there exists an uncolored edge $f' \in \sX$ such that $\{f,f'\} \in \sS$.

Let $\sT := \{\{y_2u_2, u_4u_5\}, \{x_5u_5,u_2u_3\}, \{f,h\}\}$ so that every element of $\sT$ is a pair of edges that can possibly receive the same color.  Let $\sT' \subseteq \sT$ such that we can extend $\sigma$ by coloring each pair of edges in $\sT'$ with its own color, and suppose that $\sT'$ is as large as possible.  Color each pair of edges in $\sT'$ with its own color, and call this good partial coloring $\psi$.   Let $t' := |\sT'|$ so that $0 \le t' \le 3$.

Let $\sX' \subset \sX$ and $\sY' \subset \sY$ be the edges colored by $\psi$.  So, $|\sX\setminus\sX'| = 10 - 2k' - t'$ and $|\sY\setminus\sY'| = 5 - t'$.  Additionally, $|A_\psi(e_x)| \ge 6 - k' - t'$ for all $e_x \in \sX\setminus\sX'$ and $|A_\psi(e_y)| \ge 9 - k' - t'$ for all  $e_y \in \sY\setminus\sY'$.

We now show that we can obtain a good coloring of $G$ by SDR.  Let $\sA$ be a nonempty subset of $(\sX\setminus\sX') \cup (\sY\setminus\sY')$, and let $\bigcup_\sA := \bigcup_{e \in \sA} A_\psi(e)$.  So $1 \le |\sA| \le 15 - 2k' - 2t'$, and we aim to show that $|\bigcup_\sA| \ge |\sA|$.

\begin{subcase}
$\psi$ does not color $f$ or $h$.
\end{subcase}

Observe that $t' \le 2$.  If $1 \le |\sA| \le 6 - k' - t'$, then $|\bigcup_\sA| \ge 6 - k' - t'$.

If $7' - k' - t' \le |\sA| \le 9 - k' - t'$ and $\sA \cap (\sY\setminus\sY') \neq \emptyset$, then $|\bigcup_\sA| \ge 9 - k' - t'$.  So, we may assume $\sA \subseteq (\sX\setminus\sX')$.  Since $|\sS\setminus\sS'| = 4 - k' \ge 1$, if $\sA$ contains at least $7 - k' - t'$ edges from $\sX\setminus\sX'$, it must include a pair of edges, say $e_s, e_s'$, that form an element of $\sS\setminus\sS'$.  Thus, $|\bigcup_\sA| \ge |A_\psi(e_s) \cup A_\psi(e_s')| \ge 12 - 2k' - t'$, otherwise we could have colored $e_s$ and $e_s'$ with the same color and obtained a larger $\sS' \subseteq \sS$.

Since $|\sT\setminus\sT'| = 3 - t' \ge 1$, if $\sA$ contains at least $13 - 2k' - t'$ edges, it must include a pair of edges, say $e_t, e_t'$, that form an element of $\sT\setminus\sT'$.  Thus, $|\bigcup_\sA| \ge |A_\psi(e_t) \cup A_\psi(e_t')| \ge 15 - 2k' - 2t'$, otherwise we could have colored $e_t$ and $e_t'$ with the same color and obtained a larger $\sT\subseteq\sT'$.

So, it remains to consider when $10 - k' - t' \le |\sA| \le 12 - 2k' - t'$.  Thus, if $k' = 3$, we are done and obtain a good coloring of $G$ by SDR.  So $k' \le 2$.   Observe that $|(\sS\setminus\sS') \cup (\sT\setminus\sT')| = 7 - k' - t'$, and the only edge that is contained in an element of both $\sS\setminus\sS'$ and $\sT\setminus\sT'$ is $f$ ($\{f,f'\} \in \sS\setminus\sS'$ and $\{f,h\} \in \sT\setminus\sT'$).  Thus, we can find $6 - k' - t'$ elements in $(\sS\setminus\sS') \cup (\sT \setminus\sT')$ that are pairwise disjoint.

As a result, when $|\sA| \ge 10 - k' - t'$, $\sA$ must contain a pair of edges, say $e, e'$, that forms an element of $(\sS\setminus\sS') \cup (\sT\setminus\sT')$.  Thus, $|\bigcup_\sA| \ge |A_\psi(e) \cup A_\psi(e')| \ge 12 - 2k' - t' \ge 10 - k' - t'$ for $k \le 2$.  So, in any case, we obtain a good coloring of $G$ by SDR.

\begin{subcase}
$\psi$ colors both $f$ and $h$.
\end{subcase}

Observe that $t' \ge 1$ and $f' \in \sX\setminus\sX'$.  If $1 \le |\sA| \le 6 - k' - t'$, then $|\bigcup_\sA| \ge 6 - k' - t'$.

If $|\sA| = 7 - k' - t'$ and either $f' \in \sA$ or $\sA \cap (\sY\cap\sY') \neq \emptyset$, then $|\bigcup_\sA| \ge 7 - k' - t'$.  So, we may assume $\sA \subseteq \sX\setminus(\sX' \cup \{f'\})$.  Since there are exactly $3 - k'$ uncolored pairs in $\sS\setminus\sS'$, if $\sA$ contains at least $7 - k' - t'$ edges from $\sX\setminus(\sX'\cup\{f'\})$, it must include a pair of edges, say $e_s, e_s'$, that form an element of $\sS\setminus\sS'$.  Thus, $|\bigcup_\sA| \ge |A_\psi(e_s) \cup A_\psi(e_s')| \ge 12 - 2k' - t' \ge 7' - k' - t'$.

If $8 - k' - t' \le |\sA| \le 9 - k' - t'$ and $\sA \cap (\sY\setminus\sY') \neq \emptyset$, then $|\bigcup_\sA| \ge 9 - k' - t'$.  So we may assume $\sA \subseteq (\sX\setminus\sX')$.  However, in a similar manner to the above, $\sA$ must contain a pair of edges that form an  element of $\sS\setminus\sS'$.  Thus, $|\bigcup_\sA| \ge 12 - 2k' - t' \ge 9 - k' - t'$.

So, it remains to consider when $10 - k' - t' \le |\sA| \le 15 - 2k' - 2t'$.  Suppose that $t' \le 2$ so that $|\sT\setminus\sT'| = 3 - t' \ge 1$.  As in the previous subcase, if $\sA$ contains at least $13 - 2k' - t'$ edges, it contains a pair of edges, say $e_t, e_t'$, that form an element of $\sT\setminus\sT'$.  Thus, $|\bigcup_\sA| \ge |A_\psi(e_t) \cup A_\psi(e_t')| \ge 15 - 2k' - 2t'$.  So, $10 - k' - t' \le |\sA| \le 12 - 2k' - t'$.  If $k' = 3$, we are done and obtain a good coloring of $G$ by SDR.  If $k' \le 2$, then as in the previous subcase, we can find $6 - k' - t'$ elements in $(\sS\setminus\sS') \cup (\sT\setminus\sT')$ that are pairwise disjoint.  Thus, when $|\sA| \ge 10 - k' - t'$, $\sA$ must contain a pair of edges that form an element of $(\sS\setminus\sS') \cup (\sT\setminus\sT')$, and $|\bigcup_\sA| \ge 12 - 2k' - t' \ge 10 - k' - t'$ for $k \le 2$.  So, when $t' \le 2$, we obtain a good coloring of $G$ by SDR.

When $t' = 3$, we consider $7 - k' \le |\sA| \le 9 - 2k'$.  If $k' = 3$, we are done and obtain a good coloring of $G$ by SDR.  When $k' \le 2$, $|\sS\setminus\sS'| = 3 - k' \ge 1$ so that if $\sA$ contains at least $7 - k'$ edges, it contains a pair of edges, say $e_s, e_s'$ that form an element of $\sS\setminus\sS'$, and $|\bigcup_\sA| \ge 9 - 2k'$.

Thus, in any case we obtain a good coloring of $G$ by SDR.
\end{proof}

\section{Closing remarks}
The essential part of the proof is to get a nice partition of the vertices described in the introduction.  This partition is largely due to some kind of non-trivial edge-cuts. The study of existence of such edge-cuts may be of independent interest. %The girth about the minimum counterexamples, with a long and complicated argument in our proof, may not be an essential requirement for general $\Delta$.

\section*{Acknowledgement}
 
 The authors are very thankful to the referees for their valuable comments.

%%%%%%%%%%%%%%%%%%%%%%%%%%%%%%%%%%%%%%%%%%%
%%%%%%%%%%%%%%%%%%%%%%%%%%%%%%%%%%%%%%%
%%%%%%%%%%%%%%%%%%%%%%%%%%%%%%%%%%%%%%%%%%%
%%%%%%%%%%%%%%%%%%%%%%%%%%%%%%%%%%%%%%%
%
%  END OF GIRTH 6
%
%%%%%%%%%%%%%%%%%%%%%%%%%%%%%%%%%%%%%%%%%%%
%%%%%%%%%%%%%%%%%%%%%%%%%%%%%%%%%%%%%%%
%%%%%%%%%%%%%%%%%%%%%%%%%%%%%%%%%%%%%%%%%%%
%%%%%%%%%%%%%%%%%%%%%%%%%%%%%%%%%%%%%%%
%%%%%%%%%%%%%%%%%%%%%%%%%%%%%%%%%%%%%%%%%%%
%%%%%%%%%%%%%%%%%%%%%%%%%%%%%%%%%%%%%%%

\small


\begin{thebibliography}{9999}
\bibitem{[Andersen]}
L. Andersen, The strong chromatic index of a cubic graph is at most 10, Discrete Math., 108 (1992) no.1-3, 231--252.

\bibitem{BI}  O. Borodin and A. Ivanova, Precise upper bound for the strong edge chromatic number of sparse planar graphs, Discuss. Math. Graph Theory 33 (2013) no. 4, 759--770.

\bibitem{BJ} H. Bruhn and F. Joos, A stronger bound for the strong chromatic index, arXiv:1504.02583 [math.CO].

\bibitem{BPP}  M. Bonamy, T. Perrett, L. Postle, Colouring Graphs with Sparse Neighbourhoods: Bounds and Applications, submitted.

\bibitem{BQ} R. Brualdi and J. Quinn Massey, Incidence and strong edge colorings of graphs, Discrete Math. 122 (1993)  no. 1-3, 51--58.

\bibitem{CGTT} F. Chung, A. Gy\'{a}rf\'{a}s, Zs. Tuza, and W. Trotter, The maximum number of edges in $2K_2$-free graphs of bounded degree, Discrete Math. 81 (1990) no. 2, 129--135.

\bibitem{[Cr]} D. Cranston, Strong edge-coloring of graphs with maximum degree 4 using 22 colors, Discrete Math., 306 (2006) no. 21, 2772--2778.

\bibitem{[EN]} P. Erd\H{o}s and J. Ne\v{s}et\v{r}il, Irregularities of partitions, Ed. G. Hal\'{a}sz and VT S\'{o}s, (1989) 162--163.


\bibitem{FGST}  R. Faudree, R. Schelp, A. Gy\'{a}rf\'{a}s, and Zs. Tuza, The strong chromatic index of graphs, Ars Combin. 29B (1990) 205--211.

\bibitem{[FJ]}
J. Fouquet and J. Jolivet, Strong edge-colorings of graphs and applications to multi-k-gons, Ars Combin., 16A (1983) 141--150.


\bibitem{Hall} P. Hall, On representatives of subsetes, J. London Math. Soc. 10 (1935) 26--30.


\bibitem{HMRV}  H. Hocquard, M. Montassier, A. Raspaud, and P. Valicov, On strong edge-colouring of subcubic graphs, Discrete Appl. Math. 161 (2013) 2467--2479.


\bibitem{HV}  H. Hocquard and P. Valicov, Strong edge colouring of subcubic graphs, Discrete Appl. Math. 159 (2011) 1650--1657.

\bibitem{[Ho]}
P. Hor\'{a}k, The strong chromatic index of graphs with maximum degree four, Contemporary Methods in Graph Theory, (1990) 399--403.


\bibitem{[HQT]} P. Hor\'{a}k, H. Qing, and W. Trotter, Induced matchings in cubic graphs, J. of Graph Theory, 17 (1993) no. 2, 151--160.

\bibitem{HYZ}  M. Huang, G. Yu, X. Zhou,  The strong chromatic index of $(3,\Delta)$-bipartite graphs. Discrete Math. 340 (2017), no. 5, 1143--1149.

\bibitem{KLRSWY}  A. Kostochka, X. Li, W. Ruksasakchai, M. Santana, T. Wang, and G. Yu, The strong chromatic index of subcubic planar multigraphs is at most nine, European J. Combin. 51 (2016) 380--397.

\bibitem{MR}  M. Molloy and B. Reed, A bound on the strong chromatic index of a graph, J. Combin. Theory, Series B 69 (1997) 103--109.

\bibitem{SY}  A. Steger and M. Yu, On induced matchings, Discrete Math. 120 (1993) 291--295.


\bibitem{WL}  J. Wu and W. Lin, The strong chromatic index of a class of graphs, Discrete Math. 308 (2008) 6254--6261.

\bibitem{Yu} G. Yu, Strong edge-colorings for k-degenerate graphs,  Graphs and Comb., 31 (2015) 1815--1818.

\end{thebibliography}
\end{document}